\def\@settitle{\begin{center}%
    \bfseries
 \normalfont\LARGE\@title
  \end{center}%
}
\def\@setauthors{\begin{center}%
 \normalsize\@author
  \end{center}%
}
\numberwithin{equation}{section}
\renewcommand{\cal}{\mathcal}
\newcommand{\cC}{{\cal C}}
\newcommand{\cE}{{\cal E}}
\newcommand{\fa}{{\mathfrak a}}
\newcommand{\fb}{{\mathfrak b}}
\newcommand{\fc}{{\mathfrak c}}
\newcommand{\fd}{{\mathfrak d}}
\newcommand{\fK}{{\frak K}}
\newcommand{\rd}{{\rm d}}
\newcommand{\ri}{\mathrm{i}}
\newcommand{\bC}{{\mathbb C}}
\newcommand{\bE}{\mathbb{E}}
\newcommand{\bP}{\mathbb{P}}
\newcommand{\bR}{{\mathbb R}}
\newcommand{\bZ}{\mathbb{Z}}
\newcommand{\la}{\lambda}
\DeclareMathOperator{\supp}{supp}
\DeclareMathOperator{\dist}{dist}
\DeclareMathOperator{\dom}{\mathcal{D}}
\DeclareMathOperator{\OO}{O}
\DeclareMathOperator{\argmax}{argmax}
\DeclareMathOperator{\argmin}{argmin}
\renewcommand{\Re}{\mathop{\mathrm{Re}}}
\renewcommand{\Im}{\mathop{\mathrm{Im}}}
\newcommand{\deq}{\mathrel{\mathop:}=} 
\renewcommand{\leq}{\leqslant}
\renewcommand{\geq}{\geqslant}
\newcommand{\td}{\tilde}
\newcommand{\del}{\partial}
\newcommand{\rn}[1]{
       \romannumeral#1
}
\newcommand{\qq}[1]{[\![{#1}]\!]}
\newcommand{\beq}{\begin{equation}}
\newcommand{\eeq}{\end{equation}}
\theoremstyle{plain} 
\newtheorem{theorem}{Theorem}[section]
\newtheorem*{theorem*}{Theorem}
\newtheorem{lemma}[theorem]{Lemma}
\newtheorem*{lemma*}{Lemma}
\newtheorem{corollary}[theorem]{Corollary}
\newtheorem*{corollary*}{Corollary}
\newtheorem{proposition}[theorem]{Proposition}
\newtheorem*{proposition*}{Proposition}
\newtheorem{assumption}[theorem]{Assumption}
\newtheorem*{assumption*}{Assumption}
\newtheorem*{definition*}{Definition}
\newtheorem*{example*}{Example}
\newtheorem{remark}[theorem]{Remark}
\newtheorem*{remark*}{Remark}
\newtheorem*{remarks*}{Remarks}
\def\author#1{\par
    {\centering{\authorfont#1}\par\vspace*{0.05in}}
}
\def\titlefont{\fontsize{13}{15}\bfseries\boldmath\selectfont\centering{}}
\def\authorfont{\fontsize{13}{15}}
\let\affiliationfont\rhfont
\def\address#1{\par
    {\centering{\affiliationfont#1\par}}\par\vspace*{11pt}
}
\def\body{
\setcounter{footnote}{0}
\def\thefootnote{\alph{footnote}}
\def\@makefnmark{{$^{\rm \@thefnmark}$}}
}
\def\title#1{
    \thispagestyle{plain}
    \vspace*{-14pt}
    \vskip 79pt
    {\centering{\titlefont #1\par}}%
    \vskip 1em
}
\def\d{\mathrm{d}}
\def\epsilon{\varepsilon}
\def\i{\mathrm{i}}
\renewcommand\leq\le
\renewcommand\geq\ge
\begin{document}

\title{Rigidity and a mesoscopic central limit theorem for Dyson Brownian Motion for general $\beta$}

\vspace{1.2cm}

\noindent \begin{minipage}[c]{0.5\textwidth}
 \author{Jiaoyang Huang}
\address{Harvard University\\
   E-mail: jiaoyang@math.harvard.edu}
 \end{minipage}
 \begin{minipage}[c]{0.5\textwidth}
 \author{Benjamin Landon}
\address{Harvard University\\
   E-mail: landon@math.harvard.edu}
 \end{minipage}
 %
 %
 %
%
%


~\vspace{0.3cm}

\let\thefootnote\relax\footnote{\noindent The work of B.L. is partially supported by NSERC. } 

\begin{abstract}
We study Dyson Brownian motion with general potential $V$ and for general $\beta \geq 1$.  
For short times $t = o (1)$ and under suitable conditions on $V$ we obtain a local law and corresponding rigidity estimates on the particle locations; that is, with overwhelming probability, the particles are close to their classical locations with an almost-optimal error estimate.
Under the condition that the density of states of  the initial data is bounded below and above down to the scale $\eta_* \ll t \ll 1$, we prove a mesoscopic central limit theorem for linear statistics at all scales $N^{-1}\ll\eta\ll t$.
\end{abstract}

%



\section{Introduction}


In 1962, Dyson interpreted the $N\times N$ Gaussian ensemble (real, complex or quaternion) as the dynamical limit of matrix-valued Brownian motion $H(t)$, and observed that the eigenvalues of $H(t)$ form an interacting $N$-particle system with a logarithmic Coulomb interaction and quadratic potential.  That is, the eigenvalue process $\{ \lambda_i (t)\}_{1\leq i\leq N} $ satisfies the following system of stochastic differential equations with quadratic $V = x^2/2$ and classical $\beta =1, 2$ or $4$ (depending on the symmetry class of the Gaussian ensemble)
\beq \label{DBM}
\rd \la_i(t) = \sqrt{\frac{2}{\beta N}} \rd B_i(t) +\frac{1}{N}\sum_{j:j\neq i}\frac{\rd t}{\lambda_i(t)-\lambda_j(t)}-\frac{1}{2}V'(\lambda_i(t))\rd t,\quad i=1,2,\cdots, N,
\eeq
where $(B_1, \cdots, B_N)$ is an $N$-dimensional Brownian motion defined on a probability space with a filtration $\mathscr F=\{\mathscr F_t, t\geq 0\}$.  The initial data  ${\bm \la}(0)=(\la_1(0),\la_2(0)\cdots, \la_N(0))\in \overline{\Delta_N}$ is given by the eigenvalues of $H(0)$.  Here, $\Delta_N$ denotes the Weyl chamber
\beq
 \Delta_N=\{\{x_i\}_{1\leq i\leq N}\in \bR^N: x_1<x_2<\cdots <x_{N}\}.
\eeq
The process $\bm\la(t)=(\lambda_1(t),\lambda_2(t),\cdots, \lambda_N(t))$ defined by the stochastic differential equation system \eqref{DBM} is called the $\beta$-Dyson Brownian motion ($\beta$-DBM) with potential $V$, which is an interacting particle system with Hamiltonian of the form 
\beq
H(x_1,\cdots, x_N)\deq -\frac{1}{2N}\sum_{1\leq i\neq j\leq N}\log |x_i-x_j|+\frac{1}{2}\sum_{i=1}^{N}V(x_i).
\eeq
For the special case $\beta=2$ and $V=x^2/2$, at each fixed time $t$, the particles $\bm\la(t)$ have the same distribution as the eigenvalues of 
\beq
H(t)\stackrel{d}{=}e^{-t/2}H(0)+\sqrt{1-e^{-t}}G,
\eeq 
where $G$ is a matrix drawn from the Gaussian Unitary Ensemble (GUE).  The global eigenvalue density of the GUE follows Wigner's semi-circle distribution \cite{MR0083848}, and the local eigenvalue statistics are given by the Sine kernel\cite{MR0143556,MR0143557,MR0143558}.  Clearly, $H(t)\rightarrow G$ as $t\rightarrow \infty$ for any choice of the initial data $H(0)$, and so the system reaches a global equilibrium for $t \gg 1$.  One can also investigate the time to local equilibrium - that is, how long it takes for the local statistics to coincide with the GUE.  Dyson conjectured  \cite{MR0148397} that the time to local equilibrium should be much faster than the order $1$ global scale.   It is expected that in the bulk, an eigenvalue statistic on the scale $\eta$ should coincide with the GUE as long as $ t \gg \eta$.  To be more precise, one expects the convergence of the following three types of statistics on three types of scales.
\begin{enumerate}
\item On the \emph{macroscopic} scale, the global eigenvalue density should converge to Wigner's semi-circle distribution provided $t\gg 1$. 
\item The linear eigenvalue statistics of test functions on the \emph{mesoscopic} scale $N^{-1}\ll \eta\ll1$ should coincide with the GUE as long as $t \gg \eta$.
\item On the \emph{microscopic} scale $\OO(N^{-1})$, the local eigenvalue statistics should be given by the sine kernel as long as $t\gg N^{-1}$. 
\end{enumerate}

For the macroscopic scale, it was proven by Li, Li and Xie \cite{GDBM1, GDBM2}, that under mild conditions on $V$, the global eigenvalue density converges to a $V$-dependent equilibrium measure (which may not be the semicircle distribution for non-quadratic $V$) provided $t\gg 1$. We refer to  \cite{MR2760897} for a nice presentation on the dynamical approach to Wigner's semi-circle law.

The time to equilibrium at the microscopic scale was studied in a series of works \cite{MR2810797,MR3372074,MR2661171,MR2662426,MR3098073,MR2481753, MR2537522,MR2871147,MR2981427,MR2639734},  by Erd{\H{o}}s, Yau and their collaborators.  For classical $\beta =1, 2, 4$, quadratic $V$ and initial data a Wigner matrix, it was proven that after a short time $t \gg N^{-1}$ the local statistics coincide with the G$\beta$E.  Later, the works \cite{ly,kevin3} established single gap universality for classical DBM for a broad class of initial data, relying on the discrete di-Giorgi-Nash-Moser theorem developed in \cite{MR3372074}.  Fixed energy universality was established in \cite{MR3541852,fix2} by developing a sophisticated homogenization theory for discrete parabolic systems.  These results are a crucial component in proving bulk universality for various classes of random matrix ensembles.  Another approach to universality, applicable in special cases was developed independently and in parallel by Tao and Vu \cite{tv1}.

A central and basic tool in the study of the local statistics of random matrices is the local law and the associated rigidity estimates.  The local law is usually formulated in terms of concentration of the Stieltjes transform of the empirical eigenvalue density at short scales $\eta \gtrsim N^{-1}$.  Rigidity estimates give high probability concentration estimates for the eigenvalue locations.  These results were first established for Wigner matrices in a series of papers \cite{MR2481753, MR2537522,MR2871147,MR2981427,MR3109424}, then extended to other matrix models, i.e. sparse random matrices \cite{MR3098073}, deformed Wigner ensembles \cite{MR3502606, ly}. Beyond matrix models, rigidity estimates have been established for one-cut and multi-cut $\beta$-ensembles \cite{MR3192527, MR3253704,MR2905803, multicutRig}, and two-dimensional Couloub gas \cite{Roland2015, Leble2015}.

For the special case of classical $\beta =1, 2, 4$ and quadratic potential $V$, the solution of \eqref{DBM} is given by a matrix model and so the methods developed for deformed Wigner matrices \cite{MR3502606,kevin1,ly} yield a local law for the Stieltjes transform of empirical eigenvalue density,
\beq
\td m_t(z)\deq \frac{1}{N} \sum_{i=1}^{N} \frac{1}{\la_i(t)-z}.
\eeq
However for nonclassical $\beta$ or non-quadratic $V$, the process \eqref{DBM} is not given by a matrix model and so a corresponding local law is not known.

Our first main result is to establish a local law for the Stieltjes transform $\td m_t (z)$ for short scales and all short times $t \ll 1$.  This result is stated as Theorem \ref{t:rigidity} below.  This implies a rigidity estimate for the particle locations $\lambda_i(t)$, i.e., that they are close to deterministic classical locations with high probability.  

Our methods are purely dynamical and do not rely on any matrix representation.  Instead, our method is based on analyzing the stochastic differential equation of the Stieltjes transform $\td m_t$ along the characteristics of the limiting continuum equation.  We remark that since the $\beta$-ensemble is the equilibrium measure of $\beta$-DBM, our results may be used to provide another proof for the rigidity of $\beta$-ensemble in the case $\beta\geq 1$, provided that one takes some large deviation estimates (such as \cite{deviation}) as input.  We also comment that the method of characteristics has recently been used, independently and in parallel, for the analysis of a  different equation in \cite{extremegap}. 

Relying on our local law we then prove a mesoscopic central limit theorem for linear statistics of the particle process on scales $\eta \ll t$.  This is stated as Theorem \ref{t:mesoCLT} below.  In particular we see that equilibrium holds for the process \eqref{DBM} on mesoscopic scales $\eta \ll t$.  Central limit theorems for mesoscopic linear statistics of Wigner matrices at all scales were established in a series of papers \cite{MR1678012,MR1689027, mesoCLT1,mesoCLT3}. Analogous results for invariant ensembles were proved in \cite{mesoCLT4,mesoCLT2}. Mesoscopic statistics for DBM with $\beta=2$ and quadratic potential was established in \cite{mesoCLTDBM}. It was proven that at mesoscopic scale $\eta$, the mesoscopic central limit theorem holds if and only if $t\gg \eta$. Recently, related results were proven for classical $\beta$ and the quadratic potential in \cite{fix2}. The analysis in \cite{mesoCLTDBM} relied on the Br\'ezin-Hikami formula special to the $\beta=2$ case, and the analysis in \cite{fix2} relied on the matrix model which exists only for classical $\beta$, i.e. $\beta=1,2,4$, neither of which are applicable here. Our approach is based on a direct analysis of the stochastic differential equation of $\td m_t$, where the leading fluctuation term is an integral with respect to Brownian motions.  The central limit theorem follows naturally for all $\beta\geq 1$ and general potential $V$.  

Finally we remark that by combining the rigidity results proven here and the methodology of \cite{fix2} one can prove gap universality for the process \eqref{DBM}, thus yielding equilibrium on the local scale $\eta =1/ N$.

We now outline the organization of the rest of the paper.   In Section \ref{s:background}, we collect some properties of $\beta$-DBM \eqref{DBM}, i.e., the existence and uniqueness of strong solutions and the existence and uniqueness of the hydrodynamic limit of the empirical density $\td \mu_t$, which is a measure valued process $\mu_t$. For quadratic $V$, these statements were proved by Chan \cite{MR1176727} and Rogers and Shi \cite{MR1217451}. For general potentials (under Assumption \ref{a:asumpV} below), the $\beta$-DBM was studied by Li, Li and Xie \cite{GDBM1, GDBM2}. 
In the second part of Section \ref{s:background}, we study the Stieltjes transform of the limit measure valued process $\mu_t$ by the method of characteristics, which are used throughout the rest of the paper.

Section \ref{s:rigidity} contains the main novelty of this paper, in which we prove the local law and rigidity estimate of the particles Theorem \ref{t:rigidity}.  We directly analyze the stochastic differential equation satisfied by $\td m_t$ using the method of characteristics.   In Section \ref{s:mesoCLT}, we prove that the linear statistics satisfy a central limit theorem at  mesoscopic scales.

In the rest of this paper, we use $C$ to represent large universal constant, and $c$ a small universal constant, which may depend on other universal constants, i.e., the constants $\fa, \fb, \fK$ in Assumptions \ref{a:asumpV} and \ref{a:asumpL}, and may be different from line by line. We write that $X=O(Y)$ if there exists some universal constant such that $|X|\leq CY$. We write $X=o(Y)$, or $X\ll Y$ if the ratio $|X|/Y\rightarrow 0$ as $N$ goes to infinity. We write $X\asymp Y$ if there exist universal constants such that $cY\leq |X|\leq CY$. We denote the set $\{1, 2,\cdots, N\}$ by $\qq{1,N}$. We say an event $\Omega$ holds with overwhelming probability, if for any $D>0$, and $N\geq N_0(D)$ large enough, $\bP(\Omega)\geq 1-N^{-D}$.

\noindent \textbf{Acknolwedgements:} We thank Paul Bourgade, Philippe Sosoe and Horng-Tzer Yau for helpful discussions and useful comments on our preliminary draft.

\section{Background on $\beta$-Dyson Brownian Motion }\label{s:background}
In this section we collect several properties of $\beta$-DBM, required in the remainder of the paper. More precisely, we state the existence and uniqueness of the strong solution to \eqref{DBM} and a weak convergence result for the empirical particle density.

In the rest of the paper, we make the following assumption on the potential $V$.
\begin{assumption}\label{a:asumpV}
We assume that the potential $V$ is a $C^4$ function, and that there exists a constant $\fK\geq 0$ such that $\inf_{x\in \bR} V''(x)\geq -2\fK$. 
\end{assumption}

We denote $M_1(\bR)$ the space of probability measures on $\bR$ and equip this space with the weak topology.  For $T>0$ we denote by $C([0,T], M_1(\bR))$ the space of continuous processes on $[0,T]$ taking values in $M_1(\bR)$.  We have the following existence result from \cite{GDBM1}.

\begin{theorem}
Suppose that $V$ satisfies Assumption \ref{a:asumpV}. For all $\beta\geq 1$ and initial data $\bm\la(0)\in \overline{\Delta_N}$, there exists a strong solution $(\bm \la(t))_{t\geq0}\in C(\bR_+,\overline{\Delta_N})$ to the stochastic differential equation \eqref{DBM}. For any $t>0$, $\bm\la(t)\in \Delta_N$ and $\bm \la(t)$ is a continuous function of $\bm\la(0)$.
\end{theorem}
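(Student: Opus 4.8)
The plan is to follow the classical construction of Dyson Brownian motion (Rogers--Shi, C\'epa--L\'epingle, Anderson--Guionnet--Zeitouni), adapted to a general one-sided Lipschitz potential. Write the drift of \eqref{DBM} as $b_i(\bm x)=\tfrac1N\sum_{j\ne i}\tfrac1{x_i-x_j}-\tfrac12 V'(x_i)$ and observe that $b=-\nabla\cH$ on $\Delta_N$, where
\[
\cH(\bm x)=-\frac1{2N}\sum_{i\ne j}\log|x_i-x_j|+\frac12\sum_{i=1}^N V(x_i),
\]
so that \eqref{DBM} is a Langevin diffusion for $\cH$ driven by additive noise of size $\sqrt{2/(\beta N)}$; the constancy of the diffusion coefficient will be convenient. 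First I would produce a local-in-time solution: replace $1/x$ in the interaction by a globally Lipschitz odd function agreeing with it on $\{|x|\ge\delta\}$, so that, together with the elementary bound $xV'(x)\ge-C(1+x^2)$ coming from $V''\ge-2\fK$, the regularized drift is locally Lipschitz with at most linear growth; the regularized SDE then has a unique strong solution $\bm\lambda^{(\delta)}$. Letting $\tau_\delta$ be the first time two coordinates of $\bm\lambda^{(\delta)}$ come within $\delta$ or one coordinate leaves $[-\delta^{-1},\delta^{-1}]$, these processes coincide with a solution of \eqref{DBM} on $[0,\tau_\delta)$ and are consistent in $\delta$, defining a solution $\bm\lambda$ on $[0,\tau)$ with $\tau=\lim_{\delta\to0}\tau_\delta$, taking values in $\Delta_N$ and staying ordered by continuity.

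The core step is to show $\tau=\infty$ almost surely, which is exactly where $\beta\ge1$ is used. A Gronwall estimate on $\sum_i\lambda_i(t)^2$, again using $xV'(x)\ge-C(1+x^2)$, rules out escape to infinity in finite time and, on an event of probability tending to one, confines the particles to a fixed box on which $V$ may be replaced by a function with globally bounded second derivative; this is where Assumption \ref{a:asumpV} enters. To rule out collisions I would apply It\^o's formula to $\cH(\bm\lambda(t))$ on the intervals where it is smooth, using $\rd\langle\lambda_i,\lambda_j\rangle_t=\tfrac2{\beta N}\delta_{ij}\rd t$, to obtain
\[
\rd\cH(\bm\lambda(t))=-\bigl|\nabla\cH(\bm\lambda(t))\bigr|^2\rd t+\frac1{\beta N}\Delta\cH(\bm\lambda(t))\,\rd t+\rd M_t ,
\]
with $M$ a local martingale. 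Expanding $|\nabla\cH|^2$ and invoking the identity $\sum_i\sum_{j,k\ne i,\ j\ne k}\tfrac1{(x_i-x_j)(x_i-x_k)}=0$, the singular part of $-|\nabla\cH|^2+\tfrac1{\beta N}\Delta\cH$ collapses to $\bigl(\tfrac1\beta-1\bigr)\tfrac1{N^2}\sum_{i\ne j}\tfrac1{(\lambda_i-\lambda_j)^2}$, which is nonpositive precisely because $\beta\ge1$, while the potential contributions $-\tfrac14\sum_iV'(\lambda_i)^2$, $\tfrac1{2\beta N}\sum_iV''(\lambda_i)$ and $\tfrac1N\sum_{i<j}\tfrac{V'(\lambda_i)-V'(\lambda_j)}{\lambda_i-\lambda_j}$ are $O(1)$ on the box by the bounded-second-derivative reduction. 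Hence $\rd\cH(\bm\lambda(t))\le\rd M_t+C\,\rd t$ up to the localizing times, and since $\cH\to+\infty$ as $\bm x$ approaches $\partial\Delta_N$, a Gronwall-plus-Markov argument gives $\bP(\tau\le T)=0$ for every $T$. In particular $\bm\lambda(t)\in\Delta_N$ for all $t>0$ when $\bm\lambda(0)\in\Delta_N$.

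For uniqueness and continuity in the initial data I would exploit that the noise in \eqref{DBM} is additive, so for two solutions $\bm\lambda,\bm\lambda'$ driven by the same Brownian motion the difference has finite variation and
\[
\frac{\rd}{\rd t}\bigl|\bm\lambda(t)-\bm\lambda'(t)\bigr|^2=2\scalarb{\bm\lambda(t)-\bm\lambda'(t)}{b(\bm\lambda(t))-b(\bm\lambda'(t))}\le 2\fK\bigl|\bm\lambda(t)-\bm\lambda'(t)\bigr|^2 ,
\]
because $-\nabla$ of the convex log-interaction is monotone on $\Delta_N$ and $-\tfrac12 V'$ is $\fK$-one-sided Lipschitz by Assumption \ref{a:asumpV}. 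Thus $|\bm\lambda(t)-\bm\lambda'(t)|^2\le e^{2\fK t}|\bm\lambda(0)-\bm\lambda'(0)|^2$ pathwise, which gives pathwise uniqueness and Lipschitz dependence of $\bm\lambda(t)$ on $\bm\lambda(0)\in\overline{\Delta_N}$. The case $\bm\lambda(0)\in\overline{\Delta_N}\setminus\Delta_N$ is then handled by approximating with interior initial data, passing to the limit via this contraction, and invoking the instantaneous-separation property of the dynamics (as in the quadratic case) to conclude $\bm\lambda(t)\in\Delta_N$ for $t>0$.

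I expect the genuinely hard part to be the non-collision bound for $\beta\ge1$: the two delicate points are (i) the exact cancellation of the $\tfrac1{(x_i-x_j)(x_i-x_k)}$ terms that exposes the favourable sign of the $(\tfrac1\beta-1)$ coefficient, and (ii) controlling the potential terms in the It\^o expansion under only Assumption \ref{a:asumpV}, i.e.\ making the box-reduction quantitative and uniform in the localizing stopping times; the remaining steps are soft.
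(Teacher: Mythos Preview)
Your proposal is correct and essentially reproduces the classical Rogers--Shi/AGZ construction in full detail. The paper, by contrast, takes a much more economical route: for initial data in the open chamber $\Delta_N$ it simply cites the existence result of Li--Li--Xie \cite{GDBM1}, and then extends to $\overline{\Delta_N}$ via an $\ell^\infty$ comparison lemma (a maximum-principle argument): if $\lambda_i(0)>\eta_i(0)$ for all $i$, then $0\le\lambda_i(t)-\eta_i(t)\le e^{\fK t}\max_j\{\lambda_j(0)-\eta_j(0)\}$, proved by tracking the index where $\lambda-\eta$ is extremal and using $V''\ge-2\fK$. This comparison simultaneously gives continuity in the initial data and the approximation from the interior.

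So the two proofs diverge at two points. First, you prove non-collision from scratch via the Lyapunov function $\cH$ and the $(\tfrac1\beta-1)$ sign, whereas the paper outsources this entirely to \cite{GDBM1}. Second, for uniqueness and continuous dependence you use the $\ell^2$ one-sided Lipschitz bound coming from convexity of the log interaction, while the paper uses the $\ell^\infty$ monotone comparison. Your route is more self-contained and yields an explicit Lipschitz constant in Euclidean norm; the paper's route is shorter and, because its comparison preserves coordinatewise ordering, feeds directly into the sandwich argument for boundary initial data (as in \cite[Proposition~4.3.5]{MR2760897}). One minor correction: in your It\^o expansion the potential cross-term $\tfrac1N\sum_{i<j}\tfrac{V'(\lambda_i)-V'(\lambda_j)}{\lambda_i-\lambda_j}$ is $O(N)$, not $O(1)$, on the box---but this is still a finite bound and the non-collision argument goes through unchanged.
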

\begin{proof}
The existence of strong solution with initial data $\bm\la(0)\in \Delta_N$ follows from \cite[Theorem 1.2]{GDBM1}. Following the same argument in \cite[Proposition 4.3.5]{MR2760897}, we can extend the statement to $\bm\la(0)\in \overline{\Delta_N}$ by the following comparison lemma (the special case with potential $V\equiv0$ is proved in \cite[Lemma 4.3.6]{MR2760897} and the proof below is based on the proof given there) between strong solutions of \eqref{DBM} with initial data in $\Delta_N$. 
\end{proof}

\begin{lemma}
Suppose that $V$ satisfies the Assumption \ref{a:asumpV}. Let $(\bm\la(t))_{t\geq0}$ and $(\bm \eta(t))_{t\geq 0}$ be two strong solutions of \eqref{DBM} with initial data $\bm\la(0)\in \Delta_N$ and $\bm\eta(0)\in \Delta_N$. Assume that $\la_i(0)>\eta_i(0)$ for all $i\in\qq{1,N}$. Then, almost surely, for all $t\geq 0$ and $i\in\qq{1,N}$,
\beq
 0\leq \la_i(t)-\eta_i(t)\leq e^{{\fK}t}\max_{j\in\qq{1,N}}\{\la_j(0)-\eta_j(0)\}.
\eeq
\end{lemma}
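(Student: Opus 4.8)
The plan is to run a standard comparison / Gronwall argument for the coupled system of SDEs driven by the \emph{same} Brownian motions $(B_1,\dots,B_N)$. Set $u_i(t) \deq \la_i(t) - \eta_i(t)$. Since both processes are driven by the same noise, the martingale parts cancel in the difference, and $u_i$ solves an ODE (pathwise):
\beq \label{eq:diff}
\rd u_i(t) = \frac{1}{N}\sum_{j:j\neq i}\pB{\frac{1}{\la_i(t)-\la_j(t)} - \frac{1}{\eta_i(t)-\eta_j(t)}}\rd t - \frac12\pb{V'(\la_i(t)) - V'(\eta_i(t))}\rd t.
\eeq
First I would establish the lower bound $u_i(t)\geq 0$, i.e. that the ordering $\la_i(t) > \eta_i(t)$ is preserved. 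This is the delicate point and I treat it below. Granting it, I would then prove the upper bound.

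\textbf{Upper bound via the maximal coordinate.} Assume $u_i(t)\geq 0$ for all $i,t$. Fix $T>0$; by continuity of the paths on $[0,T]$ one can work with a smooth-enough approximation or argue directly that $t\mapsto \max_i u_i(t)$ is locally Lipschitz, hence differentiable a.e., and at a.e.\ $t$ its derivative equals $\rd u_{k}(t)$ for an index $k=k(t)$ achieving the max (an envelope/Danskin-type argument). At such a $t$ and such $k$: for each $j\neq k$ we have $u_k(t)\geq u_j(t)$, i.e. $\la_k - \la_j \geq \eta_k - \eta_j$; since both $\la_k-\la_j$ and $\eta_k-\eta_j$ have the same sign (orders are preserved) and $x\mapsto 1/x$ is decreasing on each half-line, $\tfrac{1}{\la_k-\la_j} - \tfrac{1}{\eta_k-\eta_j}\leq 0$. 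So the entire interaction sum in \eqref{eq:diff} is $\leq 0$ when $i=k$. For the potential term, write $V'(\la_k)-V'(\eta_k) = \big(\int_0^1 V''(\eta_k + s u_k)\,\rd s\big)u_k \geq -2\fK u_k$ using Assumption \ref{a:asumpV} and $u_k\geq 0$; hence $-\tfrac12(V'(\la_k)-V'(\eta_k)) \leq \fK u_k$. Therefore, for a.e.\ $t$,
\beq
\frac{\rd}{\rd t}\Big(\max_i u_i(t)\Big) \leq \fK \max_i u_i(t),
\eeq
and Gronwall's inequality gives $\max_i u_i(t) \leq e^{\fK t}\max_j u_j(0)$, which is the claimed bound. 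Combined with $u_i(t)\geq 0$ this finishes the proof.

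\textbf{The main obstacle: preservation of the ordering $\la_i > \eta_i$.} This is where the logarithmic singularity is both an obstruction and a help. The clean way is to note that a.s.\ the particles never collide: for $t>0$ both $\bm\la(t),\bm\eta(t)\in\Delta_N$ (by the existence theorem quoted above), so all gaps $\la_i-\la_{i-1}$, $\eta_i-\eta_{i-1}$ are strictly positive on $(0,T]$, and the drift in \eqref{DBM} is locally Lipschitz away from the collision set. Suppose for contradiction that $\tau \deq \inf\{t>0 : u_i(t) = 0 \text{ for some } i\}$ satisfies $\tau < \infty$; at $\tau$ we have $u_i(\tau)=0$ for some $i$ while $u_j(\tau)\geq 0$ for all $j$, and one can take $i$ so that $u_i$ is minimal, $u_i(\tau)=0=\min_j u_j(\tau)$. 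Running the same envelope argument on $\min_j u_j$ from slightly before $\tau$: for the minimizing index $k$ the interaction sum is now $\geq 0$ (reverse inequality, same monotonicity of $1/x$), and $-\tfrac12(V'(\la_k)-V'(\eta_k))\geq -\fK|u_k|$, giving $\frac{\rd}{\rd t}\min_j u_j \geq -\fK\,|\min_j u_j|$ near $\tau$; since $\min_j u_j$ was strictly positive just before $\tau$, Gronwall (applied backward, or a direct comparison with $ce^{-\fK t}$) shows it cannot reach $0$ in finite time — contradiction. The subtlety to handle carefully is that near a potential crossing $\la_k - \la_j$ and $\eta_k - \eta_j$ could have different signs for some $j$; but this is exactly prevented by noting the crossing index pairs are consecutive in the ordering and using that the repulsion term for consecutive particles blows up, keeping both orderings intact — equivalently one restricts attention to the first time \emph{any} $u_i$ hits $0$ and at that time all same-sign comparisons are still valid. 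I would carry this out by a stopping-time argument, possibly first proving the statement with $V\equiv 0$ (as in \cite[Lemma 4.3.6]{MR2760897}) and then incorporating the $-2\fK$ lower bound on $V''$ as a routine perturbation, exactly as the paper's parenthetical remark suggests.
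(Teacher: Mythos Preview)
Your overall strategy matches the paper's: subtract the two SDEs so the noise cancels, examine the extremal coordinate of $u_i=\la_i-\eta_i$, and apply Gronwall. The upper-bound half is the same once you note (as the paper does) that the interaction difference rewrites as $\frac{u_j-u_k}{(\la_k-\la_j)(\eta_k-\eta_j)}$, nonpositive at the maximizing index since both processes remain in $\Delta_N$.

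Where you diverge is the lower bound, and there is a slip. Your inequality $-\tfrac12(V'(\la_k)-V'(\eta_k))\geq -\fK|u_k|$ at the minimizing index with $u_k>0$ amounts to $V''(\xi)\leq 2\fK$, an \emph{upper} bound on $V''$; Assumption~\ref{a:asumpV} only supplies the lower bound $V''\geq -2\fK$. (Your argument can be rescued with a path-dependent constant, since the continuous solutions are bounded on $[0,T]$ and $V''$ is continuous, but that is not what you invoked, and it is not the constant $\fK$.)

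The paper sidesteps this entirely, and the stopping-time detour is unnecessary: the two bounds are proved independently and symmetrically via positive parts. For the upper bound, at the maximizing $i_0$ either $u_{i_0}<0$, or (using $V''\geq -2\fK$ together with $u_{i_0}\geq 0$) one has $\del_t u_{i_0}\leq \fK u_{i_0}$; either way $\del_t(u_{i_0})_+\leq \fK(u_{i_0})_+$, and Gronwall finishes. For the lower bound, at the minimizing $i_0$ either $u_{i_0}>0$, or (using the \emph{same} lower bound on $V''$, now with $u_{i_0}\leq 0$, which flips the inequality in the right direction) one gets $\del_t u_{i_0}\geq \fK u_{i_0}$; hence $\del_t(-u_{i_0})_+\leq \fK(-u_{i_0})_+$, and since $(-u_{i_0}(0))_+=0$ Gronwall gives $\min_i u_i(t)\geq 0$. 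The single hypothesis $V''\geq -2\fK$ is exactly what is needed in both directions once you split on the sign of $u_{i_0}$, so the positivity you flagged as the ``main obstacle'' is in fact no harder than the upper bound.
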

\begin{proof}
By taking difference of the stochastic differential equations satisfied by $(\bm \la(t))_{t\geq 0}$ and $(\bm \eta(t))_{t\geq 0}$, we have
\beq\label{e:diffDBM1}
\del_t(\la_i(t)-\eta_i(t))=\frac{1}{N}\sum_{j:j\neq i}\frac{(\la_j(t)-\eta_j(t))-(\la_i(t)-\eta_i(t))}{(\la_i(t)-\la_j(t))(\eta_i(t)-\eta_j(t))}\rd t -\frac{1}{2}\left(V'(\la_i(t))-V'(\eta_i(t))\right) \rd t.
\eeq

Let $i_0=\argmax_{i\in\qq{N}}\{\la_i(t)-\eta_i(t)\}$.  For $i=i_0$, the first term of \eqref{e:diffDBM1} is non-positive, and 
\beq
\del_t(\la_{i_0}(t)-\eta_{i_0}(t))\leq -\frac{1}{2}\left(V'(\la_{i_0}(t))-V'(\eta_{i_0}(t))\right).
\eeq
Either $\la_{i_0}(t)-\eta_{i_0}(t)< 0$, or using  Assumption \ref{a:asumpV} the above equation implies
$\del_t(\la_{i_0}(t)-\eta_{i_0}(t))\leq {\fK}(\la_{i_0}(t)-\eta_{i_0}(t))$. 
Hence,
\beq
\del_t ( \la_{i_0 }  (t) - \eta_{i_0} (t) )_+ \leq \fK ( \la_{i_0} (t) - \eta_{i_0} (t) )_+.
\eeq
Therefore, it follows from Gronwall's inequality,
\beq
\max_{i\in\qq{N}}\{\la_i(t)-\eta_i(t)\}\leq e^{{\fK}t}\max_{i\in\qq{N}}\{\la_i(0)-\eta_i(0)\}.
\eeq
Similarly, let $i_0=\argmin_{i\in\qq{N}}\{\la_i(t)-\eta_i(t)\}$.  Either $\la_{i_0}(t)-\eta_{i_0}(t)> 0$, or
$\del_t(\la_{i_0}(t)-\eta_{i_0}(t))\geq {\fK}(\la_{i_0}(t)-\eta_{i_0}(t))$. Again by Gronwall's inequality we obtain that $\min_{i\in\qq{N}}\{\la_i(t)-\eta_i(t)\}\geq 0$.
\end{proof}

The following theorem is a consequence of  \cite[Theorem 1.1 and 1.3]{GDBM1}.  It establishes the existence of a solution to the limiting hydrodynamic equation of the empirical particle process.  In its statement we distinguish the parameter $L$ from $N$.  This is due to the fact that we will compare the empirical measure  $\td \mu_t$ to a solution of the equation \eqref{e:MVeq} with initial data coming from the initial value of $\bm \la (0)$ which is a finite $N$ object.  The existence of this solution is  easily established using the theorem below by introducing  an auxilliary process $\bm \la^{(L)}$ which converges to $\td \mu_0$ (a fixed finite $N$ object) as $L \to \infty$.
\begin{theorem}\label{thm:limitm}
Suppose $V$ satisfies the Assumption \ref{a:asumpV}.  Let $\beta\geq 1$.  Let  $\bm \la^{(L)}(0)=(\la_1^{(L)}(0),\la_2^{(L)}(0)\cdots, \la_L^{(L)}(0))\in \overline{\Delta_N}$ be a sequence of initial data satisfying
\beq
\sup_{L>0}\frac{1}{L}\sum_{i=1}^L\log(\la_i^{(L)}(0)^2+1)<\infty.
\eeq 
Assume that the empirical measure
$\td\mu^{(L)}_0=\frac{1}{L}\sum_{i=1}^L\delta_{\la_i^{(L)}(0)}$ converges weakly as $L$ goes to infinity to $\mu_0\in M_1(\bR)$.

Let ${\bm \la}^{(L)}(t)=(\la^{(L)}_1(t),\cdots, \la^{(L)}_L(t))_{t\geq 0}$ be the solution of \eqref{DBM} with initial data $\bm \la^{(L)}(0)$, and set 
\beq
\td\mu_t^{(L)}=\frac{1}{L}\sum_{i=1}^L\delta_{\la_i^{(L)}(t)}.
\eeq
Then for any fixed time $T$, $(\td \mu_t^{(L)})_{t\in[0,T]}$ converges almost surely in $C([0,T], M_1(\bR))$. Its limit is the unique measure-valued process $(\mu_t)_{t\in [0,T]}$ characterized by the  McKean-Vlasov equation, i.e., for all $f\in C_b^2(\bR)$, $t\in [0,T]$, 
\beq\label{e:MVeq}
\del_t \int_\bR f(x)\rd \mu_t(x) =\frac{1}{2}\int\int_{\bR^2}\frac{\del_x f(x)-\del_yf(y)}{x-y}\rd \mu_t(x)\rd \mu_t(y)-\frac{1}{2}\int_\bR V'(x)f'(x)\rd \mu_t(x).
\eeq
\end{theorem}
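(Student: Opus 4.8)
The plan is to run the classical tightness-plus-martingale-identification scheme for deriving McKean--Vlasov limits, tailored to the logarithmic interaction, and to extract uniqueness from the convexity of the underlying energy functional. Write $\langle\nu,f\rangle\deq\int_\bR f\,\rd\nu$. The starting point is Itô's formula for $\langle\td\mu_t^{(L)},f\rangle=\frac1L\sum_i f(\la_i^{(L)}(t))$ with $f\in C_b^2(\bR)$: using \eqref{DBM} (with $L$ particles) and symmetrizing $\frac1{L^2}\sum_{i\neq j}\frac{f'(\la_i)}{\la_i-\la_j}=\frac1{2L^2}\sum_{i\neq j}\frac{f'(\la_i)-f'(\la_j)}{\la_i-\la_j}$, one gets
\beq
\langle\td\mu_t^{(L)},f\rangle=\langle\td\mu_0^{(L)},f\rangle+\frac12\int_0^t\iint_{\bR^2}\frac{f'(x)-f'(y)}{x-y}\,\rd\td\mu_s^{(L)}(x)\,\rd\td\mu_s^{(L)}(y)\,\rd s-\frac12\int_0^t\langle\td\mu_s^{(L)},V'f'\rangle\,\rd s+M_t^{(L)}+\OO\!\left(\frac{t\norm{f''}_\infty}{L}\right),
\eeq
where the error absorbs the diagonal correction $-\frac1{2L}\langle\td\mu_s^{(L)},f''\rangle$ and the Itô second-order term $\frac1{\beta L}\langle\td\mu_s^{(L)},f''\rangle$, and $M_t^{(L)}$ is a continuous martingale with $\langle M^{(L)}\rangle_t=\frac{2}{\beta L^2}\int_0^t\langle\td\mu_s^{(L)},(f')^2\rangle\,\rd s=\OO(t\norm{f'}_\infty^2/L^2)$. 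The point that makes the singular kernel harmless is that $(x,y)\mapsto\frac{f'(x)-f'(y)}{x-y}$ extends continuously to the diagonal with sup-bound $\norm{f''}_\infty$.

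First I would prove \emph{tightness} of $(\td\mu^{(L)})_L$ in $C([0,T],M_1(\bR))$. Starting from the uniform log-moment bound on $\td\mu_0^{(L)}$ and applying Itô's formula to (a smooth truncation of) $x\mapsto\log(1+x^2)$ — where Assumption~\ref{a:asumpV} is used to bound the drift contribution of $-\tfrac12 V'$, which over a finite horizon can enlarge the support only at an exponential rate — one obtains $\sup_L\bE\sup_{s\le T}\langle\td\mu_s^{(L)},\log(1+x^2)\rangle<\infty$, hence tightness of the time-marginals. Equicontinuity in time of $t\mapsto\langle\td\mu_t^{(L)},f\rangle$ for each fixed bounded $C^2$ test function $f$ follows from the displayed identity (bounded integrand in the interaction term, and $M^{(L)}$ plus the error term small), and a standard criterion over a countable convergence-determining family of $f$ then gives tightness of the processes.

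Next comes \emph{identification of limit points}: along any subsequence $\td\mu^{(L)}\to\mu$ in distribution, and passing to the limit in the displayed identity — $\langle\td\mu_0^{(L)},f\rangle\to\langle\mu_0,f\rangle$ by hypothesis, $M^{(L)}$ and the error vanish (the former by its quadratic-variation bound), and the interaction term converges since its kernel is bounded and continuous while $\td\mu^{(L)}\otimes\td\mu^{(L)}\to\mu\otimes\mu$ — one sees that $(\mu_t)$ solves \eqref{e:MVeq} with initial condition $\mu_0$. For \emph{uniqueness} of \eqref{e:MVeq}: since $x\mapsto(x-z)^{-1}$ and its $x$-derivatives are bounded for $\Im z\neq0$, one may take $f(x)=(x-z)^{-1}$ in \eqref{e:MVeq} to find that $m_t(z)=\langle\mu_t,(\cdot-z)^{-1}\rangle$ solves $\partial_t m_t=m_t\,\partial_z m_t+\frac12\langle\mu_t,V'(\cdot)(\cdot-z)^{-2}\rangle$, and uniqueness follows by a Gronwall argument for the difference of two solutions; conceptually, \eqref{e:MVeq} is the Wasserstein gradient flow of $\mathcal{E}(\mu)=-\frac12\iint\log|x-y|\,\rd\mu\,\rd\mu+\frac12\int V\,\rd\mu$, which is $(-2\fK)$-displacement convex by Assumption~\ref{a:asumpV}, so $W_2(\mu_t,\nu_t)\le e^{2\fK t}W_2(\mu_0,\nu_0)$. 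Tightness together with a unique, deterministic limit point gives convergence in probability; the upgrade to almost sure convergence in $C([0,T],M_1(\bR))$ is obtained by applying Burkholder--Davis--Gundy to $M^{(L)}$ and to the $\log(1+x^2)$-moment martingale, then Borel--Cantelli, so that almost surely $\sup_{t\le T}|M_t^{(L)}|\to0$ and the paths $(\td\mu^{(L)})$ are precompact, whence every almost-sure subsequential limit solves \eqref{e:MVeq} and therefore equals $\mu$.

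The step I expect to be the main obstacle is the uniqueness of \eqref{e:MVeq} for \emph{general} $V$: for quadratic (or vanishing) $V$ the Stieltjes-transform equation is the complex Burgers equation and is integrable along characteristics, whereas for general $V$ the nonlocal term $\frac12\langle\mu_t,V'(\cdot)(\cdot-z)^{-2}\rangle$ is not a closed function of $m_t$, so one is forced either to invoke the gradient-flow/displacement-convexity machinery (which is precisely where $\inf V''\ge-2\fK$ enters, and which also requires verifying enough regularity of solutions) or to run a delicate Gronwall estimate controlling this error term. A secondary nuisance is carrying out the moment and tightness bounds under the weak hypotheses on the initial data (only a uniform logarithmic moment), which forces working with $\log(1+x^2)$ rather than polynomial weights throughout.
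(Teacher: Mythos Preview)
The paper does not prove this theorem at all: it is stated as a direct consequence of \cite[Theorem 1.1 and 1.3]{GDBM1} (Li--Li--Xie), with no argument given. So there is no ``paper's own proof'' to compare against; your proposal is a from-scratch sketch of a result the authors simply quote.

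As a sketch, your outline follows the standard tightness/identification/uniqueness scheme and is broadly sound. A few remarks. First, in the identification step you pass to the limit in $\int_0^t\langle\td\mu_s^{(L)},V'f'\rangle\,\rd s$ as though $V'f'$ were a bounded continuous test function; under Assumption~\ref{a:asumpV} alone $V'$ need not be bounded (e.g.\ $V(x)=x^2/2$), so for $f\in C_b^2$ this term is not obviously controlled by weak convergence plus a logarithmic moment. One has to either restrict first to compactly supported $f$ and then approximate, or upgrade the moment bound. Second, your first proposed uniqueness argument (Gronwall on the Stieltjes transform) does not work as stated, for exactly the reason you yourself flag two lines later: the equation $\partial_t m_t=m_t\partial_z m_t+\frac12\langle\mu_t,V'(\cdot)(\cdot-z)^{-2}\rangle$ is not closed in $m_t$, so there is nothing to Gronwall. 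The displacement-convexity route you mention is the one that actually goes through, and this is essentially what \cite{GDBM1} does; your identification of this as the main obstacle is accurate.
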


Taking $f(x)=(x-z)^{-1}$ for $z\in \bC\setminus \bR$ in \eqref{e:MVeq}, we see that the Stieltjes transform of the limiting measure-valued process, which is defined by
\beq
m_t(z)=\int (x-z)^{-1}\rd\mu_t(x) ,
\eeq
satisfies the equation
\beq \label{e:limitm0}
\del_t m_t(z)=m_t(z)\del_z m_t(z)+\frac{1}{2}\int_{\bR}\frac{V'(x)}{(x-z)^2}\rd \mu_t(x).
\eeq

In a moment we will introduce a spatial cut-off of $V$.  In order to do this we require the following exponential bound for $ || \lambda_i (t) ||_\infty$.

\begin{proposition}\label{normbound}
Suppose $V$ satisfies Assumption \ref{a:asumpV}. Let $\beta\geq 1$, and $\bm \la(0)\in \overline{\Delta_N}$. Let $\fa$ be a constant  such that the initial data $\|\bm \la(0)\|_\infty\leq \fa$.  Then for any fixed time $T$, there exists a finite constant $\fb=\fb(\fa, T )$, such that for any $0\leq t\leq T$, the unique strong solution of \eqref{DBM} satisfies:
\beq\label{e:normbound}
\bP(\max\{|\la_1(t)|,|\la_N(t)|\}\geq \fb)\leq e^{-N}.
\eeq
\end{proposition}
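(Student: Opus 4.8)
The plan is to control the largest and smallest particles via a stochastic comparison argument, reducing to a single-particle SDE that can be handled by Gronwall and a Gaussian tail bound. First I would deal with the Coulomb interaction term: for the top particle $\lambda_N(t)$, the sum $\frac{1}{N}\sum_{j\neq N}(\lambda_N-\lambda_j)^{-1}$ is non-negative (all terms have $\lambda_N>\lambda_j$), and symmetrically for $\lambda_1(t)$ the analogous sum is non-positive. So if we let $R(t) \deq \max\{\lambda_N(t), -\lambda_1(t)\}$ (the radius of the particle cloud), the repulsion only pushes the extreme particles \emph{outward}, which is the wrong direction for an upper bound; the key point is that the interaction contributes with the right sign only when we look at, say, $\lambda_N(t)$ \emph{increasing}. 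The cleaner route is to introduce a comparison process: fix a large constant and consider the one-dimensional diffusion $d x(t) = \sqrt{\tfrac{2}{\beta N}}\, dB_N(t) - \tfrac12 V'(x(t))\, dt + (\text{drift bound})\,dt$ dominating $\lambda_N$.

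Concretely, the main step I would carry out is the following a priori bound. Using Assumption~\ref{a:asumpV}, $V'$ is Lipschitz-like only from below ($V''\geq -2\fK$), so $V'(x)$ can be very negative for large positive $x$ only if $V$ decreases — but that would make the potential confining-failure; in fact for the drift $-\tfrac12 V'(\lambda_N)$ to be destabilizing we would need $V'(\lambda_N)<0$ with $\lambda_N$ large, and the bound $V''\geq -2\fK$ gives $V'(x) \geq V'(0) - 2\fK x$ for $x\geq 0$, i.e. $-\tfrac12 V'(x) \leq -\tfrac12 V'(0) + \fK x$. Thus along the event that $\lambda_N(t)$ is the running max we get
\[
\del_t \lambda_N(t) \leq \sqrt{\tfrac{2}{\beta N}}\,\del_t B_N(t) + C + \fK\,\lambda_N(t),
\]
where the $\frac{1}{N}\sum_j (\lambda_N-\lambda_j)^{-1}$ term, while positive, is bounded: once we are past time $0$ the particles are distinct, but near $t=0$ with coincident data it can blow up. To handle this I would instead work with the sum $S(t) = \sum_i (\lambda_i(t)-M)^{-1}$ or more robustly bound $\frac1N\sum_{j\neq N}(\lambda_N-\lambda_j)^{-1} \leq \frac1N\sum_{j\neq N}(\gamma_N^{\mathrm{cl}}-\gamma_j^{\mathrm{cl}})^{-1}+\ldots$ — but simplest is: since $\lambda_N \geq \lambda_j$, each term is $\leq (\lambda_N - \lambda_{N-1})^{-1}$, which is not summable a priori. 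The robust fix is to bound $\frac1N\sum_{j\neq i}\frac{1}{\lambda_i-\lambda_j}$ by controlling $\frac1N\sum_j \frac{1}{(\lambda_i-\lambda_j)}$ only in the sense of the full sum over $i$ being zero, and estimate $\lambda_N$ through $\frac1N\sum_i \lambda_i$ together with a spread bound; alternatively use the known $L\to\infty$ hydrodynamic control of $\tilde\mu_t$ from Theorem~\ref{thm:limitm}. I would phrase it as: by Itô's formula applied to $e^{-\fK t}\lambda_N(t)$ and a localization/stopping-time argument, plus the fact that $\sup_{t\le T}\sup_i |B_i(t)| = O(\sqrt{\log N})$ with overwhelming probability (Gaussian maximal inequality, since there are $N$ Brownian motions run for time $T$ with variance $\frac{2t}{\beta N}$), one obtains $\lambda_N(t) \leq e^{\fK T}(\fa + C + O(N^{-1/2}\sqrt{\log N})) =: \fb_0$ outside an event of probability $e^{-N}$. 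The factor $e^{-N}$ (rather than $N^{-D}$) comes from a large-deviation estimate on $\sup_t B_N(t)$: $\bP(\sup_{t\le T} B_N(t) \geq N^{1/2}) \leq e^{-cN}$ since the variance is $O(T/N)$, giving Gaussian deviations at scale $N^{1/2}$ of order $e^{-cN^2/N} = e^{-cN}$ — adjust constants to absorb the union bound over $i$ and get a clean $e^{-N}$.

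The genuinely delicate point — and the one I expect to be the main obstacle — is controlling the singular repulsion term $\frac{1}{N}\sum_{j\neq N}(\lambda_N(t)-\lambda_j(t))^{-1}$ uniformly down to $t=0$ when the initial data may have coincident particles in $\overline{\Delta_N}$. For $t>0$ the particles are in $\Delta_N$ (by the first theorem), but the bound must be uniform on $[0,T]$. I would resolve this by noting that the repulsion term for $i=N$ is positive, so it can only be \emph{removed} when proving the \emph{upper} bound if we also track that it cannot be too large in integrated sense: $\int_0^t \frac1N\sum_{j\neq N}(\lambda_N-\lambda_j)^{-1} ds$ — but the cleanest device is the one already used in the excerpt's comparison lemma. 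Namely, compare $\bm\lambda(t)$ with initial data $\bm\lambda(0)\in\overline{\Delta_N}$ to a solution $\bm\eta(t)$ with nearby initial data in $\Delta_N$ satisfying $\|\bm\eta(0)\|_\infty \le \fa+1$; by that lemma $|\lambda_i(t)-\eta_i(t)| \le e^{\fK t}\max_j|\lambda_j(0)-\eta_j(0)|$, so it suffices to prove \eqref{e:normbound} for initial data in the open Weyl chamber, where the repulsion term is genuinely finite for all $t\ge0$ and the above Itô/Gronwall argument goes through cleanly with a stopping time $\tau = \inf\{t: \lambda_N(t) \ge \fb_0 \text{ or } \lambda_1(t)\le -\fb_0\}$. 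On $\{t<\tau\}$ all the drift estimates hold, and one shows $\bP(\tau \le T)\le e^{-N}$; a symmetric argument bounds $\lambda_1(t)$ from below. Finally set $\fb = \fb(\fa,T)$ to be the resulting constant and take a union bound over the two events.
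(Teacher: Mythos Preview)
Your approach has a genuine gap: you never actually control the repulsion term $\frac{1}{N}\sum_{j\neq N}(\lambda_N(t)-\lambda_j(t))^{-1}$. You correctly identify that this term is positive and therefore pushes $\lambda_N$ outward --- the wrong sign for an upper bound --- but none of your proposed fixes resolves this. Reducing to initial data in the open Weyl chamber via the comparison lemma only guarantees the term is \emph{finite} for $t\geq 0$; it gives no uniform bound, so your Gr\"onwall/stopping-time argument cannot close (on $\{t<\tau\}$ the drift is still unbounded from above). The integrated version $\int_0^t \frac{1}{N}\sum_{j\neq N}(\lambda_N-\lambda_j)^{-1}\,ds$ is likewise not controlled a priori.

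The paper's device sidesteps this entirely by comparing $\bm\lambda(t)$ not to a nearby same-potential solution, but to the \emph{free} ($V\equiv 0$) $\beta$-DBM $\bm\eta(t)$ driven by the \emph{same} Brownian motions and with the \emph{same} initial data. Two things happen: first, the bound on $\max\{|\eta_1(t)|,|\eta_N(t)|\}$ is already available from the literature (\cite[Lemma~4.3.17]{MR2760897}), giving an event $\Omega$ of probability $\geq 1-e^{-N}$ on which $\|\bm\eta(t)\|_\infty\leq \fb_1$. Second, in the difference $\lambda_i(t)-\eta_i(t)$ the noise cancels completely, leaving a pathwise ODE; at $i_0=\argmax_i\{\lambda_i-\eta_i\}$ the combined repulsion term
\[
\frac{1}{N}\sum_{j\neq i_0}\frac{(\lambda_j-\eta_j)-(\lambda_{i_0}-\eta_{i_0})}{(\lambda_{i_0}-\lambda_j)(\eta_{i_0}-\eta_j)}
\]
is nonpositive (the numerator is $\leq 0$ and the denominator $>0$), so it can be dropped. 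The remaining drift $-\tfrac12 V'(\lambda_{i_0})$ is split as $-\tfrac12(V'(\lambda_{i_0})-V'(\eta_{i_0})) - \tfrac12 V'(\eta_{i_0})$; on $\Omega$ the second piece is bounded by $C=\tfrac12\max_{[-\fb_1,\fb_1]}|V'|$, and Assumption~\ref{a:asumpV} handles the first via Gr\"onwall, yielding $\max_i\{\lambda_i(t)-\eta_i(t)\}\leq C(e^{\fK t}-1)/\fK$. The point is that the problematic repulsion is absorbed into the difference structure rather than estimated directly.
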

\begin{proof}
Let $(\bm \eta(t))_{t\geq 0}$ be the strong solution of $\beta$-DBM with potential $V=0$,
\beq\label{e:DBMV0}
\rd \eta_i(t) = \sqrt{\frac{2}{\beta N}} \rd B_i(t) +\frac{1}{N}\sum_{j:j\neq i}\frac{\rd t}{\eta_i(t)-\eta_j(t)}\rd t,\quad i=1,2,\cdots, N.
\eeq
We take the initial data as $\bm\eta(0)=\bm\la(0)\in \overline{\Delta_N}$. Thanks to \cite[Lemma 4.3.17]{MR2760897}, there exists a finite constant $\fb_1=\fb_1(\fa, T)$, such that 
\beq\label{e:normbound}
\Omega\deq\{\max\{|\la_1(t)|,|\la_N(t)|\}\leq \fb_1\},\quad \bP(\Omega)\geq 1-e^{-N}.
\eeq

By taking difference of the stochastic differential equations satisfied by $(\bm \la(t))_{t\geq 0}$ and $(\bm \eta(t))_{t\geq 0}$, we get
\beq\label{e:diffDBM2}
\del_t(\la_i(t)-\eta_i(t))=\frac{1}{N}\sum_{j:j\neq i}\frac{(\la_j(t)-\eta_j(t))-(\la_i(t)-\eta_i(t))}{(\la_i(t)-\la_j(t))(\eta_i(t)-\eta_j(t))}\rd t -\frac{1}{2} V'(\la_i(t))\rd t.
\eeq
Let $i_0=\argmax_{i\in\qq{N}}\{\la_i(t)-\eta_i(t)\}$. For $i=i_0$, the first term of \eqref{e:diffDBM2} is non-positive, and thus on the event $\Omega$,
\begin{align}\begin{split}
\del_t(\la_{i_0}(t)-\eta_{i_0}(t))
\leq& -\frac{1}{2}\left(V'(\la_{i_0}(t))-V'(\eta_{i_0}(t))\right)-\frac{1}{2}V'(\eta_{i_0}(t))\\
\leq& -\frac{1}{2}\left(V'(\la_{i_0}(t))-V'(\eta_{i_0}(t))\right)+C,
\end{split}\end{align}
where $C=\max_{x\in[-\fb_1,\fb_1]}|V'(x)|/2$. Then thanks to Assumption \eqref{a:asumpV}, either $\la_{i_0}(t)-\eta_{i_0}(t)< 0$, or
$\del_t(\la_{i_0}(t)-\eta_{i_0}(t))\leq {\fK}(\la_{i_0}(t)-\eta_{i_0}(t))+C$. Therefore, it follows from Gronwall's inequality,
\beq
\max_{i\in\qq{N}}\{\la_i(t)-\eta_i(t)\}\leq \frac{C(e^{{\fK}t}-1)}{{\fK}}.
\eeq
And thus
\beq
\max_{i\in\qq{N}}\{\la_i(t)\}\leq \fb_1+\frac{C(e^{{\fK}t}-1)}{{\fK}}.
\eeq

Similarly, let $i_0=\argmin_{i\in\qq{N}}\{\la_i(t)-\eta_i(t)\}$, then either $\la_{i_0}(t)-\eta_{i_0}(t)> 0$, or
$\del_t(\la_{i_0}(t)-\eta_{i_0}(t))\geq {\fK}(\la_{i_0}(t)-\eta_{i_0}(t))-C$.  It follows from Gronwall's inequality that
$
\min_{i\in\qq{N}}\{\la_i(t)\}\geq -\fb_1-C(e^{{\fK}t}-1)/{\fK}. 
$
Proposition \ref{normbound} follows by taking $\fb=\fb_1+C(e^{{\fK}t}-1)/{\fK}$.
\end{proof}

Note that the constant $\fb$ in the previous proposition depends only on $V$ through its $C^1$ norm on the interval $[- \fb_1, \fb_1]$ and $\fK$.  Hence, if we replace $V'(x)$ by $V'(x) \chi (x)$ where $\chi$ is a smooth cut-off function on $[-2 \fb , 2\fb]$ (we assume $\fb > 1$), then by Proposition \ref{normbound} the solutions of \eqref{DBM} with the original potential $V'(x)$ and the cut-off potential $V'(x)\chi(x)$ agree with exponentially high probability.  Hence for the remainder of the paper it will suffice for our purposes to work with the cut-off potential $V'(x) \chi(x)$.

We introduce the following quasi-analytic extension of $V'$ of order three,
\beq\label{def:V'}
V'(x+\ri y)\deq\left(V'(x)\chi(x)+\ri y \del_x(V'(x)\chi(x)) -\frac{y^2}{2}\del_x^2(V'(x)\chi(x))\right)\chi(y).
\eeq
We denote,
\beq
\del_z=\frac{1}{2}(\del_x-\ri \del_y),\quad \del_{\bar z}=\frac{1}{2}(\del_x+\ri \del_y).
\eeq

We rewrite \eqref{e:limitm0} in the following
\begin{align}\label{eq:dm0}\begin{split}
\del_t m_t(z)=\del_z m_t(z)\left(m_t(z)+\frac{V'(z)}{2}\right)+\frac{m_t(z)\del_z V'(z)}{2}+\int_{\bR}g(z,x) \rd \mu_t(x),
\end{split}
\end{align}
where 
\beq \label{def:gzx}
g(z,x)\deq\frac{V'(x)-V'(z)-(x-z)\del_zV'(z)}{2(x-z)^2},\quad g(x,x)\deq\frac{V'''(x)}{4}.
\eeq

By our definition \eqref{def:V'}, $V'$ is quasi-analytic along the real axis. One can directly check the following properties of $g(z,x)$ and $V$.
\begin{proposition}\label{e:gbound}
Suppose $V$ satisfies  Assumption \ref{a:asumpV}.  Let $V'(z)$ and $g(z,x)$ be as defined in \eqref{def:V'} and \eqref{def:gzx}.  There exists a universal constant $C$ depending on $V$, such that 
\begin{enumerate}
\item $\|V'(z)\|_{C^1} \leq C$, $ | \Im [ V' (z) ] | \leq C | \Im [z]|  $ and $| \Im[\del_z V'(z)] | \leq C | \Im [ z ]|$.  
\item The following bounds hold uniformly over $z\in \bC$ and $x \in \bR$.  We have $ |g (z, x) | + | \del_x g(z, x) | \leq C$.  Furthermore, $ | \del^2_x g(z, x) | \leq C |z-x|^{-1}$ and $| \Im [ g (z, x) ] | \leq C |\Im [z]|$. 


\item If we further assume $V$ is $C^5$, then $\|V'(z)\|_{C^2} \leq C$, and uniformly over $z\in \bC$ and $x \in \bR$, $|\del_z g(z,x)|+|\del_{\bar z}g(z,x)|\leq C$.

\end{enumerate}
\end{proposition}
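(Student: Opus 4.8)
The strategy is to reduce every assertion to elementary estimates on the explicit formula \eqref{def:V'}. Write $W:=V'\chi$, a $C^3$ function (and $C^4$ if $V\in C^5$) supported in $[-2\fb,2\fb]$ with $\|W\|_{C^3}\le C$ (resp.\ $\|W\|_{C^4}\le C$) by Assumption \ref{a:asumpV}; on $[-2\fb,2\fb]$ one has $\chi\equiv1$ and $W=V'$. Three facts do all the work: (i) since $\chi\equiv1$ near $0$, $\chi^{(k)}$ vanishes on $[-2\fb,2\fb]$ for every $k\ge1$, so $|\chi^{(k)}(y)|\le C_m|y|^m$ for all $m\ge0$ and $k\ge1$; (ii) for $x\in\bR$ and $z=a+\ri b$, $|x-z|\ge|b|=|\Im z|$ and $|x-\bar z|=|x-z|$; (iii) $|x-a|^s\le|x-z|^s$ for $0\le s\le1$.

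For Part 1 (and the $C^2$ bound of Part 3) I would differentiate \eqref{def:V'} directly: every derivative of $V'(z)$ is a finite sum of terms of the form (a bounded function among $W,W',W'',W'''$, and also $W^{(4)}$ when $V\in C^5$, evaluated at $\Re z$) $\times$ (a polynomial in $\Im z$) $\times\,\chi^{(k)}(\Im z)$, all supported in a compact $\Im z$-set, so $\|V'(z)\|_{C^1}\le C$ (resp.\ $\|V'(z)\|_{C^2}\le C$). From \eqref{def:V'}, $\Im[V'(z)]=\Im z\cdot W'(\Re z)\chi(\Im z)=\OO(|\Im z|)$. Computing $\del_zV'(z)=\tfrac12(\del_x-\ri\del_y)V'(z)$ one finds its imaginary part is a term proportional to $\Im z$ plus terms proportional to $\chi'(\Im z)$, the latter $\OO(|\Im z|^2)$ by (i); hence $|\Im[\del_zV'(z)]|\le C|\Im z|$. (The same computation gives $\del_{\bar z}V'(z)=\OO(|\Im z|^2)$, i.e.\ the ``order-three'' quasi-analyticity.)

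Part 2 is the heart of the matter. Fix $z=a+\ri b$, $x\in\bR$, set $t:=x-a$, and split into $|x-z|\ge1$ and $|x-z|\le1$. If $|x-z|\ge1$ there is no singularity: writing $g=\mathcal M/(2(x-z)^2)$ with $\mathcal M=W(x)-V'(z)-(x-z)\del_zV'(z)$, Part 1 and $\|W\|_{C^2}\le C$ give $|\mathcal M|\le C(1+|x-z|)$, $|\del_x\mathcal M|\le C$, $|\del_x^2\mathcal M|\le C$; dividing by powers of $|x-z|\ge1$ yields $|g|+|\del_xg|\le C$ and $|\del_x^2g|\le C\le C|z-x|^{-1}$, and a short case analysis (distinguishing $|\Im z|$ small from not) gives $|\Im g|\le C|\Im z|$ from the bounds on $\Im V'(z),\Im\del_zV'(z)$. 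If $|x-z|\le1$ then $|\Im z|<2\fb$, so in \eqref{def:V'} and its $z$-derivative the cutoff acts trivially: $V'(z)=W(a)+\ri bW'(a)-\tfrac{b^2}2W''(a)$ and $\del_zV'(z)=W'(a)+\ri bW''(a)-\tfrac{b^2}4W'''(a)$. Taylor-expanding $W(x),W'(x)$ about $a$ with integral remainders $R_3,R_2'$ (so $|R_3(t)|\le C|t|^3$, $\del_tR_3=R_2'$, $|R_2'(t)|\le Ct^2$, $|\del_tR_2'(t)|\le C|t|$), substituting, and using the algebraic identities $\tfrac{t^2}2-\tfrac{b^2}2-\ri bt=\tfrac12(x-z)^2$ and $\tfrac{b^2t}4-\tfrac{\ri b^3}4=\tfrac{b^2}4(x-z)$, the numerator collapses and one is left with the closed form
\[ g(z,x)=\tfrac14 W''(a)+\frac{b^2W'''(a)}{8(x-z)}+\frac{R_3(t)}{2(x-z)^2}, \]
with $\del_xg,\del_x^2g$ obtained by differentiating in $t$. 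Now every singular factor is matched: $b^2/|x-z|\le|b|\le C$ by (ii); $|R_3(t)|/|x-z|^2\le|t|^3/|x-z|^2\le|x-z|\le C$ by (iii); for $\del_x^2g$ the worst term is $\sim\del_tR_2'(t)/(x-z)^2$, bounded by $C|t|/|x-z|^2\le C/|x-z|$, which is precisely the stated (weaker) bound — the lack of a bounded $W^{(4)}$ under Assumption \ref{a:asumpV} is exactly what obstructs a uniform estimate here. Finally $W''(a)$ is real and, via $\Im[(x-z)^{-1}]=b/|x-z|^2$, $\Im[(x-z)^{-2}]=2tb/|x-z|^4$ and $|t|^k\le|x-z|^k$, the other two terms have imaginary part $\OO(|b|)$, giving $|\Im g(z,x)|\le C|\Im z|$.

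For Part 3, $V\in C^5$ gives $W\in C^4$, so we may expand one order further ($R_3(t)=\tfrac{t^3}6W'''(a)+E(t)$ with $|E(t)|\le C|t|^4$, $|\del_tE(t)|\le C|t|^3$) and $W'',W'''$ are $C^1$ with bounded derivatives; inserting this into the closed form for $g$ (and treating $|x-z|\ge1$ crudely as before) and applying $\del_z=\tfrac12(\del_a-\ri\del_b)$, $\del_{\bar z}=\tfrac12(\del_a+\ri\del_b)$ — remembering $t=x-a$, so $\del_a$ also hits $t$ — every resulting term is controlled by the same inequalities, giving $|\del_zg(z,x)|+|\del_{\bar z}g(z,x)|\le C$. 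I expect the main obstacle to be the bookkeeping in Part 2: organising the Taylor substitution so that, after the several cancellations in the numerator, each inverse power of $(x-z)$ is paired with a factor $(x-z)$, a factor $(\Im z)^2$, or a remainder of the right order; keeping the regimes $|x-z|\lessgtr1$ separate is what lets $\chi$ be either $\equiv1$ (near the real axis) or irrelevant (away from it, where $|x-z|$ is bounded below), and keeps all constants dependent on $V$ only through the finitely many norms the assumptions allow.
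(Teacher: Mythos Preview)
The paper does not give a proof of this proposition; it simply states that ``one can directly check'' the properties. Your proposal supplies those details correctly and in essentially the canonical way: reduce to the compactly supported $W=V'\chi$, expand $V'(z)$ and $\del_zV'(z)$ explicitly from \eqref{def:V'}, and for $g$ perform the Taylor substitution that collapses the numerator to the clean closed form
\[
g(z,x)=\tfrac14 W''(a)+\frac{b^2W'''(a)}{8(x-z)}+\frac{R_3(t)}{2(x-z)^2},
\]
from which all four bounds in Part~2 and the $z$-derivative bounds in Part~3 follow by pairing each inverse power of $(x-z)$ with a matching factor of $|t|$ or $b^2$. Your observation that the $|z-x|^{-1}$ in the $\del_x^2g$ bound is forced by the absence of a bounded $W^{(4)}$ under Assumption~\ref{a:asumpV} is exactly the right diagnosis.

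Two small slips to clean up. First, the claim ``$\chi^{(k)}$ vanishes on $[-2\fb,2\fb]$ for every $k\ge1$'' is false as stated; what is true (and what you actually use) is that $\chi^{(k)}$ vanishes on a neighbourhood of $0$ and has compact support, so $|\chi^{(k)}(y)|\le C_m|y|^m$. Second, ``$|x-z|\le1$ implies $|\Im z|<2\fb$'' is not quite the condition you need for the cutoff to act trivially: you need $|\Im z|$ to lie in the plateau region of $\chi$, which follows since $|\Im z|\le 1<\fb$ (the paper takes $\fb>1$). Neither affects the correctness of the argument.
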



We define the following quasi-analytic extension of $g(z,\cdot)$ of order two,
\beq\label{def:tdg}
\td g(z,x+\ri y)\deq(g(z,x)+\ri y \del_x g(z,x))\chi(y),
\eeq 
By the Helffer-Sj{\"o}strand formula,
\beq
\int_{\bR}g(z,x) \rd \mu_t(x)=\frac{1}{\pi}\int_{\bC} \del_{\bar w} \td g(z,w) m_t(w)\rd^2 w,
\eeq
and  so we can rewrite \eqref{eq:dm0} as an autonomous differential equation of $m_t(z)$:
\beq \label{e:limitm}
\del_t m_t(z)=\del_zm_t(z)\left(m_t(z)+\frac{V'(z)}{2}\right)+\frac{m_t(z)\del_z V'(z)}{2}+\frac{1}{\pi}\int_{\bC} \del_{\bar w} \td g(z,w) m_t(w)\rd^2 w.
\eeq

\subsection{Stieltjes transform of the limit measure-valued process}
\label{sec:mztzt}
In this subsection we analyze the differential equation of the Stieltjes transform of the limiting measure-valued process
\eqref{e:limitm} with initial data $\mu_0$ which we assume to have $\supp \mu_0\in[-\fa,\fa]$. We fix a constant time $T$.  By Theorem \ref{thm:limitm} and Proposition \ref{normbound}, there exists a finite constant $\fb=\fb({\fa}, T)$ such that $\supp \mu_t\in[-\fb,\fb]$ for any $0\leq t\leq T$.

We analyze \eqref{e:limitm} by the method of characteristics. Let
\beq \label{def:zt}
\del_t z_t(u)=-m_t(z_t(u))-\frac{V'(z_t(u))}{2},\qquad z_0=u\in \bC_+,
\eeq
If the context is clear, we omit the parameter $u$, i.e., we simply write $z_t$ instead of $z_t(u)$. 

For any $\epsilon>0$, let $\bC_+^\epsilon=\{z:\Im[z]>\epsilon\}$.  Since $m_t$ is analytic, bounded and Lipschitz on the closed domain $\overline{\bC_+^\epsilon}$, we have that for any $u$ with $u\in\bC_+^\epsilon$, the solution  $z_t(u)$ exists, is unique, and is well defined before exiting the domain. Thanks to the local uniqueness of the solution curve, it follows,  by taking $\epsilon\rightarrow 0$,  that for any $u$ with $\Im[u]>0$, the solution curve $z_t(u)$ is well defined before it exits the upper half plane. For any $u\in \bC_+$, either the flow $z_t(u)$ stays in the upper half plane forever, or there exists some time $t$ such that $\lim_{s\rightarrow t}\Im[z_s(u)]=0$.

Plugging \eqref{def:zt} into \eqref{e:limitm}, and applying the chain rule we obtain 

\beq \label{e:mtzt}
\del_t m_t(z_t)=\frac{m_t(z_t)\del_z V'(z_t)}{2}+\frac{1}{\pi}\int_{\bC} \del_{\bar w} \td g(z_t,w) m_t(w)\rd^2 w.
\eeq

The behaviors of $z_s$ and $m_s(z_s)$ are governed by the system of equations \eqref{def:zt} and \eqref{e:mtzt}. The following properties of the flows $\Im[z_s]$ and $\Im[m_s(z_s)]$ will be used throughout the paper.
\begin{proposition}\label{p:propzt}
Suppose $V$ satisfies the Assumption \ref{a:asumpV}. Fix a time $T>0$. There exists a constant $C=C(V,\fb)$ such that the following holds.  For any $0\leq s\leq t\leq T$ with $\Im [ z_t ] >0$, the following estimates hold uniformly for initial $u = z_0$ in compact subsets of $ \mathbb{C}_+$.
\begin{align}
\label{dermtbound}
\Im[m_t(z)]\Im[z]\leq &1,\quad |\del_z m_t(z)|\leq \frac{\Im[m_t(z)]}{\Im[z]},\\
\label{e:msmtbound}
 e^{-C(t-s)}\Im[z_t]\leq& \Im[z_s],\\
\label{e:mtbound}
e^{-tC}\Im[m_0(z_0)]\leq &\Im[m_t(z_t)]\leq e^{tC}\Im[m_0(z_0)],\\
\label{e:ztbound}
e^{-Ct}\left(\Im[z_0]-\frac{e^{Ct}-1}{C}\Im[m_0(z_0)]\right)\leq&\Im[z_t]\leq e^{Ct}\left(\Im[z_0]-\frac{1-e^{-Ct}}{C}\Im[m_0(z_0)]\right),
\end{align}
and 
\begin{align}\label{e:propzt}
\int_{s}^t\frac{\Im[m_\tau(z_\tau)] \rd \tau}{\Im[z_\tau]}\leq C(t-s)+\log \frac{\Im[z_s]}{\Im[z_t]},\qquad \int_{s}^t\frac{\Im[m_\tau(z_\tau)]\rd \tau}{\Im[z_\tau]^p} \leq \frac{C}{\Im[z_t]^{p-1}},\quad p>1
\end{align}
\end{proposition}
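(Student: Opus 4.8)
The plan is to derive all the estimates in Proposition \ref{p:propzt} from the two governing equations \eqref{def:zt} and \eqref{e:mtzt}, together with the structural bounds on $V'$ and $g$ collected in Propositions \ref{e:gbound}, by a sequence of Gronwall-type arguments applied to the imaginary parts. The first display \eqref{dermtbound} is not dynamical: $\Im[m_t(z)]\Im[z]\le 1$ follows from the integral representation $m_t(z)=\int (x-z)^{-1}\rd\mu_t(x)$ since $\Im[(x-z)^{-1}] = \Im[z]/|x-z|^2 \le 1/\Im[z]$ and $\mu_t$ is a probability measure; the Cauchy-integral (or Poisson-kernel) bound $|\del_z m_t(z)| = |\int (x-z)^{-2}\rd\mu_t(x)| \le \int |x-z|^{-2}\rd\mu_t(x) = \Im[m_t(z)]/\Im[z]$ is immediate from the same representation.

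Next I would establish \eqref{e:mtbound}. Taking imaginary parts in \eqref{e:mtzt}, one gets
\[
\del_t \Im[m_t(z_t)] = \Im\!\left[\frac{m_t(z_t)\del_z V'(z_t)}{2}\right] + \frac{1}{\pi}\Im\!\left[\int_\bC \del_{\bar w}\td g(z_t,w)\, m_t(w)\,\rd^2 w\right].
\]
Using that $\del_z V'(z_t)$ is bounded with $|\Im[\del_z V'(z_t)]|\le C\Im[z_t]$ and that the Helffer-Sjöstrand integral equals $\int_\bR g(z_t,x)\rd\mu_t(x)$ with $|\Im[g(z_t,x)]|\le C\Im[z_t]$ and $|g|\le C$, one bounds the right side by $C(\Im[m_t(z_t)] + \Im[z_t]\,(\text{bounded}))$, but in fact since $\Im[z_t]\le \Im[z_t]\cdot 1 \le \Im[z_t]/(\Im[z_t]\Im[m_t(z_t)])$... the cleaner route is to observe $\Im[z_t] \cdot \Im[m_t(z_t)] \le 1$ so $\Im[z_t]$ alone is not directly controlled by $\Im[m_t]$; instead one writes $|\Im[m_t(w)\cdot(\ldots)]|$ terms carefully, noting the $\Im[z_t]$-prefactor on the problematic pieces can be paired with $\Im[m_t(z_t)] \le 1/\Im[z_t]$ to give a bound $C\Im[m_t(z_t)]$ on the entire right-hand side (up to also a term like $C\Im[z_t]\le C$ which, since we only need it while $\Im[m_t(z_t)]$ could be small, needs the observation that $\del_t \log\Im[m_t(z_t)]$ is what we want bounded — so one should track the multiplicative structure). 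Concretely, dividing by $\Im[m_t(z_t)]$ and using $|\del_t\log\Im[m_t(z_t)]|\le C$, Gronwall gives \eqref{e:mtbound}. The same differential inequality, integrated, yields $\int_s^t \Im[m_\tau(z_\tau)]\rd\tau \asymp (t-s)\Im[m_s(z_s)]$-type control which feeds into later estimates.

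Then \eqref{def:zt} gives $\del_t \Im[z_t] = -\Im[m_t(z_t)] - \Im[V'(z_t)]/2$, and $|\Im[V'(z_t)]|\le C\Im[z_t]$ gives $|\del_t \Im[z_t] + \Im[m_t(z_t)]| \le C\Im[z_t]$, i.e. $-\Im[m_t(z_t)] - C\Im[z_t] \le \del_t\Im[z_t] \le -\Im[m_t(z_t)] + C\Im[z_t]$. Using \eqref{e:mtbound} to replace $\Im[m_t(z_t)]$ by $e^{\pm tC}\Im[m_0(z_0)]$ and solving the resulting linear ODE inequalities by the integrating factor $e^{\mp Ct}$ yields \eqref{e:ztbound}. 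The monotonicity-type bound \eqref{e:msmtbound}, $e^{-C(t-s)}\Im[z_t]\le\Im[z_s]$, follows from the \emph{lower} inequality $\del_t\Im[z_t]\ge -\Im[m_t(z_t)]-C\Im[z_t]\ge -(1/\Im[z_t])-C\Im[z_t]$... actually more simply from $\del_t\Im[z_t]\le -\Im[m_t(z_t)]+C\Im[z_t]\le C\Im[z_t]$ (dropping the negative term $-\Im[m_t(z_t)]\le 0$), which gives $\Im[z_t]\le e^{C(t-s)}\Im[z_s]$, i.e. \eqref{e:msmtbound}. Finally, for \eqref{e:propzt}: from $\del_t\Im[z_t]\le -\Im[m_t(z_t)]+C\Im[z_t]$ we get $\Im[m_t(z_t)] \le -\del_t\Im[z_t] + C\Im[z_t]$, hence $\frac{\Im[m_\tau(z_\tau)]}{\Im[z_\tau]} \le -\del_\tau\log\Im[z_\tau] + C$; integrating from $s$ to $t$ gives the first bound. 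For the second, $\frac{\Im[m_\tau(z_\tau)]}{\Im[z_\tau]^p} \le \frac{-\del_\tau\Im[z_\tau]}{\Im[z_\tau]^p} + \frac{C}{\Im[z_\tau]^{p-1}} = \frac{1}{p-1}\del_\tau\big(\Im[z_\tau]^{-(p-1)}\big) + \frac{C}{\Im[z_\tau]^{p-1}}$; since $\Im[z_\tau]$ is (up to the $e^{C\tau}$ factor from \eqref{e:msmtbound}) decreasing for $\tau\le t$, both terms integrate to $O(\Im[z_t]^{-(p-1)})$, giving the claim.

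The main obstacle is handling the Helffer-Sjöstrand / potential term $\frac{1}{\pi}\int_\bC\del_{\bar w}\td g(z_t,w)m_t(w)\rd^2 w$ in the imaginary-part estimate for $m_t(z_t)$ with the correct power of $\Im[z_t]$: one must show this term is bounded by $C\,\Im[m_t(z_t)]$ (not merely by an $O(1)$ constant, which would be too weak when $\Im[m_t(z_t)]$ is small), using the identity $\int_\bC\del_{\bar w}\td g(z_t,w)m_t(w)\rd^2 w = \pi\int_\bR g(z_t,x)\rd\mu_t(x)$, the bound $|\Im[g(z_t,x)]| \le C\Im[z_t]$ from Proposition \ref{e:gbound}, and the reciprocal bound $\Im[z_t]\le 1/\Im[m_t(z_t)]$ — but this last step actually forces one to be careful, since $\Im[z_t]\le 1/\Im[m_t(z_t)]$ upper-bounds $\Im[z_t]$ by something large, not small. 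The resolution is that $g(z,x)$ and $\del_z V'$ contribute through terms whose imaginary parts are genuinely $O(\Im[z_t])$ times bounded quantities, so that after dividing the whole equation by $\Im[m_t(z_t)]$ one needs $\Im[z_t]/\Im[m_t(z_t)]$ bounded — which is \emph{false} in general — so instead one does not divide but rather notes $\del_t\Im[m_t(z_t)] = O(\Im[m_t(z_t)]) + O(\Im[z_t])$ and treats this coupled with the $\Im[z_t]$ equation; alternatively, and more robustly, one absorbs the $O(\Im[z_t])$ contributions using $\Im[z_t]\Im[m_t(z_t)]\le 1 \Rightarrow \Im[z_t] \le \Im[z_t]$, observing that the genuinely dangerous regime is $\Im[z_t]$ small, where $O(\Im[z_t])$ is harmless, while for $\Im[z_t]$ order one the factor $e^{\pm Ct}$ absorbs everything on the bounded time interval $[0,T]$. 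Making this dichotomy rigorous — or equivalently solving the $2\times 2$ linear ODE system for $(\Im[z_t],\Im[m_t(z_t)])$ with bounded coefficients — is the technical heart of the proof; the remaining estimates are then routine consequences.
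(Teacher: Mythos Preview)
Your treatment of \eqref{dermtbound}, \eqref{e:msmtbound}, \eqref{e:ztbound}, and \eqref{e:propzt} is correct and essentially matches the paper. The gap is in \eqref{e:mtbound}, and you have correctly located it: after taking imaginary parts in \eqref{e:mtzt} you are left with terms of size $O(\Im[z_s])$ coming from $\Im[\del_z V'(z_s)]\Re[m_s(z_s)]$ and $\int \Im[g(z_s,x)]\rd\mu_s(x)$, and you need these to be $O(\Im[m_s(z_s)])$. Your two proposed fixes do not work. The ``$2\times 2$ linear ODE system with bounded coefficients'' only yields $\Im[m_t(z_t)]\le e^{Ct}\bigl(\Im[m_0(z_0)]+\Im[z_0]\bigr)$, which is far weaker than the multiplicative bound \eqref{e:mtbound} (and useless for the later applications, where one needs $\Im[m_t(z_t)]\asymp\Im[m_0(z_0)]$ even when $\Im[z_0]\gg\Im[m_0(z_0)]$). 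The ``dichotomy'' is not made precise, and the reciprocal bound $\Im[z_t]\le 1/\Im[m_t(z_t)]$ indeed goes the wrong way.

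The missing idea is structural, not analytic: use that $\supp\mu_s\subset[-\fb,\fb]$ and that the cutoff on $V'$ forces $\del_z V'(z_s)=0$ whenever $|\Re[z_s]|>2\fb$. On the complementary region $|\Re[z_s]|\le 2\fb$, every $x\in\supp\mu_s$ satisfies $|x-\Re[z_s]|\le 3\fb$, and hence
\[
\Im[z_s]\,\bigl|\Re[m_s(z_s)]\bigr|
\;\le\;\Im[z_s]\int\frac{|x-\Re[z_s]|}{|x-z_s|^2}\rd\mu_s(x)
\;\le\;3\fb\int\frac{\Im[z_s]}{|x-z_s|^2}\rd\mu_s(x)
\;=\;3\fb\,\Im[m_s(z_s)].
\]
This converts $|\Im[\del_z V'(z_s)]\Re[m_s(z_s)]|=O(\Im[z_s]|\Re[m_s(z_s)]|)$ into $O(\Im[m_s(z_s)])$ directly. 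The same compact-support observation (together with the $y$-cutoff $\chi(y)$ in the definition of $V'(z)$ when $\Im[z_s]$ is large) handles the $\int\Im[g(z_s,x)]\rd\mu_s$ term. With $|\del_s\Im[m_s(z_s)]|\le C\Im[m_s(z_s)]$ in hand, Gronwall gives \eqref{e:mtbound}, and the rest of your argument goes through.
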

\begin{proof}

The estimates \eqref{dermtbound} are general and hold for any Stieltjes transform.  First, we have
\begin{align*}
\Im[ m_t(z)]\Im[z]=\int_{\bR}\frac{\Im[z]^2\rd \mu_t(x)}{|x-z|^2}\leq \int_\bR \rd \mu_t (x)= 1,
\end{align*}
and secondly we have,
\begin{align*}
|\del_z m_t(z)|=\left|\int_{\bR}\frac{\rd \mu_t(x)}{(x-z)^2}\right|
\leq \frac{1}{\Im[z]}\int_{\bR}\frac{\Im[z]\rd \mu_t(x)}{|x-z|^2}=\frac{\Im[m_t(z)]}{\Im[z]}.
\end{align*}

Since $\Im[m_s(z_s)]\geq 0$, it follows from \eqref{def:zt} and the estimate $|\Im[V'(z_s)]|=\OO(\Im[z_s])$ of Proposition \ref{e:gbound} that there exists a constant $C$ s.t. 
\beq \label{e:dImzbound}
\del_s\Im[z_s]\leq C\Im[z_s].
\eeq
The estimate \eqref{e:msmtbound} follows.  


By Proposition \ref{e:gbound}, $\Im[V'(z)]=\OO( \Im[z])$. It follows from taking imaginary part of \eqref{def:zt} that there exists some constant $C$ depending on $V$, 
such that 
\beq
\label{e:dzt1}\left|\del_s\Im[z_s]+\Im[m_s(z_s)\right| \leq  C\Im[z_s].
\eeq
By rearranging, \eqref{e:dzt1} leads to the inequalities
\beq
\label{e:dzt2}  -e^{Cs}\Im[m_s(z_s)]\leq \del_s \left(e^{Cs}\Im[z_s]\right),
\quad \del_s \left(e^{-Cs}\Im[z_s]\right)\leq -e^{-Cs}\Im[m_s(z_s)].
\eeq

Similarly, by taking the imaginary part of \eqref{e:mtzt}, i.e.
\beq
\del_s m_s(z_s)=\frac{m_s(z_s)\del_z V'(z_s)}{2}+ \int_{\bR} g (z, x ) \d \mu_s (x),
\eeq
 and using the estimates $ | \Im[\del_z V'(z)] | + |\Im[g(z,x)]|=\OO( \Im[z])$  and $|\del_z V' (z) | \leq C$ in  Proposition \ref{e:gbound}  we obtain,
\beq
\label{e:dmt}\left|\del_s\Im[m_s(z_s)]\right|\leq \left|\Im\left[\frac{\del_zV'(z_s)m_s(z_s)}{2}\right]\right|+C'\Im[z_s]\leq C \Im[m_s(z_s)].
\eeq
In the last equality we used $\supp \mu_s\in[-\fb, \fb]$, and so
\begin{align*}
&|\Im[\del_z V'(z_s)]\Re[m_s(z_s)]|=1_{\{ |\Re[z_s]|\leq2\fb \}}\OO(\Im[z_s]|\Re[m_s(z_s)]|)\\
=&1_{ \{ |\Re[z_s]|\leq 2\fb \}}\OO\left(\int_{\bR}\frac{\Im[z_s]\Re[x-z_s]\rd \mu_s(x)}{|x-z_s|^2}\right)
=\OO\left(\int_{\bR}\frac{3\fb\Im[z_s]\rd \mu_s(x)}{|x-z_s|^2}\right)
=\OO(\Im[m_s(z_s)]).
\end{align*}
We also used $\supp \mu_s\in[-\fb, \fb]$, thus for $x\in \supp \mu_s$ and $|\Re[z_s]\leq 2\fb$, it holds $|\Re[x-z_s]|\leq 3\fb$.


The estimate \eqref{e:mtbound} then follows from \eqref{e:dmt} and Gronwall's inequality, and the estimate \eqref{e:ztbound} follows from combining \eqref{e:dzt2} and \eqref{e:mtbound}. For \eqref{e:propzt} we have by \eqref{e:dzt2},
\beq
\int_{s}^t\frac{\Im[m_\tau(z_\tau)] \rd \tau}{\Im[z_\tau]^p}\leq 
\int_{s}^t\frac{-\del_\tau \left(e^{-C\tau}\Im[z_\tau]\right) \rd \tau}{\left(e^{-C\tau}\Im[z_\tau]\right)^p}.
\eeq
The case $p=1$ follows. For $p > 1$, we have
\beq
\int_{s}^t\frac{\Im[m_\tau(z_\tau)] \rd \tau}{\Im[z_\tau]^p}\leq \frac{1}{p-1}\left(\frac{1}{\left(e^{-Ct}\Im[z_t]\right)^{p-1}}-\frac{1}{\left(e^{-Cs}\Im[z_s]\right)^{p-1}}\right)\leq \frac{C'}{\Im[z_t]^{p-1}}.
\eeq

\end{proof}

We have the following result for the flow map $u \to z_t (u)$.
\begin{proposition}\label{prop:invflow}
Suppose that $V$ satisfies Assumption \ref{a:asumpV}. Fix a time $T$. For any $0\leq t\leq T$, there exists an open domain $\Omega_t\subset\bC_+$, such that the vector flow map $u\mapsto z_t(u)$ is a $C^1$ homeomorphism from $\Omega_t$ to $\bC_+$. 
\end{proposition}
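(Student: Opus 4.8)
The plan is to let $\Omega_t$ be the set of starting points whose characteristic survives in the upper half plane up to time $t$,
\[
\Omega_t := \{\, u \in \bC_+ \;:\; z_s(u) \text{ is defined for all } s\in[0,t] \text{ and } \Im[z_s(u)] > 0 \,\},
\]
and to show that $u \mapsto z_t(u)$ maps $\Omega_t$ bijectively onto $\bC_+$, is a local $C^1$ diffeomorphism, and that $\Omega_t$ is open; a bijective local $C^1$ diffeomorphism is automatically a global $C^1$ diffeomorphism, in particular a $C^1$ homeomorphism. First I would check that $\Omega_t$ is open. If $u_0 \in \Omega_t$, the curve $\{z_s(u_0) : s \in [0,t]\}$ is a compact subset of $\bC_+$, hence contained in a compact set $K$ with $K \subset \bC_+$; the vector field $F(s,z) := -m_s(z) - V'(z)/2$ is continuous in $s$ and, by \eqref{dermtbound} and Proposition \ref{e:gbound} (which give $|\del_z m_s(z)| \le \Im[z]^{-2}$ on $\bC_+$ and $\|V'\|_{C^1}\le C$), locally Lipschitz in $z$ on $\bC_+$, so continuous dependence of solutions on the initial condition keeps $z_s(u)\in K$ for all $s\in[0,t]$ once $u$ is close enough to $u_0$; hence $u \in \Omega_t$.

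The main step is surjectivity, which I would obtain by running the characteristic equation backwards in time. Fix $w \in \bC_+$ and consider the forward ODE $\del_r \zeta(r) = m_{t-r}(\zeta(r)) + V'(\zeta(r))/2$ with $\zeta(0) = w$; a solution on $[0,t]$ that stays in $\bC_+$ is precisely a time-reversed characteristic terminating at $w$ at time $t$. The crucial point is that such a trajectory cannot reach the real axis before time $t$: while $\zeta(r) \in \bC_+$ we have $\del_r \Im[\zeta(r)] = \Im[m_{t-r}(\zeta(r))] + \tfrac12 \Im[V'(\zeta(r))] \ge -\tfrac{C}{2}\Im[\zeta(r)]$, since $\Im[m_s(\cdot)] \ge 0$ on $\bC_+$ and $|\Im[V'(z)]| \le C\Im[z]$ by Proposition \ref{e:gbound}, so Gronwall gives $\Im[\zeta(r)] \ge e^{-Ct/2}\Im[w] =: \delta_0 > 0$ throughout. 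On $\{\Im z \ge \delta_0\}$ the field is bounded ($|m_s(z)|\le \Im[z]^{-1}\le \delta_0^{-1}$ and $\|V'\|_\infty\le C$), so $\zeta$ does not blow up and extends to all of $[0,t]$, staying in a fixed compact subset of $\bC_+$. Then $u := \zeta(t)$ lies in $\Omega_t$ (its forward characteristic is $s \mapsto \zeta(t-s)$) and $z_t(u) = \zeta(0) = w$; in particular $\Omega_t \ne \emptyset$. Injectivity follows from the same reversal together with uniqueness: if $z_t(u) = z_t(u') = w$ with $u,u' \in \Omega_t$, then $r \mapsto z_{t-r}(u)$ and $r \mapsto z_{t-r}(u')$ both solve the backward ODE with value $w$ at $r = 0$ and both remain in $\bC_+$, where $F$ is locally Lipschitz, so they coincide on $[0,t]$ and $u = u'$.

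Finally, for the $C^1$ statement I would use that $F$ is $C^1$ in $z$ in the real-variable sense: $m_s$ is holomorphic, hence smooth, on $\bC_+$, and $V'(z)$ from \eqref{def:V'} is $C^1$ jointly in $(\Re z,\Im z)$ because $V\in C^4$ makes $\del_x^2(V'(x)\chi(x))$ of class $C^1$. Smooth dependence on initial conditions then shows $u\mapsto z_t(u)$ is $C^1$ on $\Omega_t$ with real Jacobian $M(s) := D_u z_s(u)$ solving the variational equation $\del_s M = (D_zF)(s,z_s)M$, $M(0)=\Id$, so that $\det M(t) = \exp\!\big(\int_0^t \Tr(D_zF)(s,z_s)\,\d s\big) \ne 0$. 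Hence $z_t$ is a local $C^1$ diffeomorphism everywhere on $\Omega_t$; together with the bijectivity above this makes it a $C^1$ diffeomorphism (in particular a homeomorphism) of $\Omega_t$ onto $\bC_+$, and since $\bC_+$ is connected so is $\Omega_t$, which is why one may call it a domain. I expect the backward-flow step to be the main obstacle: one must be sure the time-reversed characteristic does not escape $\bC_+$ before time $t$, which is exactly where the a priori lower bound on $\Im[z_s]$ from Proposition \ref{p:propzt} (equivalently the inequality $\del_s\Im[z_s]\le C\Im[z_s]$) is used, in tandem with the boundedness of the cut-off vector field on $\{\Im z\ge\delta_0\}$ to rule out finite-time blow-up.
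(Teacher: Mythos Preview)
Your proposal is correct and follows essentially the same approach as the paper: both construct the inverse of $u\mapsto z_t(u)$ by running the characteristic backwards, $\del_s y_s(v)=m_{t-s}(y_s(v))+V'(y_s(v))/2$, and use $\Im[m_{t-s}]\ge 0$ together with $|\Im[V'(z)]|\le C\Im[z]$ to get a Gronwall bound keeping $y_s$ in $\bC_+$. Your write-up is more detailed than the paper's (you explicitly verify openness of $\Omega_t$, injectivity via ODE uniqueness, non-vanishing of the Jacobian by Liouville's formula, and absence of blow-up), whereas the paper simply exhibits the backward flow as the $C^1$ inverse.
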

\begin{proof}
We define 
\begin{align}
\Omega_t\deq  \{u\in \bC_+: z_s(u)\in \bC_+, 0\leq s\leq t\}.
\end{align}
By Assumption \ref{a:asumpV}, $V$ is a $C^4$ function. From our construction of $V'(z)$ as in \eqref{def:V'}, $V'(z)$ is a $C^1$ function. Thus the vector flow map $u\mapsto z_t(u)$ is a $C^1$ map from $\Omega_t$ to $\bC_+$. We need to show that it is invertible. Define the following flow map by
\beq\label{e:invflow}
\del_sy_s(v)=m_{t-s}(y_{s}(v))+\frac{V'(y_{s}(v))}{2}, \quad y_0=v\in \bC_+.
\eeq
for $0 \leq s \leq t$.
Since $\Im[m_{t-s}(y_s(v))]\geq 0$, there exists some constant $C$ depending on $V$ 
\beq
\del_s \left( e^{Cs}\Im[y_s]\right)\geq 0.
\eeq
Therefore $y_s(v)$ is well defined for $0\leq s\leq t$, and it will stay in $\bC_+$.  Furthermore, $v\mapsto y_t(v)$ is a $C^1$ map, and is  the inverse of $u\mapsto z_t(u)$. 
\end{proof}

\section{Rigidity of $\beta$-DBM}\label{s:rigidity}

In this section we prove the local law and optimal rigidity for $\beta$-DBM with general initial data.  Let $\mu_t$ be the unique solution of \eqref{e:MVeq} with initial data
\beq
\mu_0 (x) := \frac{1}{N} \sum_{i=1}^N \delta_{\lambda_i (0) } (x).
\eeq
Denote by $m_t$ its Stieltjes transform.
  We introduce some notation used in the statement and proof of the local law.  We fix a small parameter $\delta$, a large constant $\fc\geq1$, a large constant $K$, and control parameter $M=(\log N)^{2+2\delta}$. For any time $s\ll1 $, we define the spectral domain, 
\beq\label{def:dom}
\dom_s=\left\{w\in \bC_+: \Im[w]\geq \frac{e^{Ks}M\log N}{N\Im [m_s(w)]}\vee \frac{e^{Ks}}{N^{\fc}},\quad \Im[w]\leq 3{\fb}-s,\quad  |\Re[w]|\leq 3\fb-s\right\}.
\eeq

The following is the local law for $\beta$-DBM.
\begin{theorem}\label{t:rigidity}
Suppose $V$ satisfies the Assumption \ref{a:asumpV}. Fix $T=(\log N)^{-2}$. Let $\beta\geq 1$ and assume that the initial data satisfies $-\fa\leq \la_1(0)\leq \la_2(0)\cdots\leq \la_N(0)\leq \fa$ for a fixed $\fa >0$.  Uniformly for any $0\leq t\ll T$, and $w\in \dom_t$ 
the following estimate holds with overwhelming probability,
\beq \label{e:diffmm}
|\td m_t(w)- m_t(w)|\leq \frac{M}{N\Im[w]}.
\eeq
\end{theorem}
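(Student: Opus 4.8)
**The plan is to analyze the stochastic differential equation satisfied by $\td m_t$ along the characteristics $z_t(u)$ of the limiting equation, and to run a continuity/bootstrap argument in $t$.** First I would derive, from the SDE system \eqref{DBM} and Itô's formula, the stochastic differential equation for $\td m_t(z) = N^{-1}\sum_i (\lambda_i(t)-z)^{-1}$. Setting $z = z_t(u)$ and using the characteristic equation \eqref{def:zt}, the deterministic transport term $\del_z m_t (m_t + V'/2)$ is cancelled exactly as in the passage from \eqref{e:limitm} to \eqref{e:mtzt}; what remains is an equation for $\td m_t(z_t) - m_t(z_t)$ driven by (i) a martingale term $\rd \cM_t = \sqrt{\tfrac{2}{\beta N}}\cdot\tfrac1N\sum_i (\lambda_i(t)-z_t)^{-2}\rd B_i(t)$, whose quadratic variation is $O\big(\tfrac{1}{\beta N^3}\int \Im[m_t(z_t)]^2 \Im[z_t]^{-2}\big)$-ish and is controlled using \eqref{dermtbound} and \eqref{e:propzt}; (ii) an Itô second-order correction term of order $N^{-2}\sum_i(\lambda_i-z_t)^{-3}$, i.e. $O(N^{-1}\del_z \td m_t)$; and (iii) the "error" coming from replacing $\int g(z,x)\rd\td\mu_t$ by $\int g(z,x)\rd\mu_t$, which via Helffer–Sjöstrand and Proposition \ref{e:gbound} is itself controlled by $\td m_s - m_s$ at nearby points, plus a cut-off error that is negligible with exponentially high probability by Proposition \ref{normbound}.

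**Next I would set up the self-improving (bootstrap) estimate.** Let $u_t(w)$ denote the point in $\Omega_t$ with $z_t(u_t(w)) = w$ (this inverse flow exists by Proposition \ref{prop:invflow}), and define the stopping-time-type quantity $\tau$ as the first time the bound \eqref{e:diffmm} fails somewhere on $\dom_t$ (taking a fine net of $w$'s and using Lipschitz continuity to pass from the net to all of $\dom_t$, with an extra $N^{-100}$ slack absorbed into $M$). On $[0,\tau]$, integrating the SDE for $\td m_s(z_s) - m_s(z_s)$ from $0$ to $t$ along the characteristic ending at $w$, one gets
\beq
|\td m_t(w) - m_t(w)| \leq |\td m_0(u_0) - m_0(u_0)| + \Big|\int_0^t \rd\cM_s\Big| + \int_0^t \big(\text{Itô correction}\big)\rd s + \int_0^t \big(\text{feedback from } \td m_s - m_s\big)\rd s .
\eeq
The initial term vanishes since $\mu_0 = \td\mu_0$ by construction. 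The martingale term is bounded with overwhelming probability using the Burkholder–Davis–Gundy inequality together with the bound on its bracket, which by \eqref{e:propzt} (with $p=2$) is $O\big(\tfrac{1}{\beta N^3 \Im[z_t]}\big)$ along the relevant window — giving a fluctuation of size $\ll \tfrac{\sqrt{\log N}}{N\sqrt{\Im[w]}}\cdot(\text{something}) \ll \tfrac{M}{N\Im[w]}$ on the domain $\dom_t$ where $\Im[w] \gtrsim \tfrac{M\log N}{N\Im[m]}$. The feedback term is where the characteristic flow pays off: using \eqref{e:msmtbound} one shows $\Im[z_s] \gtrsim e^{-Cs}\Im[z_t]$, so the feedback integrand stays inside $\dom_s$, hence is $\leq M/(N\Im[z_s])$ by the a priori bound, and $\int_0^t \tfrac{\rd s}{\Im[z_s]}$ is logarithmically bounded by \eqref{e:propzt} — so this contributes at most $C t (\log N) \cdot \tfrac{M}{N \Im[w]} \ll \tfrac{M}{N\Im[w]}$ since $t \ll T = (\log N)^{-2}$. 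Combining, on $[0,\tau]$ the right-hand side is strictly smaller than $\tfrac{M}{N\Im[w]}$ with overwhelming probability, forcing $\tau \geq T$, which is the claim.

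**The main obstacle I anticipate is controlling the feedback term with the correct (critical) power of $\Im[w]$**, i.e. making sure the map "$\td m_s - m_s$ small on $\dom_s$" $\Rightarrow$ "$\int g\,\rd(\td\mu_s-\mu_s)$ small" does not lose more than a constant, and that the resulting Gronwall-type inequality closes. The subtlety is that the Helffer–Sjöstrand representation $\int \del_{\bar w}\td g(z,w) m_s(w)\rd^2 w$ integrates $\td m_s - m_s$ against $\del_{\bar w}\td g$ over a region of $w$ with $\Im[w]$ ranging down to the cutoff scale; one must check using part (2) of Proposition \ref{e:gbound} (that $|\del_x^2 g| \leq C|z-x|^{-1}$ and $|\Im[g]| \leq C\Im[z]$) that this integral is bounded by $C \int_0^1 |\td m_s(x+\ri y) - m_s(x+\ri y)|\,\rd x\,\rd y$ uniformly, i.e. by $C\cdot \tfrac{M}{N}\cdot(\text{logarithm})$, and in particular has no bad $\Im[w]^{-1}$ blow-up — so it is in fact a lower-order contribution and does not even enter the Gronwall exponent dangerously. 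A secondary technical point is the short-time control of $\Im[z_t]$ staying away from $0$ on $\dom_t$ so that the characteristic does not exit $\bC_+$ before time $t$; this follows from \eqref{e:ztbound} and the definition of $\dom_t$, but needs to be checked carefully to ensure the whole window $[0,t]$ of integration is legitimate. Once these are in place, the estimate \eqref{e:diffmm} follows by the continuity argument.
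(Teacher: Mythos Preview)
Your overall architecture is correct and matches the paper: derive the SDE for $\td m_t$, pass to characteristics $z_s(u)$, define a stopping time where \eqref{e:diffmm} first fails, control the martingale via BDG and Proposition~\ref{p:propzt}, and close by continuity on a lattice. Your treatment of the $g$-integral is also right and matches Proposition~\ref{prop:HFbound}: that term is $O(M(\log N)^2/N)$ uniformly in $\Im[z_s]$ and is indeed lower order.

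The gap is in the main feedback term. The transport term in the SDE for $\td m_s$ is $\del_z\td m_s\,(\td m_s + V'/2)$, while the characteristic speed is $-(m_s + V'/2)$; these do \emph{not} cancel exactly. The residual is
\[
\bigl(\td m_s(z_s) - m_s(z_s)\bigr)\,\del_z\Bigl(\td m_s(z_s) + \tfrac{V'(z_s)}{2}\Bigr),
\]
the first term of $\cE_1$ in \eqref{defcE1}. Your bound on the ``feedback'' effectively treats the coefficient $\del_z\td m_s$ as $O(1)$; in fact $|\del_z\td m_s(z_s)| \lesssim \Im[m_s(z_s)]/\Im[z_s]$, and inserting the a priori bound $|\td m_s - m_s|\leq M/(N\Im[z_s])$ into the integrand gives, via \eqref{e:propzt} with $p=2$, a contribution of order $M/(N\Im[z_t])$ --- the \emph{same} order as the target. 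The bootstrap does not close this way, and no factor of $t$ is gained.

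The resolution in the paper is to run Gronwall with coefficient $\beta_s = (1+\tfrac{1}{\log N})\Im[m_s(z_s)]/\Im[z_s] + C$ and with forcing equal to the martingale, the It\^o correction, and the $g$-integral, which together are $O\bigl((\log N)^{1+\delta}/(N\Im[z_s])\bigr) + O\bigl(sM(\log N)^2/N\bigr)$. The crucial estimate (from \eqref{e:propzt} with $p=1$ and \eqref{e:z0ztbound}) is
\[
e^{\int_s^{t}\beta_\tau\,\rd\tau} \leq C\,\frac{\Im[z_s]}{\Im[z_t]},
\]
which cancels one power of $1/\Im[z_s]$ in the forcing; the Gronwall integral then reduces to $\frac{C(\log N)^{1+\delta}}{N\Im[z_t]}\int_0^t\beta_s\,\rd s = O\bigl((\log N)^{2+\delta}/(N\Im[z_t])\bigr) = o\bigl(M/(N\Im[z_t])\bigr)$ since $M=(\log N)^{2+2\delta}$. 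The $g$-integral forcing survives Gronwall because of the factor $t\ll(\log N)^{-2}$. In short, the obstacle you anticipated is real, but it lives in the $(\td m_s - m_s)\del_z\td m_s$ term rather than in the Helffer--Sj\"ostrand integral, and its resolution is this exact cancellation between the Gronwall exponential and $\Im[z_s]$.
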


The following rigidity estimates are a consequence of the local law.
\begin{corollary}\label{c:eigloc}
Under the assumptions of Theorem \ref{t:rigidity}. Fix time $T=(\log N)^{-2}$. With overwhelming probability, uniformly for any $0\leq t\leq T$ and $i\in \qq{1, N}$, we have
\beq\label{e:eigloc}
\gamma_{i-CM\log N}(t)-N^{-\fc+1}\leq \lambda_i(t) \leq \gamma_{i+CM\log N}(t)+N^{-\fc+1}
\eeq
where $\fc$ is any large constant, $\gamma_i(t)$ is the classical particle location at time $t$,
\beq\label{e:classlocation}
\gamma_i(t)=\sup_{x}\left\{\int_{-\infty}^x \rd \mu_t(x)\geq \frac{i}{N}\right\},\quad i\in\qq{1,N}. 
\eeq
We make the convention that $\gamma_i(t)=-\infty$ if $i<0$, and $\gamma_i(t)=+\infty$ if $i>N$.
\end{corollary}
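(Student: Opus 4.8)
The plan is to deduce the eigenvalue-location bounds from the local law Theorem \ref{t:rigidity} by the now-standard argument converting Stieltjes transform control into counting-function control. First I would introduce the empirical counting function $n(E,t) = \frac{1}{N}\#\{i: \lambda_i(t)\leq E\}$ and the deterministic counting function $n_t(E) = \mu_t((-\infty,E])$, so that the claim \eqref{e:eigloc} is equivalent to the estimate $|n(E,t)-n_t(E)|\leq CM\log N / N$ for all $E$ in the relevant range, uniformly in $0\leq t\leq T$ (after also handling the $N^{-\fc+1}$ correction that comes from the $N^{-\fc}$ floor in the definition of $\dom_t$). The link between counting functions and Stieltjes transforms is the Helffer--Sj\"ostrand representation: for a suitable cutoff one writes $n(E,t)-n_t(E)$ as a contour integral of $\td m_t - m_t$ over a region with imaginary part ranging from the scale $\eta \asymp M\log N/(N\,\Im[m])$ (the smallest scale on which Theorem \ref{t:rigidity} is valid) up to order one, plus boundary terms.

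The key steps, in order, would be: (i) fix $E$ with $|E|\leq 3\fb - t$ and apply Theorem \ref{t:rigidity} at the point $w = E + \ri\eta$ for $\eta$ in the admissible range, noting that $\Im[w]\geq \eta_*(E,t)$ where $\eta_*$ is the lower edge of $\dom_t$; (ii) integrate the local law against a smoothed indicator of $(-\infty,E]$ using the Helffer--Sj\"ostrand formula, splitting the $\eta$-integral at the threshold scale and bounding the small-$\eta$ part by the trivial estimate $\Im[\td m_t(E+\ri\eta)]\leq \Im[m_t(E+\ri\eta)] + M/(N\eta)$ together with $\Im[m_t]\cdot\eta\leq 1$; (iii) conclude $|n(E,t) - n_t(E)|\leq CM\log N/N + C/N^{\fc}$ for a single fixed $E$, with overwhelming probability; (iv) upgrade to a uniform-in-$E$ statement by a union bound over an $N^{-10}$-net of energies together with the monotonicity of $n(E,t)$ and the Lipschitz continuity of $n_t$; (v) upgrade to uniform-in-$t$ via a union bound over a fine net of times in $[0,T]$, using continuity of $t\mapsto\lambda_i(t)$ and of $t\mapsto\mu_t$ in $C([0,T],M_1(\bR))$ from Theorem \ref{thm:limitm}; (vi) invert the counting-function estimate to obtain \eqref{e:eigloc}: if $\lambda_i(t) > \gamma_{i+CM\log N}(t) + N^{-\fc+1}$ then $n(\gamma_{i+CM\log N}(t)+N^{-\fc+1},\,t) < (i+CM\log N)/N$ while $n_t$ at that point is at least $(i+CM\log N)/N - C N^{-\fc}\cdot\|\rho_t\|_\infty$ (using that $\mu_t$ has bounded density near its bulk, or more carefully just the definition of $\gamma$), contradicting the counting bound; the lower bound is symmetric.

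The main obstacle I anticipate is step (ii)–(iii): controlling the Helffer--Sj\"ostrand integral down to the very edge of the spectral domain $\dom_t$, because there the error bound $M/(N\Im[w])$ is only barely smaller than $\Im[m_t(w)]$ itself, so the naive bound on the small-$\eta$ contribution is not obviously summable. The standard resolution is that one does not need the local law below the threshold scale $\eta_*$: for $\eta < \eta_*$ one uses only the deterministic bound $\int_{0}^{\eta_*}\Im[m_t(E+\ri y)]\,dy \lesssim \eta_*\log(1/\eta_*)$-type estimates plus the fact that $\td m_t$ has the same crude a priori bounds, which contributes only $O(M\log N/N)$; the delicate point is that $\eta_*(E,t)$ itself depends on $\Im[m_t(E+\ri\eta_*)]$, so one must argue (as in the one-cut $\beta$-ensemble rigidity literature, e.g. \cite{MR3192527,multicutRig}) that this implicit definition still yields $\eta_* \lesssim M\log N/(N\cdot\min(1,\mathrm{dist}\ \mathrm{to\ edge}))$ in the bulk and $\eta_*\lesssim (M\log N/N)^{2/3}$ near a soft edge, which is exactly the range in which the counting-function error of $O(M\log N/N)$ in \eqref{e:eigloc} is the right answer. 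A second, more routine obstacle is bookkeeping the factors of $e^{Kt}$ in $\dom_t$, but since $t\leq T = (\log N)^{-2} = o(1)$ these are $1+o(1)$ and can be absorbed into the constant $C$.
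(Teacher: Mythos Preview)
Your plan is essentially the paper's: Helffer--Sj\"ostrand plus the local law gives a counting-function bound, which is then inverted. Two simplifications relative to your outline. First, steps (iv)--(v) are unnecessary: Theorem \ref{t:rigidity} already holds uniformly in $(t,w)$ on a single overwhelming-probability event (this is how the stopping-time proof is set up), so once you condition on that event the Helffer--Sj\"ostrand computation is deterministic. Second, your anticipated edge-behavior input is not needed and would not be available here anyway, since nothing is assumed about the profile of $\mu_0$ near its edge. The paper sidesteps this by choosing the smoothing scale of the test function to be the self-consistent $\td\eta\deq\inf\bigl\{\eta\geq e^{Kt}N^{-\fc+1}:\max_{E_0\leq x\leq E_0+\eta}\eta(x)\leq\eta\bigr\}$ with $\eta(\cdot)$ as in Remark \ref{r:imagpart}: either $\td\eta$ hits the floor, in which case the $N^{-\fc+1}$ slack in \eqref{e:eigloc} absorbs the smoothing, or the defining relation $\td\eta\,\Im[m_t(\td E+\ri\td\eta)]=e^{Kt}M\log N/N$ directly gives $\mu_t([E_0,E_0+\td\eta])\leq 2e^{Kt}M\log N/N$. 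No square-root or any other edge profile is invoked; the conclusion \eqref{e:eigloc} is phrased in terms of $\gamma_{i\pm CM\log N}(t)$, which may be far apart near the edge, and that is all the corollary claims.
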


We will prove Theorem \ref{t:rigidity} at the end of Section \ref{sec:locallawproof}.  The proof of Corollary \ref{c:eigloc} is standard and is given in Section \ref{sec:corproof}.

\begin{remark}\label{r:imagpart}
Notice that 
\begin{align*}
\eta\mapsto\eta\Im[m_t(E+\ri\eta)]=\int_{\bR}\frac{\eta^2\rd\mu_t(x)}{(E-x)^2+\eta^2},
\end{align*}
 is a monotonically increasing function. Similarly, $\eta\mapsto\eta\Im[\tilde m_t(E+\ri\eta)]$ is monotonic.
 
 We now prove the following deterministic fact.  Suppose that the estimate \eqref{e:diffmm} holds on $\dom_t$.  We claim that under this assumption the estimate
 \beq\label{e:imagbound}
|\Im[\td m_t(w)]- \Im[m_t(w)]|\leq \frac{3e^{Kt}M\log N}{N\Im[w]}.
\eeq
holds on the larger domain $ w = E + \i \eta$ with $|E| \leq 3 \fb - t $ and $e^{ K t } N^{- \fc} \leq \eta \leq 3 \fb - t$. 
 
Let
\beq\label{defetax}
\eta(E)=\inf_{\eta\geq 0}\{\eta\Im[m_t(E+\ri\eta)]\geq e^{Kt}M\log N/N\}.
\eeq
By the assumption \eqref{e:diffmm} and the definition of $\dom_t$ we only need to check the case that $ \eta (E) > e^{ Kt } N^{ - \fc}$ and $\eta < \eta (E)$.
In this case we have  $\eta(E)\Im[m_t(E+\ri\eta(E))]=e^{Kt}M\log N/N$,
and $\Im[m_t(w)]\leq e^{Kt}M\log N/N\eta$, and so
\begin{align*}
|\Im[\td m_t(w)]- \Im[m_t(w)]|
\leq& \Im[\td m_t(w)]+\frac{e^{Kt}M\log N}{N\eta}\\
\leq& \frac{\eta(E)}{\eta}\Im[\td m_t(E+\ri\eta(E))]+\frac{e^{Kt}M\log N}{N\eta}\\
\leq& \frac{\eta(E)}{\eta}\left|\Im[\td m_t(E+\ri\eta(E))]-\Im[m_t(E+\ri \eta(E)]\right|+\frac{2e^{Kt}M\log N}{N\eta}\\
\leq& \frac{3e^{Kt}M\log N}{N\eta}.
\end{align*}
In the second inequality we used monotonicity of $\eta\Im[\tilde m_t(E+\ri\eta)]$.
\end{remark}

\subsection{Properties of the spectral domain}
In this section, we prove some properties of the spectral domain $\cal D_s$ as defined in \eqref{def:dom}, which will be used throughout the proof of Theorem \ref{t:rigidity}.

\begin{lemma}
Suppose $V$ satisfies Assumption \ref{a:asumpV}. Fix a time $T$. For any $0\leq t\leq T$ such that $z_t\in \dom_t$, we have
\beq\label{e:z0ztbound}
\Im[u] \leq C N \Im[z_t(u)].
\eeq
\end{lemma}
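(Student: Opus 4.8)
The plan is to show that along the backward characteristic flow from time $t$ to time $0$, the imaginary part $\Im[z_s(u)]$ cannot grow too fast, so that $\Im[u] = \Im[z_0]$ is controlled by $\Im[z_t]$ times a constant; then the factor of $N$ in the claim comes entirely from the lower bound on $\Im[z_t]$ built into the definition of $\dom_t$, namely $\Im[z_t] \geq e^{Kt}/N^{\fc} \geq e^{Kt}/N$ (using $\fc \geq 1$). The point is that $CN\Im[z_t] \geq C e^{Kt}$ already, while the characteristic bounds give $\Im[u] \leq e^{Ct}\Im[z_t] + (\text{something bounded})$, and since $t \leq T$ is order $1$ (indeed $T \to 0$), the bounded error term is $O(1)$, hence absorbed into $CN\Im[z_t]$.

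More concretely, I would first invoke estimate \eqref{e:ztbound} from Proposition \ref{p:propzt}. Rearranged for the backward direction, it gives
\beq
\Im[z_0] \leq e^{Ct}\Im[z_t] + (e^{Ct}-1)\Im[m_0(z_0)] \cdot (\text{const}),
\eeq
or more directly from the lower bound half of \eqref{e:ztbound}, $e^{-Ct}(\Im[z_0] - \frac{e^{Ct}-1}{C}\Im[m_0(z_0)]) \leq \Im[z_t]$, so that
\beq
\Im[z_0] \leq e^{Ct}\Im[z_t] + \frac{e^{Ct}-1}{C}\Im[m_0(z_0)].
\eeq
Next I would bound $\Im[m_0(z_0)]$. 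One option: the general bound $\Im[m_0(z)]\Im[z] \leq 1$ from \eqref{dermtbound}, which gives $\Im[m_0(z_0)] \leq 1/\Im[z_0]$; but that only helps when $\Im[z_0]$ is not too small. A cleaner route is to use \eqref{e:mtbound}, $\Im[m_0(z_0)] \leq e^{Ct}\Im[m_t(z_t)]$, together with $\Im[m_t(z_t)]\Im[z_t] \leq 1$, giving $\Im[m_0(z_0)] \leq e^{Ct}/\Im[z_t]$. Then $\Im[z_0] \leq e^{Ct}\Im[z_t] + \frac{e^{Ct}(e^{Ct}-1)}{C\,\Im[z_t]}$. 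Since $z_t \in \dom_t$ forces $\Im[z_t] \geq e^{Kt}/N^{\fc} \geq 1/N$ (as $\fc \geq 1$, $K \geq 0$), we get $1/\Im[z_t] \leq N$, hence
\beq
\Im[z_0] \leq e^{CT}\Im[z_t] + \frac{e^{CT}(e^{CT}-1)}{C} N \leq e^{CT}\Im[z_t] + C' N \Im[z_t] \cdot N^{\fc-1}\cdots
\eeq
wait — I need to be careful: the term $\frac{1}{\Im[z_t]}$ is at most $N^{\fc}$, not $N$, so I should instead write $\frac{e^{CT}(e^{CT}-1)}{C\,\Im[z_t]} = \frac{e^{CT}(e^{CT}-1)}{C} \cdot \frac{1}{\Im[z_t]} \leq \frac{e^{CT}(e^{CT}-1)}{C} \cdot \frac{N\Im[z_t]}{\Im[z_t]^2} $, which is circular. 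The correct bookkeeping: $\frac{1}{\Im[z_t]} = N \cdot \frac{1}{N\Im[z_t]}$ and $N\Im[z_t] \geq N \cdot e^{Kt}/N^{\fc} = e^{Kt}N^{1-\fc}$, which is small when $\fc > 1$ — so this naive approach does not directly yield $\Im[z_0] \leq CN\Im[z_t]$. Instead I would use the \emph{first} alternative in the $\dom_t$ definition: $\Im[z_t] \geq \frac{e^{Kt}M\log N}{N\Im[m_t(z_t)]}$, equivalently $N\Im[z_t]\Im[m_t(z_t)] \geq e^{Kt}M\log N \geq 1$. Combining $\Im[m_0(z_0)] \leq e^{Ct}\Im[m_t(z_t)]$ with this gives $\Im[m_0(z_0)] \leq e^{Ct}\Im[m_t(z_t)] \leq e^{Ct}\cdot \frac{N\Im[z_t]\Im[m_t(z_t)]^2}{\cdots}$ — still needs care. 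The honest statement: $\frac{\Im[m_0(z_0)]}{\Im[z_t]} \leq \frac{e^{Ct}\Im[m_t(z_t)]}{\Im[z_t]}$, and since $N\Im[z_t]\Im[m_t(z_t)]\geq 1$, i.e. $\Im[m_t(z_t)] \geq \frac{1}{N\Im[z_t]}$ — that's a lower bound, the wrong direction. Rather, whichever of the two defining inequalities is active, one always has $\Im[z_t] \geq e^{Kt}/N^{\fc}$, and ALSO, when the first is active, $\Im[m_t(z_t)] \cdot N\Im[z_t] \geq e^{Kt}M\log N$.

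**The main obstacle**, then, is precisely this bookkeeping: producing the single factor of $N$ (not $N^{\fc}$). I expect the resolution is to combine \eqref{e:mtbound} and \eqref{dermtbound} as $\Im[m_0(z_0)] \leq e^{Ct}\Im[m_t(z_t)] \leq e^{Ct}/\Im[z_t]$, and then observe that $\frac{e^{Ct}-1}{C}\Im[m_0(z_0)] \leq \frac{e^{Ct}(e^{Ct}-1)}{C\Im[z_t]}$; now crucially $\frac{1}{\Im[z_t]} \leq \frac{N\Im[z_t]}{\Im[z_t] \cdot \Im[z_t]}$ is useless, but $\frac{1}{\Im[z_t]} = N\Im[z_t]\cdot\frac{1}{N\Im[z_t]^2}$ — I think the actual intended argument uses that $t \leq T = (\log N)^{-2} = o(1)$, so $e^{Ct}-1 = O(t) = o(1)$, hence $\frac{e^{Ct}-1}{C}\Im[m_0(z_0)] \leq o(1)\cdot \frac{e^{Ct}}{\Im[z_t]}$, and combined with $\Im[z_t]\geq e^{Kt}/N^{\fc}$... still $N^{\fc}$. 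The genuinely correct path: use $\Im[m_0(z_0)] \leq e^{Ct}\Im[m_t(z_t)]$ and, from the \emph{first} clause of $\dom_t$ (which we may assume active, since if $\Im[z_t]$ is so large the second clause is active, then $\Im[z_0] \leq e^{Ct}\Im[z_t] + O(1)$ trivially suffices as $\Im[z_t] \gtrsim N^{-\fc}$... no). I would therefore structure the proof by \emph{cases on which clause of \eqref{def:dom} binds}, and in the binding-first-clause case write $\Im[m_t(z_t)] \leq \frac{1}{\Im[z_t]}$ and $N\Im[z_t] \geq \frac{e^{Kt}M\log N}{\Im[m_t(z_t)]} \geq e^{Kt}M\log N \cdot \Im[z_t]$ giving $N \geq e^{Kt}M\log N$ trivially — and then combine all the exponential-in-$t$ constants (bounded by $e^{CT} = 1 + o(1)$ since $T\to 0$) to conclude $\Im[u] = \Im[z_0] \leq 2e^{Ct}\Im[z_t] \cdot (N\Im[z_t]\Im[m_t(z_t)]) / (\cdots)$. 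Given the delicacy, in the writeup I would lean on the precise constants in the earlier propositions and simply present: from \eqref{e:ztbound}, $\Im[z_0] \leq e^{Ct}\Im[z_t] + \frac{e^{2Ct}}{C\Im[z_t]}$; from $z_t \in \dom_t$, $\Im[z_t] \geq e^{Kt}/N^{\fc}$ is not enough but the \emph{first} clause yields, after multiplying through, $N\Im[z_t] \geq e^{Kt}M\log N/\Im[m_t(z_t)] \geq e^{Kt}M\log N\,\Im[z_t]/(\Im[m_t(z_t)]\Im[z_t]) \geq e^{Kt}M\log N\,\Im[z_t]$; thus $\frac{1}{\Im[z_t]^2} \leq \frac{N}{e^{Kt}M\log N\,\Im[z_t]} \leq \frac{N}{\Im[z_t]}$, hence $\frac{1}{\Im[z_t]} \leq \frac{N\Im[z_t]}{\Im[z_t]} \cdot \frac{1}{N\Im[z_t]} \cdot N$ — I will settle this algebra carefully in the final draft, but the conceptual content is: \emph{backward characteristic contraction of $\Im$ (Proposition \ref{p:propzt}) plus the $N^{-1}$-type lower bound on $\Im[z_t]$ encoded in $\dom_t$}, and the constant $C$ absorbs all $e^{O(T)}$ factors since $T = (\log N)^{-2} = o(1)$.
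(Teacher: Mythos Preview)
Your approach has a genuine gap. You route the argument through \eqref{e:ztbound}, obtaining
\[
\Im[u]\;\leq\; e^{Ct}\Im[z_t]\;+\;\frac{e^{Ct}-1}{C}\,\Im[m_0(u)],
\]
and then try to absorb the additive term into $CN\Im[z_t]$. But the only upper bound you produce for $\Im[m_0(u)]$ is $\Im[m_0(u)]\leq e^{Ct}/\Im[z_t]$, and asking $1/\Im[z_t]\leq C'N\Im[z_t]$ is the same as $\Im[z_t]\gtrsim N^{-1/2}$, which the domain does \emph{not} guarantee (it only gives $\Im[z_t]\geq e^{Kt}N^{-\fc}$). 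Your repeated algebraic rewrites of $1/\Im[z_t]$ as $N\Im[z_t]\cdot(\cdots)$ never close this gap; the obstruction is structural, not a matter of bookkeeping. You also misread the symbol $\vee$ in \eqref{def:dom}: it is a maximum, so \emph{both} lower bounds on $\Im[w]$ are always in force, and in particular the inequality $N\Im[z_t]\Im[m_t(z_t)]\geq e^{Kt}M\log N\geq 1$ is always available --- there is no case split.

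The paper bypasses \eqref{e:ztbound} entirely and chains three elementary inequalities. From \eqref{dermtbound} applied at time $0$ one has $\Im[u]\leq 1/\Im[m_0(u)]$; from \eqref{e:mtbound} one has $\Im[m_t(z_t)]\leq e^{Ct}\Im[m_0(u)]$, hence $1/\Im[m_0(u)]\leq e^{Ct}/\Im[m_t(z_t)]$; and from the first clause of $\dom_t$ one has $1/\Im[m_t(z_t)]\leq N\Im[z_t]$. Concatenating gives $\Im[u]\leq e^{Ct}N\Im[z_t]$, which is the claim. You in fact list all three of these ingredients in your discussion, but you keep feeding them into the additive estimate from \eqref{e:ztbound} instead of chaining them directly; the point is that $\Im[u]\leq 1/\Im[m_0(u)]$ should be the \emph{starting} inequality, not a tool for bounding the error term.
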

\begin{proof}
Notice that from \eqref{dermtbound}, $\Im[u]\leq \Im[m_0(u)]^{-1}$, and by our assumption $z_t\in \dom_t$ and \eqref{e:mtbound},
\beq
\Im[z_t(u)]\geq (N\Im[m_t(z_t)])^{-1}\geq (CN\Im[m_0(z_0)])^{-1}\geq (CN)^{-1}\Im[u],
\eeq
which yields \eqref{e:z0ztbound}.
\end{proof}


\begin{proposition}\label{p:dom}
Suppose $V$ satisfies the Assumption \ref{a:asumpV}. Fix  time $T$. If for some $t\in [0,T]$, $z_t\in \dom_t$, then for any $s\in [0, t]$, $z_s\in \dom_s$.
\end{proposition}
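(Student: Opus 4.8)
Fix a characteristic, i.e.\ fix $u\in\bC_+$ and write $z_\tau=z_\tau(u)$; assume $z_t\in\dom_t$, and verify, for each $s\in[0,t]$, the three defining inequalities of $\dom_s$. Since $z_t\in\dom_t\subset\bC_+$ we have $\Im[z_t]>0$, so by \eqref{e:msmtbound} the characteristic remains in $\bC_+$ on all of $[0,t]$ and every estimate of Proposition \ref{p:propzt} applies on each subinterval $[s,t]$.

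The two lower constraints, $\Im[z_s]\ge e^{Ks}/N^{\fc}$ and $\Im[z_s]\,\Im[m_s(z_s)]\ge e^{Ks}M\log N/N$, propagate backward in time. Indeed, \eqref{e:msmtbound} gives $\Im[z_s]\ge e^{-C(t-s)}\Im[z_t]$, and the differential inequality \eqref{e:dmt} together with Gronwall on $[s,t]$ gives $\Im[m_s(z_s)]\ge e^{-C(t-s)}\Im[m_t(z_t)]$. Since $z_t\in\dom_t$, we have $\Im[z_t]\ge e^{Kt}/N^{\fc}$ and $\Im[z_t]\,\Im[m_t(z_t)]\ge e^{Kt}M\log N/N$; multiplying and using $s\le t$ then yields the two claimed bounds at time $s$, provided the constant $K$ in the definition of $\dom_s$ satisfies $K\ge 2C$, where $C=C(V,\fb)$ is the constant of Proposition \ref{p:propzt}. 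This we assume throughout.

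The two ``box'' constraints $\Im[z_s]\le 3\fb-s$ and $|\Re[z_s]|\le 3\fb-s$ require a separate argument, since Proposition \ref{p:propzt} controls $\Im[z_s]$ only from below. The key point is the cut-off structure of $V'$: by \eqref{def:V'}, $V'(z)=0$ whenever $\Im[z]\ge 2\fb$ or $|\Re[z]|\ge 2\fb$, so near the boundary of the box of $\dom_s$ — which lies at distance $3\fb-s\ge 2\fb$ from the origin in each coordinate, as $s\le T\ll1<\fb$ — the characteristic equation \eqref{def:zt} reduces to $\del_\tau z_\tau=-m_\tau(z_\tau)$. Moreover, in the regime $\Im[z_\tau]\ge 2\fb$ one has $\del_\tau\Im[z_\tau]=-\Im[m_\tau(z_\tau)]\in[-1/(2\fb),0]$ by \eqref{dermtbound}, and in the regime $|\Re[z_\tau]|\ge 2\fb$, since $\supp\mu_\tau\subseteq[-\fb,\fb]$, the point $z_\tau$ lies at real-distance $\ge\fb$ from $\supp\mu_\tau$, so $|\del_\tau\Re[z_\tau]|=|\Re[m_\tau(z_\tau)]|\le 1/\fb$. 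To deduce $\Im[z_s]\le 3\fb-s$: if $\Im[z_s]\le 2\fb$ this is immediate since $s<\fb$; otherwise let $\tau_1$ be the first time in $[s,t]$ at which $\Im[z_\tau]$ equals $2\fb$ (or $\tau_1=t$ if there is none), so that $\Im[z_\tau]>2\fb$ on $[s,\tau_1)$ and $\Im[z_{\tau_1}]\le\max(2\fb,\Im[z_t])\le 3\fb-t$, and integrating the bound on $\del_\tau\Im[z_\tau]$ over $[s,\tau_1]$ gives $\Im[z_s]\le\Im[z_{\tau_1}]+(\tau_1-s)/(2\fb)\le(3\fb-t)+(t-s)/(2\fb)\le 3\fb-s$ (using $\fb\ge1/2$ and $s\le t$). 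The bound $|\Re[z_s]|\le 3\fb-s$ follows in exactly the same way, with $1/(2\fb)$ replaced by $1/\fb$ and using $\fb\ge1$; note that on $[s,\sigma_1)$ the sign of $\Re[z_\tau]$ is constant, so $|\Re[z_\tau]|$ is differentiable there.

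The only genuinely non-routine step is this last one: no a priori upper bound on $\Im[z_s]$ or $|\Re[z_s]|$ is available from the general flow estimates of Proposition \ref{p:propzt} (which propagate lower bounds on $\Im[z_s]$ backward, but not upper bounds), so one must exploit that $V'$ has been spatially truncated and that the term $m_\tau(z_\tau)$ is at most $1/\fb$ in modulus once $z_\tau$ is at distance $\ge 2\fb$ from $\supp\mu_\tau$. Everything else is bookkeeping, the one point to keep track of being that the constant $K$ in $\dom_s$ must be chosen large relative to the constant $C$ of Proposition \ref{p:propzt}.
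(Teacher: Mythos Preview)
Your proof is correct and follows essentially the same approach as the paper. The backward propagation of the two lower constraints is identical (the paper bundles them into a single differential inequality for the product $\Im[z_s]\Im[m_s(z_s)]$, but this is just bookkeeping), and your argument for the box constraints $\Im[z_s]\le 3\fb-s$, $|\Re[z_s]|\le 3\fb-s$ is a direct version of the paper's contradiction argument, relying on the same two facts: the spatial cutoff forces $V'(z_\tau)=0$ once a coordinate of $z_\tau$ exceeds $2\fb$, and then $|m_\tau(z_\tau)|\le 1/\fb$ since $z_\tau$ is at distance $\ge\fb$ from $\supp\mu_\tau\subseteq[-\fb,\fb]$.
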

\begin{proof}
By \eqref{e:msmtbound}, we have $\Im[z_s]\geq e^{-C(t-s)}\Im[z_t]$.  Therefore if we have $\Im[z_t]\geq e^{Kt}N^{-\fc}$ then $\Im[z_s]\geq e^{Ks}N^{-\fc}$ as long as we take $K\geq C$.

Combining $\del_s\Im[z_s]\leq C\Im[z_s]$ from \eqref{e:dImzbound}, with \eqref{e:dmt}, 
yields that there is a  constant $C'$ so that 
\beq
\del_s\left(\Im[z_s]\Im[m_s(z_s)]\right)\leq C'\Im[z_s]\Im[m_s(z_s)].
\eeq
Therefore,
$\Im[z_s]\Im[m_s(z_s)]\geq e^{-C'(t-s)}\Im[z_t]\Im[m_t(z_t)]$ and so if
$\Im[z_t]\Im [m_t(z_t)]\geq e^{Kt}M\log N/N$, then $\Im[z_s]\Im [m_s(z_s)]\geq e^{Ks}M\log N/N$, provided $K\geq C'$.

Finally, we must prove that if $\Im[z_t]\leq 3{\fb}-t$ and  $|\Re[z_t]|\leq 3\fb-t$, then for any $s\in [0,t]$, we have $\Im[z_s]\leq 3{\fb}-s$ and  $|\Re[z_s]|\leq 3\fb-s$. 
First, suppose for a contradiction that there exists $s$ such that, say, $|\Re[z_s]|> 3{\fb}-s$. By symmetry, say $\Re[z_s]> 3{\fb}-s$. Let $\tau=\inf_{\sigma\geq s}\{\Re[z_\sigma]\leq 3{\fb}-\sigma\}$; then $\tau\leq t$. For any $\sigma\in [s,\tau]$, $\Re[z_\sigma]\geq 3\fb -T\geq 2\fb$, we have $V'(z_\sigma)=0$, and therefore $|\del_\sigma z_{\sigma}|\leq |m_\sigma(z_{\sigma})|\leq \dist(z_\sigma, \supp \mu_\sigma)^{-1}$. Recall that we have chosen $\fb$ large so that, $\supp \mu_t\in [-\fb, \fb]$. Therefore $|\del_\sigma z_{\sigma}|\leq \fb^{-1}$ and $\Re[z_\tau]\geq \Re[z_s]-(\tau-s)/\fb> 3\fb-\tau$, as long as we take $\fb > 1$.  Therefore we derive a contradiction.  A similar argument applies to the case that 
 $\Im[z_s]> 3{\fb}-s$. This finishes the proof of Proposition \ref{p:dom}.
\end{proof}

We have the following weak control on the $C^1$ norm of the flow map $u\mapsto z_t(u)$. A much stronger version will be proved in Proposition \ref{prop:dzms}.
\begin{proposition}\label{prop:dztbound0}
Suppose $V$ satisfies the Assumption \ref{a:asumpV}. Fix time $T$. For any $0\leq t\leq T$ with $z_t\in \dom_t$, we have with $u = x + \i y$,
\beq\label{e:dztbound0}
|\del_x z_t(u)| + |\del_y z_t(u)|=\OO(N),
\eeq
where the implicit constant depends on $T$ and $V$.
\end{proposition}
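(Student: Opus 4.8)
The plan is to differentiate the characteristic equation \eqref{def:zt} in the initial condition $u$ and bound the resulting linear ODE for the derivative by Gronwall's inequality, using the already-established estimates of Proposition \ref{p:propzt} (in particular $|\partial_z m_s(z_s)| \leq \Im[m_s(z_s)]/\Im[z_s]$) together with Proposition \ref{p:dom} (which guarantees $z_s \in \dom_s$ for all $s \leq t$, so the denominators stay under control) and the $C^1$ bound $\|V'(z)\|_{C^1} \leq C$ from Proposition \ref{e:gbound}. Writing $\partial_u z_s := \partial_x z_s$ or $\partial_y z_s$ (the argument is identical for both), differentiating \eqref{def:zt} gives
\beq
\partial_s (\partial_u z_s) = -\left(\partial_z m_s(z_s) + \frac{\partial_z V'(z_s)}{2}\right)\partial_u z_s,
\eeq
with initial condition $\partial_u z_0 = 1$ (for $\partial_x$) or $\i$ (for $\partial_y$), in either case of modulus $1$. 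Note there is no $\partial_{\bar z}$ term because $m_s$ and $V'$ are quasi-analytic along the relevant region; more carefully, one tracks both $\partial_u z_s$ and $\partial_u \bar z_s$, but since $V'$ was constructed in \eqref{def:V'} to be quasi-analytic and $m_s$ is genuinely analytic on $\bC_+$, the $\partial_{\bar z}$ contributions vanish (or are negligible) on the domain in question.

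The key step is then to estimate $|\partial_s \log |\partial_u z_s||$. We have
\beq
\big|\partial_z m_s(z_s) + \tfrac12 \partial_z V'(z_s)\big| \leq \frac{\Im[m_s(z_s)]}{\Im[z_s]} + C,
\eeq
using \eqref{dermtbound} and Proposition \ref{e:gbound}. Integrating and using the first bound of \eqref{e:propzt}, namely $\int_0^t \Im[m_s(z_s)]\,\d s / \Im[z_s] \leq Ct + \log(\Im[z_0]/\Im[z_t])$, gives
\beq
\log |\partial_u z_t| \leq \int_0^t \left(\frac{\Im[m_s(z_s)]}{\Im[z_s]} + C\right)\d s \leq Ct + \log\frac{\Im[z_0]}{\Im[z_t]} + Ct \leq C' + \log\frac{\Im[u]}{\Im[z_t]}.
\eeq
Hence $|\partial_u z_t| \leq C' \Im[u]/\Im[z_t]$. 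Finally, since $z_t \in \dom_t$ forces $\Im[z_t] \geq (N \Im[m_t(z_t)])^{-1}$ and $\Im[u] \leq \Im[m_0(u)]^{-1} \leq 1$ (so in particular $\Im[u]/\Im[z_t] \lesssim N \Im[u]\Im[m_t(z_t)] \lesssim N$ after using \eqref{e:mtbound} to compare $\Im[m_t(z_t)]$ with $\Im[m_0(u)]$, exactly as in the proof of Lemma \eqref{e:z0ztbound}), we conclude $|\partial_u z_t| = \OO(N)$, which is \eqref{e:dztbound0}.

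I do not expect a serious obstacle here; the proposition is deliberately a crude bound (the authors flag that a sharper version comes later in Proposition \ref{prop:dzms}). The only mildly delicate point is the bookkeeping of the anti-holomorphic derivative $\partial_{\bar u} z_s$: one should write the variational system for the pair $(\partial_u z_s, \partial_u \bar z_s)$, observe that the coefficient matrix is upper-triangular up to terms controlled by $|\partial_{\bar z} V'|$ and $|\partial_{\bar z} m_s| = 0$, and check that this does not spoil the Gronwall argument. Once that is in place, everything reduces to plugging the bounds of Propositions \ref{p:propzt}, \ref{p:dom}, and \ref{e:gbound} into Gronwall's inequality.
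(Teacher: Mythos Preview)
Your proposal is correct and follows essentially the same route as the paper: differentiate the flow, bound the coefficient by $\Im[m_s(z_s)]/\Im[z_s] + C$, apply Gronwall with \eqref{e:propzt}, and finish with \eqref{e:z0ztbound}. One simplification the paper uses that you might adopt: rather than tracking a coupled system for $(\partial_x z_s, \partial_x \bar z_s)$, it differentiates $|\partial_x z_s|^2$ directly, so the $\partial_{\bar z} V'$ term (which does \emph{not} vanish---quasi-analyticity is not analyticity) is absorbed into the same Gronwall bound via $|\partial_x \bar z_s| = |\partial_x z_s|$, and no separate bookkeeping is needed.
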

\begin{proof}
From Proposition \ref{prop:invflow}, we know that $u\mapsto z_t(u)$ is a $C^1$ map. By differentiating both sides of \eqref{def:zt}, we get
\beq
\del_s\del_xz_s(u)=-\del_z m_s(z_s(u))\del_xz_s(u) -\frac{\del_z V'(z_s(u))\del_x z_s(u)+\del_{\bar z}V'(z_s(u))\del_x\bar z_s(u)}{2}.
\eeq
It follows that 
\begin{align}\begin{split}
&\del_s|\del_xz_s(u)|^2=2\Re[\del_s \del_x z_s(u) \del_x\bar z_s(u)]\\
=&-2\Re[\del_z m_s(z_s(u))]|\del_xz_s(u)|^2 -\Re[\del_z V'(z_s(u))|\del_x z_s(u)|^2+\del_{\bar z}V'(z_s(u))(\del_x\bar z_s(u))^2].
\end{split}\end{align}

By Proposition \ref{p:propzt} we have $|\del_z m_s(z_s(u))|\leq \Im[m_{s}(z_s(u))]/\Im[z_s(u)]$, and by Proposition \ref{e:gbound} we have  $|\del_{z}V'(z_s(u))|, |\del_{\bar z}V'(z_s(u))|=\OO(1)$.  Therefore,
\beq
\del_s|\del_xz_s(u)|^2\leq 2\left(\frac{\Im[m_s(z_s(u))]}{\Im[z_s(u)]}+C\right)|\del_xz_s(u)|^2. 
\eeq
Since $z_0(u)=u$, by Gronwall's inequality and \eqref{e:propzt} of Proposition \ref{p:propzt}, we have
\beq
|\del_xz_t(u)|^2\leq \exp\left( 2 \int_0^t \left(\frac{\Im[m_s(z_s(u))]}{\Im[z_s(u)]}+C\right)\rd s\right)\leq  \left( \frac{e^{Ct}\Im[u]}{\Im[z_t(u)]} \right)^2 \leq CN^2,
\eeq
where we used \eqref{e:z0ztbound}.
It follows that $|\del_x z_t(u)|=\OO(N)$. The estimate for $|\del_y z_t(u)|$ follows from the same argument. 
\end{proof}

We define the following lattice on the upper half plane $\bC_+$, 
\beq\label{def:L}
\cal L=\left\{E+\ri \eta\in \dom_0: E\in \bZ/ N^{(3\fc+1)}, \eta\in \bZ/N^{(3\fc+1)}\right\}.
\eeq

Thanks to Propositions \ref{prop:invflow} and \ref{prop:dztbound0}, we have the following.
\begin{proposition}\label{prop:continuity}
Suppose $V$ satisfies Assumption \ref{a:asumpV}. Fix a time $T$. For any $0\leq t\leq T$ and $w\in \dom_t$, there exists some lattice point $u\in \cal L\cap z_t^{-1}(\dom_t)$, such that
\beq
|z_t(u)-w|=\OO(N^{-3\fc}),
\eeq
where the implicit constant depends on $T$ and $V$.
\end{proposition}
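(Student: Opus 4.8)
The plan is to combine the inverse-flow map $y_s(v)$ constructed in Proposition~\ref{prop:invflow} with the quantitative Lipschitz bound of Proposition~\ref{prop:dztbound0} to pull back the target point $w$ to a point in $\bC_+$, round to the nearest lattice point of $\cal L$, and push forward again, tracking the error through the flow. Concretely, given $w \in \dom_t$, by Propositions~\ref{prop:invflow} and~\ref{p:dom} the preimage $v \deq z_t^{-1}(w) = y_t(w)$ lies in $\Omega_t \subset \bC_+$, and in fact lies in $\dom_0$: indeed $z_s(v) \in \dom_s$ for all $s \in [0,t]$ by Proposition~\ref{p:dom}, so in particular $z_0(v) = v \in \dom_0$. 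Now choose $u \in \cal L$ to be a lattice point with $|u - v| = \OO(N^{-(3\fc+1)})$; since the grid spacing of $\cal L$ is $N^{-(3\fc+1)}$ in each coordinate, such a $u$ exists, and by shrinking $\dom_0$ slightly (or noting that $v$ sits strictly inside $\dom_0$ when $w \in \dom_t$, using that $\dom_t$ has strict inequalities and $z_0(v)=v$ inherits a margin from the margin of $w$ in $\dom_t$ together with \eqref{e:msmtbound}) we may also ensure $u \in \dom_0$ and $u \in z_t^{-1}(\dom_t)$.

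Next I would estimate $|z_t(u) - w| = |z_t(u) - z_t(v)|$ using the $C^1$ bound \eqref{e:dztbound0}. Writing $u = x + \i y$, $v = x' + \i y'$ and integrating the derivative of $z_t$ along the segment joining $u$ to $v$ (which stays in $\dom_0$, up to the same shrinking as above, so that Proposition~\ref{prop:dztbound0} applies along the whole segment — here one must be a little careful that $z_s$ of every point on the segment stays in the upper half plane for $s \le t$, which again follows from Proposition~\ref{p:dom} once the segment is in $\dom_0$), we get
\beq
|z_t(u) - z_t(v)| \leq \sup_{\zeta \in [u,v]} \big( |\del_x z_t(\zeta)| + |\del_y z_t(\zeta)| \big) \cdot |u - v| = \OO(N) \cdot \OO(N^{-(3\fc+1)}) = \OO(N^{-3\fc}).
\eeq
This is exactly the claimed bound.

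The main obstacle is the bookkeeping needed to guarantee that the straight segment $[u,v]$ — and, more importantly, its images $z_s([u,v])$ for $0 \le s \le t$ — remain in a region where Proposition~\ref{prop:dztbound0} is valid, i.e. where $z_t$ is $C^1$ with the $\OO(N)$ derivative bound. The clean way to handle this is to observe that $v = z_t^{-1}(w)$ lies strictly in the interior of $\dom_0$ with a margin that is at least polynomially large in $1/N$ (coming from the $e^{Ks}M\log N / (N\Im[m_s(w)])$ and $e^{Ks}N^{-\fc}$ thresholds in the definition of $\dom_s$, transported by \eqref{e:mtbound} and \eqref{e:msmtbound}), while $|u-v| = \OO(N^{-(3\fc+1)})$ is a much smaller perturbation than that margin, so the entire segment stays inside $\dom_0$ and the flow $z_s$ of every point on it stays in $\bC_+$ for $s \le t$ by Proposition~\ref{p:dom}. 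Once this containment is in place, Proposition~\ref{prop:dztbound0} applies uniformly along the segment and the displayed integration estimate closes the argument. I would also remark that the exponent $3\fc$ here is matched to the choice of lattice spacing $N^{-(3\fc+1)}$ precisely so that after multiplying by the $\OO(N)$ Lipschitz constant one still gains a factor $N^{-3\fc}$, which is what later arguments (the union bound over $\cal L$) will need.
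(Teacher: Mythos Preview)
Your proposal is correct and follows exactly the approach the paper indicates: the paper gives no details beyond stating that the result follows from Propositions~\ref{prop:invflow} and~\ref{prop:dztbound0}, and your argument---pull back $w$ via the inverse flow to $v=z_t^{-1}(w)\in\dom_0$, round to a nearby lattice point $u\in\cal L$, and push forward using the $\OO(N)$ Lipschitz bound---is the natural and intended way to unpack that sentence. Your discussion of the bookkeeping (ensuring the segment $[u,v]$ and its flow stay in the relevant domains) is more careful than what the paper provides and correctly identifies the only subtlety.
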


\subsection{Proof of Theorem \ref{t:rigidity}} \label{sec:locallawproof}

In this section we prove \eqref{e:diffmm}. By Proposition \ref{prop:invflow}, the flow map $u\mapsto z_t(u)$ is a surjection from $\Omega_t$ (as defined in Proposition \ref{prop:invflow}) to the upper half plane $\bC_+$.   We want to prove the estimate
\begin{align}\label{e:conti}
\left|\td m_t(z_t)-m_t(z_t)\right|\leq \frac{M}{N\Im[z_t]},
\end{align}
for $z_t\in \dom_t$.

By Ito's formula, $\td m_s(z)$ satisfies the stochastic differential equation
\begin{align}\begin{split}\label{eq:dm}
\rd \td m_s(z)= -&\sqrt{\frac{2}{\beta N^3}}\sum_{i=1}^N \frac{{\rm d} B_i(s)}{(\la_i(s)-z)^2}+\td m_s(z)\del_z \td m_s(z)\rd s \\
+&\frac{1}{2N}\sum_{i=1}^{N}\frac{V'(\la_i(s))\rd t}{(\la_i(s)-z)^2}+\frac{2-\beta}{\beta N^2}\sum_{i=1}^{N}\frac{\rd s}{(\lambda_i(s)-z)^3}.
\end{split}\end{align}
%
We can rewrite \eqref{eq:dm} as 
\begin{align}\label{eq:dm3}\begin{split}
\rd \td m_s(z)=-&\sqrt{\frac{2}{\beta N^3}}\sum_{i=1}^N \frac{{\rm d} B_i(s)}{(\la_i(s)-z)^2}+\del_z \td m_s(z)\left(\td m_s(z)+\frac{V'(z)}{2}\right)+\frac{\td m_s(z)\del_z V'(z)}{2}\\+& \frac{1}{\pi}\int_{\bC} \del_{\bar w} \td g(z,w) \td m_s(w)\rd^2 w
+\frac{2-\beta}{\beta N^2}\sum_{i=1}^{N}\frac{\rd s}{(\la_i(s)-z)^3},
\end{split}
\end{align}
where $V'(z)$ and $\td g(z,w)$ are defined in \eqref{def:V'} and \eqref{def:tdg} respectively.
Plugging \eqref{def:zt} into \eqref{eq:dm3}, and by the chain rule,  we have 
\begin{align}\label{e:tdmzt}\begin{split}
\rd \td m_s(z_s)=-&\sqrt{\frac{2}{\beta N^3}}\sum_{i=1}^N \frac{{\rm d} B_i(s)}{(\la_i(s)-z_s)^2}+\del_z \td m_s(z_s)\left(\td m_s(z_s)-m_s(z_s)\right)+\frac{\td m_s(z_s)\del_zV'(z_s)}{2}\\
+ &\frac{1}{\pi}\int_{\bC} \del_{\bar w} \td g(z_s,w) \td m_s(w)\rd^2 w \rd t
+\frac{2-\beta}{\beta N^2}\sum_{i=1}^{N}\frac{\rd s}{(\la_i(s)-z_s)^3}.
\end{split}
\end{align}
It follows by taking the difference of \eqref{e:mtzt} and \eqref{e:tdmzt} that, 
\begin{align}\label{e:diffm}\begin{split}
\rd (\td m_s(z_s)-m_s(z_s))=-&\sqrt{\frac{2}{\beta N^3}}\sum_{i=1}^N \frac{{\rm d} B_i(s)}{(\la_i(s)-z_s)^2}+\left(\td m_s(z_s)-m_s(z_s)\right)\del_z \left(\td m_s(z_s)+\frac{V'(z_s)}{2}\right)\rd s\\
+& \frac{1}{\pi}\int_{\bC} \del_{\bar w} \td g(z_s,w) (\td m_s(w)-m_s(w))\rd^2 w\rd s
+\frac{2-\beta}{\beta N^2}\sum_{i=1}^{N}\frac{\rd s}{(\la_i(s)-z_s)^3}.
\end{split}
\end{align}
Using the fact that $m_0 (z) = \td m_0 (z)$, we can integrate both sides of \eqref{e:diffm} from $0$ to $t$ and obtain
\beq \label{eq:mzt}
\td m_t(z_t)-m_t(z_t)=\int_0^t \cE_1(s)\rd s+\rd \cE_2(s),
\eeq
where the error terms are
\begin{align}
\label{defcE1}\cE_1(s)=&\left(\td m_s(z_s)-m_s(z_s)\right)\del_z \left(\td m_s(z_s)+\frac{V'(z_s)}{2}\right)+
\frac{1}{\pi}\int_{\bC} \del_{\bar w} \td g(z_s,w) (\td m_s(w)-m_s(w))\rd^2 w , \\
\label{defcE2}\rd \cE_2(t)=&\frac{2-\beta}{\beta N^2}\frac{\rd s}{(\lambda_i(s)-z_s)^3}-\sqrt{\frac{2}{\beta N^3}}\sum_{i=1}^N \frac{{\rd} B_i(s)}{(\la_i(s)-z_s)^2}.
\end{align}
We remark that $\cal E_1$ and $\cal E_2$ implicitly depend on $u$, the initial value of the flow $z_s(u)$.  The local law will eventually follow from an application of Gronwall's inequality to \eqref{eq:mzt}.

We define the stopping time 
\beq\label{stoptime}
\sigma\deq \inf_{s\geq0}\left\{\exists w\in \dom_s: \left|\td m_s(w)-m_s(w)\right|\geq \frac{M}{N\Im[w]}\right\}\wedge t.
\eeq
In the rest of this section we prove that with overwhelming probability we have $\sigma=t$. Theorem \ref{t:rigidity} follows.

For any lattice point $u\in \cal L$ as in \eqref{def:L}, we denote 
\beq\label{def:tu}
t(u)= \sup_{s\geq0}\{ z_s(u)\in \dom_s\}\wedge t.
\eeq
By Proposition \ref{p:dom} we have that  $z_s(u)\in\dom_s$ for any $0\leq s\leq t(u)$. We decompose the time interval $[0,t(u)]$ in the following way.  First set $t_0=0$, and define
\beq
t_{i+1}(u):=\sup_{s\geq t_i(u)}\left\{ \Im[z_s(u)] \geq \frac{\Im[z_{t_i}(u)]}{2}\right\}\wedge t(u),\quad i=0,1,2,\cdots.
\eeq
By \eqref{e:z0ztbound}, there exists some constant $C$ depending on $V$, 
 such that $\Im[z_0(u)]\leq CN\Im[z_{t(u)}(u)]$, and thus the above sequence will terminate at some $t_k(u)=t(u)$ for $k=\OO(\log N)$ depending on $u$, the initial value of $z_s$. 
Moreover, by \eqref{e:msmtbound}, 
for any $t_i(u)\leq s_1\leq s_2\leq t_{i+1}(u)$,
\beq\label{e:Imzsztbound}
e^{-CT}\leq e^{-C(s_2-s_1)}\leq \frac{\Im[z_{s_1}(u)]}{\Im[z_{s_2}(u)]}\leq \frac{e^{C(s_1-t_i)}\Im[z_{t_{i}}(u)]}{e^{C(s_2-t_{i+1})}\Im[z_{t_{i+1}}(u)]}\leq 2e^{CT}.
\eeq

%
%
%
%
%
%

We first derive an estimate of $\int \rd \cE_2(s)$ in terms of $\{m_s(z_s(u)), 0\leq s\leq t(u)\}$. 

\begin{proposition}\label{p:error}
Under the assumptions of Theorem \ref{t:rigidity}. There exists an event $\Omega$ that holds with overwhelming probability on which we have for every $0 \leq \tau \leq t (u)$ and $u \in \cal L$,
\beq\label{eq:estet}
 \left|\int_0^{\tau\wedge \sigma} \rd \cE_2(s)\right|\leq \frac{C(\log N)^{1+\delta}}{N\Im[z_{\tau\wedge\sigma}(u)]}.
\eeq
\end{proposition}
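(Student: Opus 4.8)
The plan is to split $\rd\cE_2$ from \eqref{defcE2} into its finite--variation part $\frac{2-\beta}{\beta N^2}\sum_{i=1}^N(\la_i(s)-z_s)^{-3}\rd s$ and its martingale part $-\sqrt{2/(\beta N^3)}\sum_{i=1}^N(\la_i(s)-z_s)^{-2}\rd B_i(s)$, and to bound the two contributions separately. The key preliminary observation is that for $s\leq t(u)$ (see \eqref{def:tu}) one has $z_s\in\dom_s$, and that for $s<\sigma$ the definition \eqref{stoptime} of $\sigma$ gives $|\td m_s(z_s)-m_s(z_s)|\leq M/(N\Im[z_s])$, while the constraint $N\Im[w]\Im[m_s(w)]\geq M\log N$ built into $\dom_s$ in \eqref{def:dom} forces $M/(N\Im[z_s])\leq \Im[m_s(z_s)]/\log N$; hence $\Im[\td m_s(z_s)]\leq 2\Im[m_s(z_s)]$ path-wise for $s<\sigma$. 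Combined with $\sum_i|\la_i(s)-z_s|^{-2}=N\Im[\td m_s(z_s)]/\Im[z_s]$ and $|\la_i(s)-z_s|\geq\Im[z_s]$, this turns the two integrands into the bounds $\bigl|\sum_i(\la_i(s)-z_s)^{-3}\bigr|\leq 2N\Im[m_s(z_s)]/\Im[z_s]^2$ and $\sum_i|\la_i(s)-z_s|^{-4}\leq 2N\Im[m_s(z_s)]/\Im[z_s]^3$, for $s<\sigma$.

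For the finite--variation part this already suffices: integrating the first bound and using \eqref{e:propzt} with $p=2$ gives $\bigl|\int_0^{\tau\wedge\sigma}\frac{2-\beta}{\beta N^2}\sum_i(\la_i(s)-z_s)^{-3}\rd s\bigr|\leq \frac{C}{N}\int_0^{\tau\wedge\sigma}\frac{\Im[m_s(z_s)]}{\Im[z_s]^2}\rd s\leq \frac{C}{N\Im[z_{\tau\wedge\sigma}]}$, which is deterministic (no event needed) and already inside \eqref{eq:estet}. The martingale part $N_\tau\deq -\sqrt{2/(\beta N^3)}\int_0^{\tau\wedge\sigma}\sum_i(\la_i(s)-z_s)^{-2}\rd B_i(s)$ is the substance of the proposition. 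Writing $N=N^{\mathrm R}+\ri N^{\mathrm I}$, the brackets satisfy $\rd\langle N^{\mathrm R}\rangle_s,\,\rd\langle N^{\mathrm I}\rangle_s\leq \frac{2}{\beta N^3}\sum_i|\la_i(s)-z_s|^{-4}\rd s\leq \frac{4\Im[m_s(z_s)]}{\beta N^2\Im[z_s]^3}\rd s$ for $s<\sigma$. To control $\sup_\tau|N_\tau|$ uniformly in $\tau$ -- delicate because $\Im[z_\tau]$ sweeps all scales between $O(1)$ and $N^{-\fc}$ -- I use the dyadic decomposition $0=t_0<t_1<\cdots<t_k=t(u)$, $k=\OO(\log N)$, along which $\Im[z_s]\asymp\Im[z_{t_i}]$ on $[t_i,t_{i+1}]$ by \eqref{e:Imzsztbound} and $\Im[z_{t_{i+1}}]=\Im[z_{t_i}]/2$. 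On the $i$-th piece the bracket of the stopped martingale is bounded {\it deterministically} by $\frac{C}{N^2}\int_{t_i}^{t_{i+1}}\frac{\Im[m_s(z_s)]}{\Im[z_s]^3}\rd s\leq \frac{C}{N^2\Im[z_{t_{i+1}}]^2}$ (\eqref{e:propzt} with $p=3$); it is exactly the stopping at $\sigma$ that renders this bound deterministic. Applying the exponential inequality $\bP(\sup|X|\geq \lambda)\leq 2\exp(-\lambda^2/2K)$ for continuous martingales with $\langle X\rangle_\infty\leq K$ to $N^{\mathrm R},N^{\mathrm I}$ on each piece, with $\lambda$ a suitable power of $\log N$ (e.g. $(\log N)^{1/2+\delta}$ times the square root of the bracket bound), gives a superpolynomially small failure probability; a union bound over the $\OO(\log N)$ indices $i$ and the $N^{\OO(\fc)}$ points $u\in\cal L$ of \eqref{def:L} leaves an event $\Omega$ of overwhelming probability on which every piece obeys the bound. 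Summing the increments and using that $\sum_{j\leq i}\Im[z_{t_{j+1}}]^{-1}$ is a geometric series dominated by its last term $\asymp\Im[z_{t_i}]^{-1}\asymp\Im[z_{\tau\wedge\sigma}]^{-1}$, one gets $|N_{\tau\wedge\sigma}|\leq C(\log N)^{1/2+\delta}/(N\Im[z_{\tau\wedge\sigma}])$ on $\Omega$; adding the finite--variation estimate yields \eqref{eq:estet} (in fact with the smaller power $(\log N)^{1/2+\delta}$, which is all that is needed).

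The step I expect to be the main obstacle is this martingale estimate: a single application of Doob's or the Burkholder--Davis--Gundy inequality does not suffice, because \eqref{eq:estet} is required simultaneously for every $\tau\in[0,t(u)]$ and every $u\in\cal L$, whereas the target size $1/(N\Im[z_\tau])$ varies over a polynomial range of scales as $\tau$ increases. The remedy is to run the concentration estimate scale-by-scale along the dyadic times $t_i$, where the a priori bracket bounds supplied by the not-yet-failed local law together with the integrated characteristic estimates \eqref{e:propzt} keep each piece's quadratic variation comparable to $(N\Im[z_{t_i}])^{-2}$; the geometric decay of these scales then makes both the union bound and the summation of the increments affordable. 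The complex-valued nature of $N_\tau$ and the optional-stopping bookkeeping needed to keep the bracket bounds deterministic are routine but require a little care.
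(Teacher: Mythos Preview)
Your proposal is correct and follows essentially the same strategy as the paper: both use the stopping at $\sigma$ to turn $\Im[\td m_s(z_s)]\leq 2\Im[m_s(z_s)]$ into a deterministic bracket bound, invoke \eqref{e:propzt} to integrate, and run a sub-Gaussian martingale tail estimate along the dyadic times $t_i$ followed by a union bound over $\cal L$ and $i$. The only cosmetic differences are that the paper applies the concentration inequality to the full martingale on $[0,t_i\wedge\sigma]$ (then uses \eqref{e:Imzsztbound} to pass to intermediate $\tau$) rather than to the increments, and states the bound with exponent $(\log N)^{1+\delta}$ rather than your sharper $(\log N)^{1/2+\delta}$; neither choice is consequential.
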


\begin{proof}

For simplicity of notation, we write $t_i = t_i (u)$ and $z_s = z_s (u)$. For any $s\leq t_i$, by our choice of the stopping time $\sigma$ (as in \eqref{stoptime}), and the definition of domain $\dom_s$ (as in 
\eqref{def:dom}), we have
\beq\label{e:replacebound}
\Im[\td m_{s\wedge \sigma}(z_{s\wedge \sigma})]\leq 2\Im[m_{s\wedge \sigma}(z_{s\wedge \sigma})].
\eeq

 For the first term in \eqref{defcE2} we have
\begin{align}\begin{split}\label{eq:1term}
\sup_{0\leq \tau\leq t_i}\left|\frac{2-\beta}{\beta N^2}\int_0^{\tau\wedge\sigma}\sum_{i=1}^{N}\frac{\rd s}{(\lambda_i(s)-z_s)^3}\right|
\leq& \frac{2-\beta}{\beta N^2}\int_0^{t_i\wedge \sigma}\sum_{i=1}^{N}\frac{\rd s}{|\lambda_i(s)-z_s|^3}\\
\leq& \frac{C}{ N^2}\int_0^{t_i\wedge \sigma}\sum_{i=1}^{N}\frac{\rd s}{\Im[z_s]|\lambda_i(s)-z_s|^2}
= C \int_0^{t_i\wedge\sigma}\frac{\Im[\td m_s(z_s)]\rd s}{N\Im[z_s]^2}\\
\leq&  C \int_0^{t_i\wedge\sigma}\frac{2\Im[m_s(z_s)]\rd s}{N\Im[z_s]^2}
\leq \frac{C'}{\Im[z_{t_i\wedge \sigma}]},
\end{split}
\end{align}
 where we used \eqref{e:replacebound} and \eqref{e:propzt}.
 
For the second term in \eqref{defcE2} we have
\begin{align}\begin{split}
&\left\langle\sqrt{\frac{2}{\beta N^3}}\int_0^{\cdot\wedge\sigma}\sum_{i=1}^N \frac{{\rm d} B_i(s)}{(\la_i(s)-z_s)^2} \right\rangle_{t_i}
= \frac{2}{\beta N^{3/2}}\int_0^{t_i\wedge\sigma}\sum_{i=1}^N \frac{ \rd s}{|\la_i(s)-z_s|^4}\\
\leq& \frac{2}{\beta N^{3/2}}\int_0^{t_i\wedge\sigma}\sum_{i=1}^N \frac{ \rd s}{\Im[z_s]^2|\la_i(s)-z_s|^2}
=\frac{2}{\beta}\int_0^{t_i\wedge\sigma}\frac{\Im[\td m_s(z_s)]\rd s}{N^2\Im[z_s]^3}\\
\leq&  
\frac{2}{\beta}\int_0^{t_i\wedge\sigma}\frac{2\Im[m_s(z_s)]\rd s}{N^2\Im[z_s]^3}
\leq \frac{C}{N^2\Im[z_{t_i\wedge \sigma}]^2},
\end{split}
\end{align} 
again we used \eqref{e:replacebound} and \eqref{e:propzt}.
Therefore, by Burkholder-Davis-Gundy inequality, for any $u\in \cal L$ and $t_i$, the following holds with overwhelming probability, i.e., $1-C\exp\{ -c(\log N)^{1+\delta}\}$,
\begin{align}\begin{split}\label{eq:2term}
&\sup_{0\leq \tau\leq t_i}\left|\sqrt{\frac{2}{\beta N^3}}\int_0^{\tau\wedge\sigma}\sum_{i=1}^N \frac{{\rm d} B_i(s)}{(\la_i(s)-z_s)^2} \right|
\leq \frac{C(\log N)^{1+\delta}}{N\Im[z_{t_i\wedge \sigma}]}.
\end{split}
\end{align}
We define $\Omega$ to be the set of Brownian paths $\{B_1(s), \cdots, B_N(s)\}_{0\leq s\leq t}$ on which the following two estimates hold.  
\begin{enumerate}
\item First we have, $-\fb\leq \lambda_1(s)\leq \lambda_2(s)\leq \cdots \leq \la_N(s)\leq \fb$ uniformly for all $s\in [0,T]$.
\item Second for any $u\in \cal L$ and $i=1,2,\cdots, k$,  \eqref{eq:2term} holds,
\end{enumerate}

It follows from Proposition \ref{normbound} and the discussion above,  $\Omega$ holds with overwhelming probability, i.e., $\bP(\Omega)\geq 1-C|\cal L|(\log N)\exp\{ -c(\log N)^{1+\delta}\}$.

Therefore, for any $\tau\in[t_{i-1},t_i]$, the bounds \eqref{eq:1term} and \eqref{eq:2term} yield
\begin{align}\begin{split}\label{e:continuityarg}
\left|\int_0^{\tau\wedge \sigma} \rd \cE_2(s)\right|
\leq& \frac{C'(\log N)^{1+\delta}}{N\Im[z_{t_i\wedge\sigma}]}\leq  \frac{C(\log N)^{1+\delta}}{N\Im[z_{\tau\wedge\sigma}]},
\end{split}
\end{align}
 where we used our choice of $t_i$'s, i.e. \eqref{e:Imzsztbound}.
\end{proof}

We now bound the second term of \eqref{defcE1}. 
\begin{proposition}\label{prop:HFbound}
Under the assumptions of Theorem \eqref{t:rigidity}, for any $u\in \cal L$ and $s\in[0,t(u)]$ (as in \eqref{def:tu}), we have
\beq\label{e:HFbound}
\left|\frac{1}{\pi}\int_{\bC} \del_{\bar w} \td g(z_{s\wedge\sigma}(u),w) (\td m_{s\wedge \sigma}(w)-m_{s\wedge \sigma}(w))\rd^2 w\right|\leq \frac{CM(\log N)^{2}}{N},
\eeq
where the constant $C$ depends on $V$.
\end{proposition}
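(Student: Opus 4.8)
The plan is to apply the Helffer--Sj{\"o}strand formula in the reverse direction, collapsing the $\bC$-integral in \eqref{e:HFbound} back onto the real line, where it becomes an estimate for a linear statistic of a Lipschitz test function against the mean-zero signed measure $\td\mu_{\tau}-\mu_{\tau}$, $\tau:=s\wedge\sigma$; this is exactly what the local law controls. I work on the overwhelming-probability event of Proposition \ref{normbound} on which $\|\bm\la(r)\|_\infty\le\fb$ for all $r\in[0,T]$, so that $\td\mu_r:=\frac1N\sum_i\delta_{\la_i(r)}$ and $\mu_r$ are both supported in $[-\fb,\fb]$; I also abbreviate $z_\tau=z_\tau(u)$.

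The first step is the identity
\[
\frac1\pi\int_{\bC}\del_{\bar w}\td g(z_\tau,w)\,(\td m_\tau(w)-m_\tau(w))\,\rd^2 w=\int_{\bR}g(z_\tau,x)\,\rd(\td\mu_\tau-\mu_\tau)(x),
\]
obtained by applying to each of $\mu_\tau$ and $\td\mu_\tau$ the Helffer--Sj{\"o}strand identity used to pass from \eqref{e:MVeq} to \eqref{e:limitm}. For $\mu_\tau$ this is the identity recalled just before \eqref{e:limitm}; for the atomic measure $\td\mu_\tau$ it follows by linearity from the single point-mass (Cauchy--Pompeiu) case. The only point requiring care is that, since $V'$ is cut off, $\td g(z_\tau,\cdot)$ is compactly supported in $\Im w$ but decays only like $|\Re w|^{-1}$ at infinity: one checks that the boundary term at infinity in Green's identity still vanishes, and that the integrand is integrable near the real axis because by \eqref{def:tdg} one has $\del_{\bar w}\td g(z_\tau,w)=\OO(|\Im w|\,|\del_x^2 g(z_\tau,\Re w)|)$ away from $|\Im w|\asymp\fb$, with $|\del_x^2 g(z_\tau,\cdot)|\le C/\Im z_\tau$ by Proposition \ref{e:gbound}.

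Next, setting $\hat F_\tau(x):=\frac1N\#\{i:\la_i(\tau)\le x\}-\mu_\tau((-\infty,x])$, integration by parts gives
\[
\int_{\bR}g(z_\tau,x)\,\rd(\td\mu_\tau-\mu_\tau)(x)=-\int_{\bR}\del_x g(z_\tau,x)\,\hat F_\tau(x)\,\rd x,
\]
the boundary terms vanishing because $\hat F_\tau$ is supported in $[-\fb,\fb]$. Since $|\del_x g(z,x)|\le C$ uniformly (Proposition \ref{e:gbound}), this is at most $2\fb C\|\hat F_\tau\|_{L^\infty}$, so \eqref{e:HFbound} reduces to the rigidity-type bound $\|\hat F_\tau\|_{L^\infty}\le CM(\log N)^2/N$.

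That bound is the main step, but it is just the standard deduction of rigidity of the empirical counting function from a local law. By definition of $\sigma$, \eqref{e:diffmm} holds on $\dom_\tau$, so Remark \ref{r:imagpart} yields $|\Im\td m_\tau(E+\i\eta)-\Im m_\tau(E+\i\eta)|\le 3e^{K\tau}M\log N/(N\eta)$ for $|E|\le 3\fb-\tau$ and $e^{K\tau}N^{-\fc}\le\eta\le 3\fb-\tau$, with $e^{K\tau}=\OO(1)$ since $\tau\ll(\log N)^{-2}$. Integrating $\Im(\td m_\tau-m_\tau)(\cdot+\i\eta)$ against a mollified indicator of a half-line, and letting the mollification scale descend to $\eta=N^{-\fc}$ (so the unresolved contribution is $\OO(N^{-\fc+1})$ and negligible), gives $\|\hat F_\tau\|_{L^\infty}\le CM(\log N)^2/N$, exactly as in the proof of Corollary \ref{c:eigloc}; combined with the two displays above this proves \eqref{e:HFbound}. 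The only genuinely delicate point in the argument is this deconvolution near the spectral edge, where $\Im m_\tau$ is small and one must choose with care the scale at which $\hat F_\tau$ is read off --- but the cutoff scale $N^{-\fc}$ built into $\dom_\tau$ makes this routine, and no idea beyond the standard argument is needed.
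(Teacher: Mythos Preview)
Your approach is correct and takes a genuinely different route from the paper's. Both start from the Helffer--Sj{\"o}strand identity
\[
\frac{1}{\pi}\int_{\bC} \del_{\bar w} \td g(z_\tau,w)\,(\td m_\tau(w)-m_\tau(w))\,\rd^2 w
=\int_{\bR} g(z_\tau,x)\,\rd(\td\mu_\tau-\mu_\tau)(x),
\]
but diverge from there. The paper estimates the right-hand side \emph{directly} via the standard HS decomposition: one term supported where $\chi'(y)\neq 0$ (easy, since $|y|\asymp\fb$ lies in $\dom_\tau$), and an integral of $y\chi(y)\,|\del_x^2 g_1(z_\tau,x)|\,|\Im S(x+\ri y)|$, which it controls using the pointwise bound $|\del_x^2 g(z,x)|\le C/|z-x|$ together with Remark~\ref{r:imagpart}. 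The singularity $1/|z_\tau-x|$ integrated over $x$ is what produces the second $\log N$. You instead integrate by parts once in $x$ to reduce to $\int\del_x g(z_\tau,x)\,\hat F_\tau(x)\,\rd x$, use only the zeroth-order bound $|\del_x g|\le C$, and then import the counting-function argument (the proof of Corollary~\ref{c:eigloc}) to bound $\|\hat F_\tau\|_\infty$.

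What each approach buys: the paper's is shorter and self-contained, since it uses the local law once. Yours is more modular---it cleanly separates ``HS collapse'' from ``rigidity of the counting function''---and, in fact, slightly sharper: the argument of Corollary~\ref{c:eigloc} gives $\|\hat F_\tau\|_\infty\le CM\log N/N$, so your final bound is $CM\log N/N$, one $\log N$ better than \eqref{e:HFbound} (you stated $(\log N)^2$, but you did not need to). The price is that the rigidity argument itself is another HS estimate, so you are effectively running HS twice. Two points worth tightening: first, rather than arguing about the decay of $\td g(z_\tau,\cdot)$ at $|\Re w|\to\infty$, it is cleaner to replace $g$ by $g_1(z,x)=g(z,x)\chi(x)$ from the outset (as the paper does), which is harmless since both measures are supported in $[-\fb,\fb]$; second, be explicit that you are invoking only the \emph{argument} of Corollary~\ref{c:eigloc} applied at the stopped time $\tau=s\wedge\sigma$, with the local law supplied by the definition of $\sigma$, so there is no circularity with Theorem~\ref{t:rigidity}.
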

\begin{proof} 
First, we note that by Proposition \ref{normbound} we may replace $g$ by $g_1 (z, x) := g (z, x) \chi (x)$ and the quasi-analytic extension by $\td g_1 (z, x + \i y ) := ( g_1 (z, x) + \i y \del_x g_1 (z, x) ) \chi (y)$.

The proof follows the same argument as \cite[Lemma B.1]{MR2639734}.
Let $S(x+\ri y)=\td m_{s\wedge \sigma}(x+\ri y)-m_{s\wedge \sigma}(x+\ri y)$.
we have
\begin{align}\begin{split}\label{e:HF}
&\left|\frac{1}{\pi}\int_{\bC} \del_{\bar w} \td g_1(z_{s\wedge\sigma}(u),w) (\td m_{s\wedge \sigma}(w)-m_{s\wedge \sigma}(w))\rd^2 w\right|
=\left|\int_{\bR}  g_1(z_{s\wedge\sigma}(u),x) (\rd \tilde \mu_{s\wedge \sigma}(x)-\rd  \mu_{s\wedge \sigma}(x))\right|\\
\leq&
C\int_{\bC_+} (|g_1(z_{s\wedge\sigma},x)|+y|\del_xg_1(z_{s\wedge\sigma}(u),x)|)|\chi'(y)||S(x+\ri y)|\rd x\rd y\\
+&C\int_{\bC_+} y\chi(y)|\del_{x}^2g_1(z_{s\wedge\sigma}(u),x)||\Im[S(x+\ri y)]|\rd x\rd y.
\end{split}\end{align} 
We start by handling the first term on the RHS of \eqref{e:HF}.  The integrand is supported in $\{x + \i y : |x| \leq 2 \fb , \fb \leq |y| \leq 2 \fb \} \subseteq \dom_t$ for every $t$. In this region we have from Proposition \ref{e:gbound} that $g_1$ and $\del_x g_1$ are bounded, and by the definition of $\sigma$ we have  that $|S(x+\ri y)|\leq M/N$  in this region and so
\beq
\int_{\bC_+} (|g_1(z_{s\wedge\sigma},x)|+y|\del_xg_1(z_{s\wedge\sigma}(u),x)|)|\chi'(y)|S(x+\ri y)|\rd x\rd y \leq \frac{CM}{N}.
\eeq
We now handle the second term on the RHS of \eqref{e:HF}.  
By the definition of $t(u)$, $z_{s\wedge\sigma}(u)\in \dom_0$, and so $\Im[z_{s\wedge\sigma}(u)]\geq N^{-\fc}$. From Proposition \ref{e:gbound} 
we have $ | \del^2_x g(z,x) | \leq C |z-x|^{-1}$, and $|\del_x^2 g_1(z,x)|\leq C\chi(x)|z-x|^{-1}$. 


We split the second integral on the righthand side of \eqref{e:HF} into the two regions $\Lambda\deq\{E+\ri \eta: 0\leq \eta\leq e^{K(s\wedge\sigma)}N^{-\fc}\}$ and $\bC_+\setminus \Lambda$.
%
%
On $\Lambda$, we use the trivial bound $\Im[S(x+\ri y)]\leq |\td m_{s\wedge \sigma}(x+\ri y)|+|m_{s\wedge \sigma}(x+\ri y)|\leq 2/y$, and obtain
\begin{align}\begin{split}
&\int_{\Lambda} y\chi(y)|\del_{x}^2g_1(z_{s\wedge\sigma}(u),x)||\Im[S(x+\ri y)]|\rd x\rd y\\
\leq& C\int_{0\leq y\leq  e^{K(s\wedge\sigma)}N^{-\fc}} \frac{\chi(x)}{|z_{s\wedge\sigma}(u)-x|}\rd x\rd y
\leq \frac{C\log N}{N^{\fc}}.
\end{split}\end{align}
On $\bC_+\setminus\Lambda$, by Remark \ref{r:imagpart}, $|\Im[S(x+\ri y)]|\leq3e^{Kt}M\log N/(Ny)$, and therefore,
\begin{align}\begin{split}
&\int_{\bC_+\setminus\Lambda} y\chi(y)|\del_{x}^2g_1(z_{s\wedge\sigma}(u),x)||\Im[S(x+\ri y)]|\rd x\rd y\\
\leq& \frac{CM\log N}{N}\int_{\bC_+} \frac{\chi(x)\chi(y)}{|z_{s\wedge\sigma}(u)-x|}\rd x\rd y
\leq \frac{CM(\log N)^{2}}{N}.
\end{split}\end{align}
This completes the proof of \eqref{e:HFbound}.
\end{proof}

%
%
%
%

\begin{proof}[Proof of Theorem \ref{t:rigidity}]
We can now start analyzing \eqref{eq:mzt}. For any lattice point $u\in \cal L$ and $\tau\in[0,t(u)]$ (as in \eqref{def:tu}), by Proposition \ref{p:error} and \ref{prop:HFbound}, we have 
\begin{align*}
\left|\td m_{\tau\wedge\sigma}(z_{\tau\wedge\sigma}(u))-m_{\tau\wedge\sigma}(z_{\tau\wedge\sigma}(u))\right|
\leq& \int_0^{\tau\wedge\sigma}\left|\td m_s(z_s(u))-m_s(z_s(u))\right|\left|\del_z \left(\td m_s(z_s(u))+\frac{V'(z_s(u))}{2}\right)\right|\rd s\\+&
\frac{C({\tau\wedge\sigma})M(\log N)^{2}}{N}
+\frac{C(\log N)^{1+\delta}}{N\Im[z_{\tau\wedge\sigma}(u)]}.
\end{align*}
Notice that for $s\leq \tau\wedge \sigma$,
\begin{align}\label{e:aterm}
\left|\del_z \left(\td m_s(z_s(u))+\frac{V'(z_s(u))}{2}\right)\right|
\leq \frac{\Im[\tilde m_s(z_s(u))]}{\Im[z_s(u)]}+C,
\end{align}
where we used \eqref{dermtbound} and $\del_z V'(z_s(u))=\OO(1)$ from Proposition \ref{e:gbound}. Since $z_s(u)\in \cal D_s$, by the definition of $\cal D_s$, we have $\Im[m_s(z_s(u))]\geq M\log N/ (N\Im[z_s(u)])$. Moreover, since $s\leq \sigma$, we have $|\tilde m_s(z_s(u))-m_s(z_s(u))|\leq M/(N\Im[z_s(u)])\leq \Im[m_s(z_s(u))]/\log N$.  Therefore,
\begin{align}\label{e:aterm}
\left|\del_z \left(\td m_s(z_s(u))+\frac{V'(z_s(u))}{2}\right)\right|
\leq \frac{\Im[\tilde m_s(z_s(u))]}{\Im[z_s(u)]}+C\leq \left(1+\frac{1}{\log N}\right)\frac{\Im[ m_s(z_s(u))]}{\Im[z_s(u)]}+C.
\end{align}
We denote
\begin{align} \label{eqn:betabd}
\beta_s(u)\deq \left(1+\frac{1}{\log N}\right)\frac{\Im[ m_s(z_s(u))]}{\Im[z_s(u)]}+C= \OO\left(\frac{\Im[ m_s(z_s(u))]}{\Im[z_s(u)]}\right).
\end{align}
We have derived the inequality,
\begin{align}\begin{split}
\left|\td m_{\tau\wedge\sigma}(z_{\tau\wedge\sigma}(u))-m_{\tau\wedge\sigma}(z_{\tau\wedge\sigma}(u))\right|
\leq& \int_0^{\tau\wedge\sigma}\beta_s(u)\left|\td m_s(z_s(u))-m_s(z_s(u))\right|\rd s\\+&
\frac{C({\tau\wedge\sigma})M(\log N)^{2}}{N}
+\frac{C(\log N)^{1+\delta}}{N\Im[z_{\tau\wedge\sigma}(u)]}.
\end{split}
\end{align}
By Gronwall's inequality, this implies the estimate
\begin{align}\begin{split}\label{e:midgronwall}
&\left|\td m_{t\wedge\sigma}(z_{t\wedge\sigma}(u))-m_{t\wedge\sigma}(z_{t\wedge\sigma}(u))\right|
\leq \frac{C({t\wedge\sigma})M(\log N)^{2}}{N}
+\frac{C(\log N)^{1+\delta}}{N\Im[z_{t\wedge\sigma}(u)]}\\
+&\int_0^{t\wedge\sigma}\beta_s(u)\left(\frac{sM(\log N)^{2}}{N}
+\frac{C(\log N)^{1+\delta}}{N\Im[z_s(u)]}\right)e^{\int_s^{t\wedge\sigma} \beta_\tau(u)\rd \tau}\rd s.
\end{split}
\end{align}

 By \eqref{e:propzt} of Proposition \ref{p:propzt}, and \eqref{eqn:betabd}, we have
\begin{align*}
e^{\int_s^{t\wedge\sigma} \beta_\tau(u)\rd \tau}
\leq e^{C(t-s)} e^{\left(1+\frac{1}{\log N}\right)\log \left(\frac{\Im[z_{s}(u)]}{\Im[z_{t\wedge\sigma}(u)]}\right)}
=e^{C(t-s)} \left(\frac{\Im[z_{s}(u)]}{\Im[z_{t\wedge\sigma}(u)]}\right)^{1+\frac{1}{\log N}}
\leq C  \frac{\Im[z_s(u)]}{\Im[z_{t\wedge\sigma}(u)]}.
\end{align*}
In the last equality, we used the estimate \eqref{e:z0ztbound}  which shows that $\Im[z_{s}(u)]/\Im[z_{t\wedge\sigma}(u)] \leq C N$.
Combining the above inequality with \eqref{eqn:betabd} we can bound the last term in \eqref{e:midgronwall} by
\begin{align}\begin{split}\label{e:term2}
&C\int_0^{t\wedge\sigma}\frac{\Im[ m_{s}(z_s(u))]}{\Im[z_{t\wedge\sigma}(u)]}\left(\frac{sM(\log N)^{2}}{N}
+\frac{C(\log N)^{1+\delta}}{N\Im[z_s(u)]}\right)\rd s\\
\leq& \frac{CM(\log N)^{2}}{N\Im[z_{t\wedge \sigma}(u)]}\int_0^{t\wedge\sigma}s\Im[m_s(z_s(u))]\rd s+\frac{C(\log N)^{2+\delta}}{N\Im[z_{t\wedge \sigma(u)}]} ,
\end{split}\end{align}
where we used \eqref{e:dzt2} and that $\log (\Im[z_0(u)/z_{t\wedge \sigma}(u)])=\OO(\log N)$ from \eqref{e:z0ztbound}.  Since $|V' (z) | \leq C$, it follows from \eqref{def:zt} that $\Im[m_s(z_s(u))]=-\del_s \Im[z_s(u)] +\OO(1)$. Therefore we can bound the integral term in \eqref{e:term2} by,
\begin{align}\begin{split}\label{e:term3}
\int_0^{t\wedge\sigma}s\Im[m_s(z_s(u))]\rd s = &\int_0^{t\wedge\sigma}(-\del_s \Im[z_s(u)] )s \rd s + \OO ( (t \wedge \sigma )^2 ) = \OO ( t \wedge \sigma ) .
\end{split}\end{align}
It follows by combining \eqref{e:midgronwall}, \eqref{e:term2} and \eqref{e:term3} that
\begin{align}
\left|\td m_{t\wedge\sigma}(z_{t\wedge\sigma}(u))-m_{t\wedge\sigma}(z_{t\wedge\sigma}(u))\right|
\leq  C\left(\frac{({t\wedge\sigma})M (\log N)^{2} +(\log N)^{2+\delta}}{N\Im[z_{t\wedge\sigma}(u)]}\right).
\end{align}
%
Therefore on the event $\Omega$,
\beq
\left|\td m_{t\wedge\sigma}(z_{t\wedge\sigma}(u))-m_{t\wedge\sigma}(z_{t\wedge\sigma}(u))\right|=o\left(\frac{M}{N\Im[z_{t\wedge\sigma}(u)]}\right),
\eeq
provided $t\ll T= (\log N)^{-2}$, and $M=(\log N)^{2+2\delta}$. By Proposition \ref{prop:continuity}, for any $w\in \dom_{t\wedge\sigma}$, there exists some $u\in \cal L$ such that $z_{t\wedge\sigma}(u)\in \dom_{t\wedge\sigma}$, and 
\beq
|z_{t\wedge\sigma}(u)-w|=\OO(N^{-3\fc}).
\eeq
Moreover, on the domain $\cal D_{t\wedge \sigma}$, both $\td m_{t\wedge \sigma}$ and $m_{t\wedge\sigma}$ are Lipschitz with constant $N^{2\fc}$. Therefore
\begin{align}\begin{split}
&\left|\td m_{t\wedge\sigma}(w)-m_{t\wedge\sigma}(w)\right|\leq 
\left|\td m_{t\wedge\sigma}(z_{t\wedge\sigma}(u))-m_{t\wedge\sigma}(z_{t\wedge\sigma}(u))\right|\\
+&
\left|\td m_{t\wedge\sigma}(w)-\td m_{t\wedge\sigma}(z_{t\wedge\sigma}(u))\right|+
\left| m_{t\wedge\sigma}(w)-m_{t\wedge\sigma}(z_{t\wedge\sigma}(u))\right|\\
=& o\left(\frac{M}{N\Im[z_{t\wedge\sigma}(u)]}\right)+\OO\left(\frac{|z_{t\wedge\sigma}(u)-w|}{N^{-2\fc}}\right)=o\left(\frac{M}{N\Im[z_{t\wedge\sigma}(u)]}\right). \label{eqn:q1}
\end{split}\end{align}
If $ \sigma < t$ on the event $\Omega$ then by continuity there must be a point $z \in \dom_\omega$ s.t. 
\beq
| \td m_{ \sigma } (z) - m_{\sigma} (z) | = \frac{ M}{ N \Im [z] } .
\eeq
This contradicts \eqref{eqn:q1}, and so we see that on $\Omega$, $\sigma = t$.  This completes the proof of \eqref{e:diffmm}.
\end{proof}

\subsection{Proof of Corollary \ref{c:eigloc}} \label{sec:corproof}
\begin{proof}[Proof of Corollary \ref{c:eigloc}]
The proof follows a similar argument to \cite[Lemma B.1]{MR2639734}. Recall the function $\eta(x)$ from Remark \ref{r:imagpart}.
Let $S(x+\ri y)=\td m_{t}(x+\ri y)-m_{t}(x+\ri y)$. Fix some $E_0\in[-\fb, \fb]$.  Define
\beq
\td \eta := \inf_{ \eta \geq e^{ K t} N^{1-\fc} } \left\{ \eta : \max_{ E_0 \leq x \leq E_0 + \eta } \eta (x) \leq \eta \right\}.
\eeq

For later use we define
\beq
\td E := \argmax_{E_0 \leq x \leq E_0 +  \td \eta } \eta (x),
\eeq
so that 
\beq
\eta ( \td E ) = \td \eta.
\eeq
We define a test function $f:\bR\rightarrow \bR$, such that $f(x)=1$ on $x\in [-2\fb, E_0]$, and so that $f(x)$ vanishes outside $[-2\fb-1, E_0+\td \eta]$.  We take $f$ so that $f'(x)=\OO(1)$ and $f''(x)=\OO(1)$ on $[-2\fb-1,-2\fb]$ and $f'(x)=\OO(1/\td\eta)$ and $f''(x)=\OO(1/\td\eta^2)$ on $[E_0,E_0+\td\eta]$. By the Helffer-Sj{\"o}strand formula we have,
\begin{align}\begin{split}\label{e:eigHF}
\left|\int_{-\infty}^{\infty} f(x)( \rd \td \mu_t(x)-\rd\mu_t(x))\right|
\leq&
C\int_{\bC_+} ( |f(x)|+|y||f'(x)|)|\chi'(y)||S(x+\ri y)|\rd x\rd y\\
+&C\left|\int_{\bC_+} y\chi(y)f''(x)\Im[S(x+\ri y)]\rd x\rd y\right|.
\end{split}\end{align} 
On the event such that \eqref{e:diffmm} holds, the first term is easily bounded by
\beq\label{e:fterm}
\int_{\bC_+} ( |f(x)|+|y||f'(x)|)|\chi'(y)||S(x+\ri y)|\rd x\rd y
\leq \frac{CM}{N}.
\eeq 
For the second term, recall that $f''(x)=0$ unless $x\in [E_0,E_0+\td\eta]\cup [-2\fb-1,-2\fb]$.  By Remark \ref{r:imagpart} we have the estimate
\beq \label{eqn:ff1}
| \Im [ S ( x + \ri y ) ] | \leq \frac{ C M \log (N) }{ N y }, \qquad y \geq \frac{ e^{ K t }}{ N^{\fc}} =: \eta_\fc.
\eeq
Hence,
\begin{align}
\left|\int_{-2\fb-1\leq x\leq -2\fb} y\chi(y)f''(x)\Im[S(x+\ri y)]\rd x\rd y\right| &\leq \left|\int_{-2\fb-1\leq x\leq -2\fb, |y| \geq \eta_\fc} y\chi(y)f''(x)\Im[S(x+\ri y)]\rd x\rd y\right|  \notag\\
&+ \left|\int_{-2\fb-1\leq x\leq -2\fb, |y| \leq \eta_\fc} | f'' (x) | \rd x\rd y\right|    \notag\\
&\leq   \frac{CM\log N}{N} + \frac{C}{N^{ \fc} } \leq \frac{CM\log N}{N} .
\end{align} 
In the first integral we used the estimate \eqref{eqn:ff1} and in the second we used $|y \Im [ S (x + \i y ) ] | \leq 2$.  For the region $x \in [E_0, E_0 + \td \eta ]$ we do a similar decomposition.  First we bound the region $|y| \leq \td \eta$.  We have,
\begin{align}
\left|\int_{E_0\leq x\leq E_0+\td\eta}\int_{y\leq \td \eta} y\chi(y)f''(x)\Im[S(x+\ri y)]\rd x\rd y\right| &\leq \left|\int_{E_0\leq x\leq E_0+\td\eta}\int_{y\leq \eta_\fc} y\chi(y)f''(x)\Im[S(x+\ri y)]\rd x\rd y\right|   \notag\\ 
&+ \left|\int_{E_0\leq x\leq E_0+\td\eta}\int_{ \eta_\fc \leq y\leq \td \eta} y\chi(y)f''(x)\Im[S(x+\ri y)]\rd x\rd y\right| \notag\\
&\leq \frac{ C \eta_\fc}{ \td \eta} + \frac{ C \log(N) M}{N} \leq \frac{ C \log(N) M}{N}.
\end{align}
For the first integral we used $y|  \Im [S (x + \i y ) ]  |\leq 2$ and in the second region we used \eqref{eqn:ff1}.   For the other region we integrate by parts, 
\begin{align}\begin{split}\label{exp3}
&\int_{E_0\leq x\leq E_0+\td\eta}\int_{y\geq \td \eta} y\chi(y)f''(x)\Im[S(x+\ri y)]\rd x\rd y 
=
-\int_{E_0\leq x\leq E_0+\td \eta} f'(x)\td \eta \Re[S(x+\ri \td \eta)]\rd x\\
-&\int_{E_0\leq x\leq E_0+\td \eta}\int_{y\geq  \td \eta} f'(x)\del_y(y\chi(y))\Re[S(x+\ri y)]\rd x\rd y.
\end{split}\end{align}
By the definition of $\td \eta$, the estimate \eqref{e:diffmm} holds in the region $x+ \i y$ for $x \in [E_0 , E_0 + \i \td \eta]$ and $y \geq \td \eta$.  Hence, both terms are easily estimated by $C  \log(N) M / N$.
The above estimates imply
\begin{align}
\left|\int_{-\infty}^{\infty} f(x)( \rd \td \mu_t(x)-\rd\mu_t(x))\right|
\leq\frac{CM\log N}{N}.
\end{align}
We can now prove the lower bound of \eqref{e:eigloc}.  We have,
\begin{align}\begin{split}
&|\{i:\lambda_i(t)\leq E_0\}|
\leq N\int_{-\infty}^{\infty}f(x)\rd \td\mu_t(x)
\leq N\int_{-\infty}^{E_0+\td\eta}\rd \mu_t(x)+CM\log N.
\end{split}\end{align}
If $\td \eta = e^{ K t } / N^{\fc}$ then the lower bound of \eqref{e:eigloc} follows by taking 
$E_0=\gamma_{i-CM\log N}(t)-  N^{-\fc+1}$. If $\td \eta =\eta(\td E)> e^{ K t }  / N^{\fc}$, then by the defining relation of the function $\eta(E)$ as in \eqref{defetax}, we have $\tilde \eta\Im[m_t(\tilde E+\ri \td \eta)]=e^{Kt}M\log N/N$. We calculate
\beq
\int_{E}^{E+\td\eta}\rd \mu_t(x)\leq \int_{E}^{E+\td\eta}\frac{2\td\eta^2}{(x-\td E)^2+\td \eta^2}\rd \mu_t(x)\leq2\td\eta\Im[m_t(\td E+\ri \td\eta)]=\frac{2e^{Kt}M\log N}{N}.
\eeq
Hence,
\beq
|\{i:\lambda_i(t)\leq E_0\}| \leq N \int_{- \infty}^{ E_0} \d \mu_t (x) + C M \log (N).
\eeq
The lower bound then follows by taking $E_0=\gamma_{i-CM\log N}(t)$. 
  The upper bound of \eqref{e:eigloc} is proven similarly.
\end{proof}

%
%

\section{Mesoscopic Central Limit Theorem}\label{s:mesoCLT}

In this section we prove a mesoscopic central limit theorem for $\beta$-DBM \eqref{DBM}. We recall the parameters $\delta$ and $M$ defined at the beginning of Section \ref{s:rigidity}. In this section, we fix scale parameters, $\eta_*$ and $r$ such that $N^{-1}\leq \eta_*\ll r\leq 1$.  If we assume that the initial data $\bm\la(0)$ is regular down to the scale $\eta_*$, on the interval $[E_0-r, E_0+r]$, we can prove that after time $t\gg \eta_*$, the linear statistics satisfy a central limit theorem on the scale $\eta\ll t$.  The precise definition of regularity is the following assumption.
\begin{assumption}\label{a:asumpL}
We assume that the initial data satisfies the following two conditions.
\begin{enumerate}
\item There exists some finite constant $\fa$, such that $-\fa\leq \la_1(0)\leq \la_2(0)\cdots\leq \la_N(0)\leq \fa$;
\item There exists some finite constant $\fd$, such that 
\beq\label{e:asumpm0}
\fd^{-1}\leq \Im[m_0(z)]\leq \fd,
\eeq
uniformly for any $z\in\{E+\ri\eta: E\in[E_0-r, E_0+r], \eta_*\leq \eta\leq 1\}$. 
\end{enumerate}
\end{assumption}
Under the above assumption we can prove the following mesoscopic central limit theorem for the Stieltjes transform.
\begin{theorem}\label{t:mesoCLT}
Suppose $V$ satisfies Assumption \ref{a:asumpV}, and moreover that $V$ is  $C^5$.   Fix small constant $\delta>0$, $M=(\log N)^{2+2\delta}$, and $N^{-1}\leq \eta_*\ll r\leq 1$, and assume that the  initial data $\bm\la (0)$ satisfies Assumption \ref{a:asumpL}. 
For any time $t$ with $\eta_*\ll t\ll (\log N)^{-1}r\wedge (\log N)^{-2}$, the normalized Stieltjes transform $\Gamma_t(z)\deq N\Im[z]\left(\td m_t(z)-m_t(z)\right)$ is asymptically a Gaussian field on $\{E+\ri \eta: E\in[E_0-r/2, E_0+r/2], M^2/N\ll \eta\ll t/(M\log N)\}$.
We have for any $z_1,z_2,\cdots, z_k\in \{E+\ri \eta: E\in[E_0-r/2, E_0+r/2], M^2/N\ll \eta\ll t/(M\log N)\}$, the joint characteristic function of $\Gamma_t(z_1), \Gamma_t(z_2),\cdots, \Gamma_t(z_k)$ is given by
\begin{align}\begin{split}\label{e:variance}
\bE\left[\exp\left\{\ri\sum_{j=1}^k a_j \Re[\Gamma_t(z_j)]+b_j \Im[\Gamma_t(z_j)]\right\}\right]
=&\exp\left\{\sum_{1\leq j,\ell\leq k}\Re\left[\frac{(a_j-\ri b_j)(a_\ell+\ri b_\ell)\Im[z_j]\Im[z_\ell]}{2\beta(z_j-\bar{z}_\ell)^2}\right]\right\}\\
+&\OO\left(\frac{M^2}{N\min_j\{\Im[z_j]\}}+\frac{M\log N\max_j\{\Im[z_j]\}}{t}\right).
\end{split}\end{align}
\end{theorem}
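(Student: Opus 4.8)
The plan is to run the analysis of the stochastic differential equation \eqref{e:diffm} for $f_s := \td m_s - m_s$ along the characteristics $z_s(u)$ exactly as in the proof of Theorem \ref{t:rigidity}, but now keeping track of the leading martingale term rather than merely bounding it; the central limit theorem will then follow by recognising this term as the terminal value of a continuous martingale with an essentially deterministic quadratic variation. First I would set up the characteristics: given $z_j = E_j + \i\eta_j$ with $E_j \in [E_0 - r/2, E_0 + r/2]$ and $M^2/N \ll \eta_j \ll t/(M\log N)$, Proposition \ref{prop:invflow} lets me write $z_j = z_t(u_j)$ for a unique $u_j \in \bC_+$. By \eqref{e:mtbound}, \eqref{e:ztbound} and Assumption \ref{a:asumpL} (which gives $\Im[m_0(z)]\asymp 1$, hence $\Im[m_s(z_s(u_j))]\asymp 1$, on the relevant part of the flow) the characteristic rises as $s$ decreases, so that $\Im[u_j]=\Im[z_0(u_j)]\asymp t$ and $|\Re[z_s(u_j)] - E_j| = \OO(t\log N)$; since $t \ll (\log N)^{-1}r$ the whole trajectory $\{z_s(u_j) : 0\le s\le t\}$ stays in the region where Assumption \ref{a:asumpL} and the local law of Theorem \ref{t:rigidity} apply, and in particular $f_0(z_0(u_j)) = 0$.

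Second, I would solve \eqref{e:diffm} by Duhamel along each characteristic, treating $\partial_z(\td m_s(z_s) + V'(z_s)/2)$ as the coefficient of the linear term and the Helffer--Sj\"ostrand term and the $\frac{2-\beta}{\beta N^2}$ term as inhomogeneities. Using $f_0 = 0$ this gives, with $z_s := z_s(u_j)$,
\[
f_t(z_j) = -\int_0^t \Theta_j(s,t)\,\sqrt{\tfrac{2}{\beta N^3}}\sum_{i}\frac{\rd B_i(s)}{(\lambda_i(s) - z_s)^2} \;+\; R_j,\qquad \Theta_j(s,t) = \exp\!\Big(\!\int_s^t \partial_z\td m_\tau(z_\tau) + \tfrac12\partial_z V'(z_\tau)\,\rd\tau\Big).
\]
By the local law (Theorem \ref{t:rigidity}) one may replace $\td m_\tau$ by $m_\tau$ in the exponent at a relative cost $\OO(M/(N\eta_j)) = o(1)$, so $\Theta_j(s,t)$ may be replaced by the \emph{deterministic} factor $\hat\Theta_j(s,t) = \partial_u z_s(u_j)/\partial_u z_t(u_j)$ (the difference with this exponential being only the non-holomorphic part of $V'$, which is $\OO(\Im[z]^2)$ by the quasi-analyticity of \eqref{def:V'}), and $\hat\Theta_j$ is controlled by Proposition \ref{prop:dzms}. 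The remainder $R_j$ gathers the Helffer--Sj\"ostrand contribution (estimated via Proposition \ref{prop:HFbound} together with the sharper bounds of Proposition \ref{e:gbound}(3) available since $V\in C^5$), the $\frac{2-\beta}{\beta N^2}$ drift (which is $\approx \td m_s''(z_s)/(2N)$ and is handled by the same telescoping as the covariance below), and the $\td m$-versus-$m$ error; after multiplication by $N\eta_j$ all of these are $\OO\!\big(\frac{M^2}{N\eta_j} + \frac{M\log N\,\eta_j}{t}\big)$ by \eqref{e:propzt}.

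Third, since the $z_s(u_j)$ solve the deterministic equation \eqref{def:zt} and the $\hat\Theta_j$ are deterministic, the principal term above is, for fixed $t$, the terminal value of a genuine continuous $\mathscr F_s$-martingale, and $(\Gamma_t(z_1),\dots,\Gamma_t(z_k))$ is a vector of such terminal values up to the error of \eqref{e:variance}. Fixing real $a_j,b_j$ and writing $X = \sum_j a_j\Re\Gamma_t(z_j) + b_j\Im\Gamma_t(z_j) = N_t + \OO(\cdot)$ with $N_s$ the corresponding real martingale, one has $e^{\i N_t} = \mathcal{E}_t\, e^{-\frac12\langle N\rangle_t}$ with $\mathcal{E}$ the exponential martingale; localising on the event of Theorem \ref{t:rigidity} (on which $\langle N\rangle_t = \OO(1)$, so $\mathcal{E}$ is a true martingale) gives $\bE[e^{\i X}] = \exp(-\tfrac12\sigma^2) + o(1)$ where $\sigma^2$ is the deterministic value on which $\langle N\rangle_t$ concentrates. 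That quadratic variation is a quadratic form in the $a_j,b_j$ with coefficients
\[
\frac{2}{\beta N^3}\,N\eta_j\,N\eta_\ell\int_0^t \hat\Theta_j(s,t)\,\overline{\hat\Theta_\ell(s,t)}\,\sum_i\frac{\rd s}{(\lambda_i(s) - z_s(u_j))^2(\lambda_i(s) - \overline{z_s(u_\ell)})^2};
\]
by Theorem \ref{t:rigidity} the inner sum concentrates on $N\!\int (x - z_s(u_j))^{-2}(x - \overline{z_s(u_\ell)})^{-2}\rd\mu_s(x) = N\,\partial_a\partial_b\big(\tfrac{m_s(a)-m_s(b)}{a-b}\big)$ evaluated at $a = z_s(u_j)$, $b = \overline{z_s(u_\ell)}$. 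Both $z_s(u_j)$ and $\overline{z_s(u_\ell)}$ solve \eqref{def:zt}; using this, \eqref{e:mtzt}, and the linearised equation for $\partial_u z_s$, one checks that the resulting $s$-integrand is --- up to the $V'$-corrections absorbed into the error --- an exact derivative, so the integral collapses to a boundary term at $s=t$ equal to $\Im[z_j]\Im[z_\ell]/(2\beta(z_j - \bar z_\ell)^2)$, plus a boundary term at $s=0$ that is negligible because $\Im[z_0(u_j)]\asymp t \gg \eta_j$; summing over $j,\ell$ yields \eqref{e:variance}.

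The main obstacle is precisely the exact-derivative (telescoping) identity used in the third step: one must exhibit a primitive, along the pair of characteristics $s\mapsto(z_s(u_j),\overline{z_s(u_\ell)})$, for the two-point quantity $\partial_a\partial_b\big(\tfrac{m_s(a)-m_s(b)}{a-b}\big)$ weighted by $\hat\Theta_j(s,t)\overline{\hat\Theta_\ell(s,t)}$. It is this cancellation that makes the limiting covariance depend only on the terminal points $z_j,\bar z_\ell$ and not on the initial-data dependent process $\mu_s$ --- that is, it is the mechanism behind the universality of the mesoscopic fluctuations. The remaining work is bookkeeping: controlling $R_j$, every replacement of $\td m$ by $m$, and the fluctuation of $\langle N\rangle_t$ about its deterministic value, uniformly over $z_1,\dots,z_k$, which repeatedly uses \eqref{e:propzt}, the regularity propagated along the flow, and the lower bound $\eta_j \gg M^2/N$.
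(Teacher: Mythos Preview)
Your approach is essentially correct, but you take a more laborious route than the paper, and the ``main obstacle'' you flag at the end is self-inflicted. The paper does \emph{not} use Duhamel. It observes instead that, thanks to the regularity estimate $\partial_z \td m_s(z_s(u)) = \OO(1/t)$ of Lemma~\ref{dpbound} (resting on Proposition~\ref{prop:dzms} and Corollary~\ref{c:uniformest}), the linear term in \eqref{e:intdiffm},
\[
\int_0^t (\td m_s - m_s)(z_s)\,\partial_z\!\Bigl(\td m_s(z_s) + \tfrac{V'(z_s)}{2}\Bigr)\rd s
= \OO\!\Bigl(\int_0^t \frac{M}{N\Im[z_s]}\Bigl(\frac{M}{N\Im[z_s]^2} + \frac{1}{t}\Bigr)\rd s\Bigr)
= \OO\!\Bigl(\frac{M^2}{(N\Im[z_t])^2} + \frac{M\log N}{Nt}\Bigr),
\]
is already inside the error of \eqref{e:variance} after multiplication by $N\Im[z_t]$; this is exactly why one requires $\eta \ll t/(M\log N)$. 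Hence $f_t(z_t)$ equals the \emph{bare} Brownian integral $-\sqrt{2/(\beta N^3)}\int_0^t \sum_i(\lambda_i(s)-z_s)^{-2}\,\rd B_i(s)$ plus acceptable error, with no propagator $\Theta_j$ whatsoever.

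The covariance computation (Proposition~\ref{p:var}) is then correspondingly simple: partial fractions give
\[
\frac{1}{N}\sum_i \frac{1}{(\lambda_i - \bar z_s)^2(\lambda_i - z_s')^2}
= \frac{2(-\overline{\td m_s(z_s)} + \td m_s(z_s'))}{(\bar z_s - z_s')^3}
+ \frac{\overline{\partial_z \td m_s(z_s)} + \partial_z \td m_s(z_s')}{(\bar z_s - z_s')^2},
\]
the second term is again $\OO(1/t)/\Im[z_s]^2$ and integrates to something negligible, and in the first one reads off $-\overline{m_s(z_s)} + m_s(z_s') = \partial_s(\bar z_s - z_s') + \OO(|\bar z_s - z_s'|)$ directly from the characteristic equation \eqref{def:zt}, so the integrand is $-\partial_s(\bar z_s - z_s')^{-2}$ plus lower order. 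No weighted two-point primitive involving $\hat\Theta_j\overline{\hat\Theta_\ell}$ is needed. Your route would also close --- since $\hat\Theta_j(t,t)=1$ and $\partial_s\hat\Theta_j = \OO(1/t)$, one integration by parts reduces your weighted integral to the paper's plus an $\OO(\eta_j/t)$ error --- but the Duhamel detour buys nothing and obscures that the $\OO(1/t)$ regularity of $\partial_z m_s$ along characteristics is doing all the work: once to kill the linear term outright, and once to kill the $\partial_z m$ piece of the partial-fraction decomposition.
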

By standard arguments the above theorem implies the following central limit theorem for mesoscopic linear statistics.
\begin{corollary}\label{c:mesoCLT}
Under the assumptions of Theorem \ref{t:mesoCLT}, the following holds for any compactly supported $C^2$ function $\psi$. Let $M^2/N\ll \eta\ll t$, $E\in[E_0-r, E_0+r]$, and define
\begin{align*}
\psi_{\eta,E}(x)=\psi\left(\frac{x-E}{\eta}\right).
\end{align*}
The normalized linear statistics converges to a Gaussian
\beq
\cal L(\psi_{\eta,E})\deq \sum_{i=1}^N \psi_{\eta,E}(\lambda_i(t))-N\int_{\bR} \psi_{\eta,E}(x) \rd \mu_t(x)\rightarrow N(0, \sigma_\psi^2),
\eeq
in distribution as $N\rightarrow \infty$, where 
\beq
\sigma_\psi^2\deq \frac{1}{2\beta\pi^2}\int_{\bR^2} \left(\frac{\psi(x)-\psi(y)}{x-y}\right)^2\rd x\rd y.
\eeq
\end{corollary}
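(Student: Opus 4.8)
The plan is to deduce Corollary \ref{c:mesoCLT} from Theorem \ref{t:mesoCLT} via the Helffer--Sj\"ostrand formula, in the now-standard way of passing from a CLT for the Stieltjes transform to a CLT for smooth linear statistics.

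First I would write $\cal L(\psi_{\eta,E})=N\int_\bR\psi_{\eta,E}(x)(\rd\td\mu_t(x)-\rd\mu_t(x))$ and apply the Helffer--Sj\"ostrand formula with the quasi-analytic extension $\td\psi_{\eta,E}(x+\ri y)=(\psi_{\eta,E}(x)+\ri y\psi_{\eta,E}'(x))\chi(y)$, $\chi$ a smooth cutoff equal to $1$ near $0$ and supported in $|y|\le1$. Since $\Gamma_t(w)=N\Im[w](\td m_t(w)-m_t(w))$, this gives
\beq
\cal L(\psi_{\eta,E})=\frac{1}{\pi}\int_{\bC}\del_{\bar w}\td\psi_{\eta,E}(w)\,\frac{\Gamma_t(w)}{\Im[w]}\,\rd^2 w ,
\eeq
and, using the reflection symmetry $\overline{\td m_t(w)}=\td m_t(\bar w)$, $\overline{m_t(w)}=m_t(\bar w)$ (hence $\Gamma_t(\bar w)=-\overline{\Gamma_t(w)}$), one folds the integral onto $\bC_+$ to obtain $\cal L(\psi_{\eta,E})=\tfrac{2}{\pi}\Re\,Z$ with $Z=\int_{\bC_+}\del_{\bar w}\td\psi_{\eta,E}(w)\,\Gamma_t(w)/\Im[w]\,\rd^2 w$; recall $\del_{\bar w}\td\psi_{\eta,E}(w)$ equals, up to the factor $\tfrac{\ri}{2}$, $\,y\psi_{\eta,E}''(x)\chi(y)+(\psi_{\eta,E}(x)+\ri y\psi_{\eta,E}'(x))\chi'(y)$.

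Next I would localize the $w=x+\ri y$ integral, splitting into $y\le\eta(\log N)^{-A}$, $\eta(\log N)^{-A}\le y\le\eta(\log N)^{A}$, and $y\ge\eta(\log N)^{A}$ (which contains $\supp\chi'$), for a large constant $A$. On the first region the trivial bound $|\td m_t-m_t|\le 2/y$ for $y\le e^{Kt}N^{-\fc}$ and the estimate $|\Im[\td m_t-m_t]|\le CM\log N/(Ny)$ of Remark \ref{r:imagpart} above that scale, combined with $\int_\bR|y\psi_{\eta,E}''(x)|\rd x\le Cy/\eta$, make this part $\oo(1)$. On the third region, and for the $\chi'$ term, one integrates by parts in $x$ to move derivatives onto $\td m_t-m_t$, uses the Cauchy estimate $|\del_x^k(\td m_t-m_t)(w)|\le CM/(Ny^{k+1})$ coming from Theorem \ref{t:rigidity}, together with $\int_\bR|\psi_{\eta,E}(x)|\rd x\le C\eta$ and $\int_\bR|\psi_{\eta,E}'(x)|\rd x\le C$; this is again $\oo(1)$ since $\eta M\log N\ll t\ll1$ in the regime of Theorem \ref{t:mesoCLT}. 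What survives is the window, where $w\in\dom_t$ and $\Im[w]$ ranges in $M^2/N\ll\Im[w]\ll t/(M\log N)$, exactly where Theorem \ref{t:mesoCLT} applies. There $\td m_t,m_t$, hence $\Gamma_t$, are Lipschitz with a polynomial-in-$N$ constant on the overwhelming-probability event of Theorem \ref{t:rigidity}, so I can replace the integral by a Riemann sum $\sum_k c_k\Gamma_t(w_k)$ over a grid of mesh $N^{-C}$ with deterministic error $\OO(N^{-c})$, and apply Theorem \ref{t:mesoCLT} to this finite sum: its characteristic function, and hence that of $\cal L(\psi_{\eta,E})$, converges to that of a centered real Gaussian (there is no linear term in \eqref{e:variance}, so the limit is centered).

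Its variance is then computed from $\var(\cal L(\psi_{\eta,E}))=\tfrac{4}{\pi^2}\var(\Re Z)=\tfrac{2}{\pi^2}\big(\Re\bE[Z^2]+\bE[|Z|^2]\big)$ by inserting the limiting covariances read off from \eqref{e:variance}, namely $\bE[\Gamma_t(z_1)\overline{\Gamma_t(z_2)}]\to -2\Im[z_1]\Im[z_2]/(\beta(z_1-\bar z_2)^2)$ and $\bE[\Gamma_t(z_1)\Gamma_t(z_2)]\to 0$. The vanishing of the anomalous second moment kills the $\bE[Z^2]$ term and leaves
\beq
\sigma_\psi^2=\frac{-4}{\beta\pi^2}\int_{\bC_+}\int_{\bC_+}\frac{\del_{\bar w_1}\td\psi_{\eta,E}(w_1)\,\overline{\del_{\bar w_2}\td\psi_{\eta,E}(w_2)}}{(w_1-\bar w_2)^2}\,\rd^2 w_1\,\rd^2 w_2 ,
\eeq
which is independent of $\eta$ and $E$ after the rescaling $w_j=E+\eta\zeta_j$. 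It then remains to identify this with $\frac{1}{2\beta\pi^2}\int_{\bR^2}\big(\tfrac{\psi(x)-\psi(y)}{x-y}\big)^2\rd x\,\rd y$: this is a deterministic identity --- both sides equal, up to the factor $1/(2\beta\pi^2)$, the $\dot H^{1/2}$ Dirichlet energy of $\psi$ --- which I would verify by passing to Fourier variables, where each side is a constant multiple of $\int_\bR|\xi|\,|\widehat\psi(\xi)|^2\rd\xi$, or by reversing the Helffer--Sj\"ostrand representation. I expect the hard part to be the localization step: squeezing the microscopic, large-scale, and $\chi'$ contributions below the $M=(\log N)^{2+2\delta}$ and $\log N$ losses carried by the rigidity estimate is what forces one to work in the range $\eta\ll t/(M\log N)$ in which Theorem \ref{t:mesoCLT} holds; the discretization and the final Fourier identity are routine.
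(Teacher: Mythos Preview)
Your proposal is correct and follows exactly the route the paper intends: the paper's proof of Corollary \ref{c:mesoCLT} is a one-line reference, stating that it follows from Theorem \ref{t:mesoCLT} and the rigidity estimate Theorem \ref{t:rigidity} by the same argument as in \cite[Theorem 1.2]{mesoCLT1}, and what you have sketched is precisely that standard Helffer--Sj\"ostrand reduction. Your identification of the limiting covariance from \eqref{e:variance}, the localization into three $y$-windows using Remark \ref{r:imagpart} and the local law, and the final $\dot H^{1/2}$ identity for $\sigma_\psi^2$ are all the expected steps.
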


\subsection{Regularity of the  Stieltjes transform of the limit measure-valued process}
In this subsection we analyze the differential equation of the Stieltjes transform of the limit measure-valued process \eqref{e:mtzt} under the assumptions of Theorem 
\ref{t:mesoCLT}.  We will need some regularity results for $m_t$.  First we prove some preliminary estimates.  The following two estimates are standard.

\begin{lemma}
Under the assumptions of Theorem \ref{t:mesoCLT}, we have, for any interval $I=[E-\eta, E+\eta]$ with $E\in[E_0-r, E_0+r]$ and $\eta\in [4\fd^2\eta_*, 1]$, the estimate
\beq \label{e:bounddensity}
\frac{|I|N}{16\fd^3}\leq |\{i:\lambda_i(0)\in I\}|\leq \fd |I|N .
\eeq
\end{lemma}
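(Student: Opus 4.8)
The plan is to use the elementary identity $\Im[m_0(E+\ri\zeta)] = \frac1N\sum_{i=1}^N \frac{\zeta}{(\lambda_i(0)-E)^2+\zeta^2}$ together with the two-sided control $\fd^{-1}\le\Im[m_0(z)]\le\fd$ from Assumption~\ref{a:asumpL}, which is available for all $E\in[E_0-r,E_0+r]$ and all $\zeta\in[\eta_*,1]$. Note that $\fd\ge1$ is automatic (from $\fd^{-1}\le\fd$), so $[4\fd^2\eta_*,1]\subset[\eta_*,1]$, and in particular the half-width $\eta$ of $I$ lies in $[\eta_*,1]$.

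\emph{Upper bound.} I would apply the identity at the spectral parameter $E+\ri\eta$ whose imaginary part equals the half-width $\eta$ of $I$. For each $\lambda_i(0)\in I$ one has $(\lambda_i(0)-E)^2+\eta^2\le2\eta^2$, so the corresponding summand is at least $1/(2\eta)$; discarding the remaining nonnegative summands gives $\Im[m_0(E+\ri\eta)]\ge\frac{1}{2\eta N}\,|\{i:\lambda_i(0)\in I\}|$, and $\Im[m_0]\le\fd$ then rearranges to $|\{i:\lambda_i(0)\in I\}|\le 2\eta\fd N=\fd|I|N$.

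\emph{Lower bound.} This is the more delicate direction, since largeness of $\Im[m_0]$ at a single point does not by itself localize the mass. I would apply the identity at $z=E+\ri\eta_\circ$ with $\eta_\circ:=\eta/(4\fd^2)$, chosen so that $4\fd^2\eta_\circ=\eta$ and $\eta_\circ\in[\eta_*,1]$ (here is where the hypothesis $\eta\ge4\fd^2\eta_*$ is used, with $\fd\ge1$ handling the upper endpoint), and split the sum as $A+B$ according to whether $|\lambda_i(0)-E|<\eta$ or $|\lambda_i(0)-E|\ge\eta$. The near part satisfies $A\le\frac{1}{N\eta_\circ}\,|\{i:\lambda_i(0)\in I\}|$ since each summand is at most $\eta_\circ^{-1}$. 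For the tail part, the key elementary inequality is that $|\lambda_i(0)-E|\ge\eta$ implies $(\lambda_i(0)-E)^2+\eta^2\le2(\lambda_i(0)-E)^2$, whence
\[
\frac{\eta_\circ}{(\lambda_i(0)-E)^2+\eta_\circ^2}\le\frac{\eta_\circ}{(\lambda_i(0)-E)^2}\le\frac{2\eta_\circ}{\eta}\cdot\frac{\eta}{(\lambda_i(0)-E)^2+\eta^2};
\]
summing over the tail indices and then enlarging the sum to all $i$ (the summands being nonnegative) gives $B\le\frac{2\eta_\circ}{\eta}\Im[m_0(E+\ri\eta)]\le\frac{1}{2\fd^2}\cdot\fd=\frac1{2\fd}$. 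Since $A+B=\Im[m_0(E+\ri\eta_\circ)]\ge\fd^{-1}$, we obtain $A\ge\frac{1}{2\fd}$, and therefore $|\{i:\lambda_i(0)\in I\}|\ge N\eta_\circ A\ge\frac{N\eta}{8\fd^3}=\frac{N|I|}{16\fd^3}$.

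I do not expect a genuine obstacle here: the only bookkeeping is to check that every imaginary part used ($\eta$ in the upper bound, and both $\eta$ and $\eta_\circ$ in the lower bound) lies in $[\eta_*,1]$ so that Assumption~\ref{a:asumpL} applies, which is exactly arranged by the threshold $\eta\ge4\fd^2\eta_*$. The one step that is not purely mechanical, and which fixes the constants, is the tail estimate for $B$: comparing the Poisson kernel at scale $\eta_\circ$ with the Poisson kernel at the larger scale $\eta=4\fd^2\eta_\circ$ is what lets the inequality close with the stated constants and, in particular, without losing a logarithm in $N$.
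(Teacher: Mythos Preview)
Your proof is correct and follows essentially the same argument as the paper's: the same spectral parameters $E+\ri\eta$ for the upper bound and $E+\ri\eta/(4\fd^2)$ for the lower bound, the same near/far decomposition, and the same tail comparison of the Poisson kernel at scale $\eta_\circ$ against the one at scale $\eta$ to bound $B\le\frac{2\eta_\circ}{\eta}\Im[m_0(E+\ri\eta)]\le\frac{1}{2\fd}$. The only cosmetic difference is that you insert the intermediate step $\frac{\eta_\circ}{(\lambda_i(0)-E)^2+\eta_\circ^2}\le\frac{\eta_\circ}{(\lambda_i(0)-E)^2}$ before comparing to the larger-scale kernel, whereas the paper compares the two kernels directly; the resulting inequality and constants are identical.
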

\begin{proof}
For the upper bound, by taking $z=E+\ri \eta$, we have
\beq
\fd \geq \Im[m_0(E+\ri\eta)]\geq \frac{1}{N}\sum_{i:\lambda_i\in I}\frac{\eta}{(\lambda_i-E)^2+\eta^2}\geq \frac{|\{i:\lambda_i\in I\}|}{2N\eta }.
\eeq
For the lower bound, let $\eta_1=\eta/(4\fd^2)\geq \eta_*$, we have
\begin{align}\begin{split}
\fd^{-1}\leq &\Im [m_0(E+\ri\eta_1)]=\frac{1}{N}\sum_{i:\lambda_i\in I}\frac{\eta_1}{(\lambda_i-E)^2+\eta_1^2}+\frac{1}{N}\sum_{i:\lambda_i\not\in I}\frac{\eta_1}{(\lambda_i-E)^2+\eta_1^2}\\
\leq& \frac{|\{i:\lambda_i\in I\}|}{N\eta_1}+\frac{\eta_1}{\eta}\frac{1}{N}\sum_{i=1}^N\frac{2\eta}{(\lambda_i-E)^2+\eta^2}
\leq \frac{|\{i:\lambda_i\in I\}|}{N\eta_1}+\frac{2\eta_1}{\eta}\Im[m_0(E+\ri\eta)]\\
\leq &\frac{4\fd^2|\{i:\lambda_i\in I\}|}{N\eta}+\frac{1}{2\fd},
\end{split}
\end{align}
and the lower bound follows by rearranging.
\end{proof}

\begin{corollary}\label{c:mszsbound}
Assume the conditions  of Theorem \ref{t:mesoCLT}.  Let  $u=E+\ri\eta$ with $E\in[E_0-r, E_0+r]$ and $\eta\in [\eta_*, 1]$.  There exists a constant $C>0$ so that if $\Im[z_t(u)]> 0$, then 
\beq \label{e:demtbound}
|m_t(z_t(u))|\leq C\log N,
\eeq
and 
\beq \label{e:derbound}
|\del_t m_t(z_t(u))|\leq C\log N.
\eeq
\end{corollary}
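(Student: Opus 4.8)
The plan is to bound $|m_t(z_t(u))|$ by following the characteristic flow backward to time $0$ using the estimates of Proposition \ref{p:propzt}, and controlling the real part via the boundedness of $V'$ and the support of $\mu_t$. First I would note that by \eqref{e:mtbound}, $\Im[m_t(z_t(u))] \leq e^{tC} \Im[m_0(z_0(u))] = e^{tC}\Im[m_0(u)] \leq e^{tC}\fd$ by Assumption \ref{a:asumpL}, so the imaginary part is trivially $\OO(1)$. For the real part, I would use the representation $m_t(z_t) = \int (x - z_t)^{-1} \rd\mu_t(x)$. The real part is problematic only when $\Im[z_t(u)]$ is very small, since then $|m_t(z_t)|$ can be as large as $\Im[z_t]^{-1}$. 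The key is to show $\Im[z_t(u)]$ cannot be too small: from \eqref{e:ztbound}, $\Im[z_t(u)] \geq e^{-Ct}(\Im[u] - C^{-1}(e^{Ct}-1)\Im[m_0(u)])$. Since $\Im[u] = \eta \geq \eta_*$ and $t \gg \eta_*$ is \emph{not} assumed here — wait, actually in this subsection we only have $\eta \in [\eta_*, 1]$ — I need a lower bound that survives. A cleaner route: if $\Im[z_t(u)] > 0$, then combining \eqref{e:msmtbound} with the lattice-free version of \eqref{e:z0ztbound}-type reasoning, namely that $\Im[z_s(u)] \geq e^{-C(t-s)}\Im[z_t(u)]$, together with the fact that along the flow $\Im[z_s]$ is continuous and starts at $\eta$, gives control; but ultimately the bound $|m_t(z_t)| \leq \dist(z_t, \supp\mu_t)^{-1} \vee (\text{something})$ is too weak alone.

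The better approach, which I expect to be the intended one, is to integrate \eqref{e:mtzt} directly. We have $\del_s m_s(z_s) = \tfrac{1}{2} m_s(z_s)\del_z V'(z_s) + \int_{\bR} g(z_s,x)\rd\mu_s(x)$, where by Proposition \ref{e:gbound} both $|\del_z V'(z_s)| \leq C$ and $|g(z_s,x)| \leq C$, so $|\del_s m_s(z_s)| \leq C|m_s(z_s)| + C$. Hence Gronwall gives $|m_t(z_t)| \leq e^{Ct}(|m_0(z_0)| + Ct) = e^{Ct}(|m_0(u)| + Ct)$. So everything reduces to bounding $|m_0(u)|$ for $u = E + \i\eta$ with $\eta \in [\eta_*, 1]$. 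For this I would use the preceding Lemma \eqref{e:bounddensity}: write $|m_0(u)| \leq \int |x - u|^{-1}\rd\mu_0(x)$, split the integral dyadically in $|x - E|$ over scales $2^k \eta$ for $\eta \leq 2^k\eta \leq \OO(1)$ (using $\supp\mu_0 \subseteq [-\fa,\fa]$), and on each annulus $\{|x-E| \asymp 2^k\eta\}$ bound the $\mu_0$-mass by $\fd \cdot 2^{k+1}\eta$ via the upper bound in \eqref{e:bounddensity} (applicable once $2^k\eta \geq 4\fd^2\eta_*$, which holds since $\eta \geq \eta_*$ up to adjusting constants), contributing $\OO(2^k\eta)/(2^k\eta) = \OO(1)$ per scale; for the innermost scale $|x - E| \leq \eta$ use $|x - u|^{-1} \leq \eta^{-1}$ and mass $\leq \fd\cdot 2\eta$ to get $\OO(1)$. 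Summing over the $\OO(\log N)$ dyadic scales (there are $\OO(\log(1/\eta_*)) = \OO(\log N)$ of them since $\eta_* \geq N^{-1}$) yields $|m_0(u)| = \OO(\log N)$. Then \eqref{e:demtbound} follows since $e^{Ct} = \OO(1)$ for $t \ll 1$, and \eqref{e:derbound} follows immediately by plugging $|m_t(z_t)| \leq C\log N$ back into $|\del_t m_t(z_t)| \leq C|m_t(z_t)| + C$.

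The main obstacle is the logarithmic factor: one must verify that the number of dyadic scales contributing is genuinely $\OO(\log N)$ and not worse, which relies on the lower cutoff $\eta \geq \eta_*$ being no smaller than a power of $N^{-1}$ — this is exactly why the statement carries a $\log N$ rather than an $\OO(1)$ bound, since near the bottom scale $\eta_*$ one cannot do better than the trivial pointwise bound $\Im[m_0] \asymp \eta^{-1}\cdot(\text{local mass})$. I would also double-check that Assumption \ref{a:asumpL}'s lower bound $\Im[m_0(z)] \geq \fd^{-1}$ is what feeds the lower density estimate in \eqref{e:bounddensity} (it does, via the preceding lemma), and that the constant $C$ in the corollary may depend on $\fa, \fd, \fK$ and $V$ but not on $N$. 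No delicate cancellation is needed; the argument is a routine Gronwall estimate plus a dyadic decomposition of a Stieltjes transform against a density with controlled local mass.
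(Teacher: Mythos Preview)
Your proposal is correct and matches the paper's proof essentially step for step: the paper also reduces to bounding $|m_0(u)|$ via Gronwall applied to $|\del_s m_s(z_s)| \leq C(|m_s(z_s)|+1)$ (using Proposition \ref{e:gbound}), then bounds $|m_0(u)|$ by a dyadic decomposition against the density estimate \eqref{e:bounddensity}, with $\OO(\log N)$ scales arising from $\eta \geq \eta_* \geq N^{-1}$. The only cosmetic difference is that the paper sets the innermost scale to $\eta_1 = 4\fd^2\eta$ to make \eqref{e:bounddensity} directly applicable, which is exactly the adjustment you anticipated.
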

\begin{proof}
For $t=0$, let $\eta_1=4\fd^2\eta$. By a dyadic decomposition we have
\begin{align}\begin{split}
|m_0(u)|
\leq&\frac{1}{N}\left( \sum_{|\lambda_{i}-E|\leq \eta_1}\frac{1}{\eta}+\sum_{k=1}^{\lfloor -\log_2(\eta_1)\rfloor}\sum_{2^{k}\eta_1\geq |\lambda_{i}-E|\geq 2^{k-1}\eta_1}\frac{1}{|\lambda_i-E|}+\sum_{|\lambda_{i}-E|\geq 1/2}\frac{1}{|\lambda_i-E|}\right)\\
\leq& 2\fd\eta_1/\eta-4\fd\log_2 \eta_1+2\leq C\log N.
\end{split}\end{align}
By Proposition \ref{e:gbound} we have that $|\del_z V' (z) |\leq C$ and $| g(z, x) | \leq C$ and so
\beq \label{tt1}
\left|\del_s m_s(z_s(u)\right|\leq C( |m_s(z_s)|+1),
\eeq
and therefore,
\beq \label{tt2}
|m_t(z_t(u))|\leq e^{Ct}(|m_0(z_0(u))|+1)=\OO(\log N).
\eeq
The claim follows. 
\end{proof}
We now derive estimates on quantities appearing in our analysis of $m_t$.
\begin{lemma}\label{l:zsinDs}
Assume that  Assumption \ref{a:asumpL} holds. Let $t\ll (\log N)^{-1}r\wedge (\log N)^{-2}$, and $u\in\{E+\ri\eta: E\in[E_0-r, E_0+r],\eta\in [\eta_*, 1]\}$. If $z_t(u)\in \{E+\ri\eta: E\in[E_0-r/2, E_0+r/2], M^2/N\ll \eta\ll t\}$, then for $0\leq s\leq t$, we have that $z_s\in \cal D_s$ as defined in \eqref{def:dom}, and moreover,
\begin{align}\label{e:integralbound}
\int_0^t\frac{\rd s}{\Im[z_s]^p}\leq 2\fd \int_0^t\frac{\Im[m_s(z_s)]}{\Im[z_s]^p}\rd s \leq \begin{cases} C\log N, & 
p=1 \\ \frac{C}{\Im[z_t]^{p-1}},& p>1 \end{cases}
\end{align}
\end{lemma}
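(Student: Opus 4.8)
The plan is to synthesize the lemma from three earlier ingredients: the propagation estimates of Proposition~\ref{p:propzt}, the domain-monotonicity Proposition~\ref{p:dom}, and the regularity hypothesis Assumption~\ref{a:asumpL}. The crucial first step is a \emph{uniform lower bound} $\Im[m_s(z_s)]\geq \frac{1}{2\fd}$ for all $0\leq s\leq t$. Since $z_t(u)\in\bC_+$, estimate \eqref{e:msmtbound} gives $\Im[z_s]\geq e^{-C(t-s)}\Im[z_t]>0$ on $[0,t]$, so the characteristic never leaves $\bC_+$. As $z_0=u$ lies in $\{E+\ri\eta:E\in[E_0-r,E_0+r],\ \eta_*\leq\eta\leq1\}$, Assumption~\ref{a:asumpL}(2) gives $\Im[m_0(z_0)]\geq\fd^{-1}$; feeding this into \eqref{e:mtbound} yields $\Im[m_s(z_s)]\geq e^{-Cs}\fd^{-1}\geq\frac{1}{2\fd}$, using $s\leq t\ll(\log N)^{-2}$ so that $e^{-Cs}\geq\frac12$ for $N$ large.

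Next I would verify that $z_t(u)\in\dom_t$ and then invoke Proposition~\ref{p:dom} to conclude $z_s\in\dom_s$ for every $0\leq s\leq t$. In the definition \eqref{def:dom}, the hypothesis $\Im[z_t]\gg M^2/N$, the bound $\Im[m_t(z_t)]\geq\frac{1}{2\fd}$ just established, the choice $M=(\log N)^{2+2\delta}$ (so that $M\log N\ll M^2$), $e^{Kt}=\OO(1)$ since $t\ll(\log N)^{-2}$, and $\fc\geq1$ together give
\[
\Im[z_t]\;\gg\;\frac{M^2}{N}\;\geq\;\frac{e^{Kt}M\log N}{N\,\Im[m_t(z_t)]}\vee\frac{e^{Kt}}{N^{\fc}},
\]
while the remaining two defining conditions $\Im[z_t]\leq 3\fb-t$ and $|\Re[z_t]|\leq 3\fb-t$ are immediate, since $\Im[z_t]\ll t\ll1$ and $|\Re[z_t]|\leq |E_0|+r/2=\OO(1)$ with $\fb$ chosen large. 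Hence $z_t(u)\in\dom_t$, and Proposition~\ref{p:dom} gives the first assertion of the lemma.

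Finally, the chain \eqref{e:integralbound} follows mechanically. The first inequality is just the pointwise bound $1\leq 2\fd\,\Im[m_s(z_s)]$ from the first step, integrated against $\Im[z_s]^{-p}\,\rd s$. For the second, I apply \eqref{e:propzt} of Proposition~\ref{p:propzt}, available since $\Im[z_t]>0$: for $p>1$ it gives $2\fd\int_0^t\Im[m_s(z_s)]\Im[z_s]^{-p}\,\rd s\leq C\Im[z_t]^{-(p-1)}$ directly, and for $p=1$ it gives $2\fd\big(Ct+\log(\Im[z_0]/\Im[z_t])\big)$; since $\Im[z_0]=\Im[u]\leq1$ and $\Im[z_t]\gg M^2/N\geq N^{-1}$ we have $\log(\Im[z_0]/\Im[z_t])\leq\log N$, giving the $C\log N$ bound. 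The only point needing any care is the second step---checking the scale separations defining $\dom_t$---but this is purely bookkeeping once $\Im[m_s(z_s)]\geq\frac{1}{2\fd}$ is in hand; that lower bound, i.e.\ the forward-in-time persistence of the regularity of $\mu_0$, is the real substance of the lemma.
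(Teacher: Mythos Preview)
Your proof is correct and follows essentially the same route as the paper: both establish the uniform bound $\Im[m_s(z_s)]\geq(2\fd)^{-1}$ via \eqref{e:mtbound} and Assumption~\ref{a:asumpL}, then verify the defining inequalities of $\dom_s$, and finally read off \eqref{e:integralbound} from \eqref{e:propzt}. The only cosmetic difference is that you check $z_t\in\dom_t$ and invoke Proposition~\ref{p:dom} to propagate backward, whereas the paper re-verifies $z_s\in\dom_s$ directly for each $s$ (using \eqref{e:demtbound} and \eqref{e:msmtbound}); and for the $p=1$ logarithm you bound $\Im[z_0]/\Im[z_t]$ by $N$ using $\Im[z_t]\gg M^2/N$, while the paper cites \eqref{e:z0ztbound}.
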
 
\begin{proof}
Let $u$ be as in the statement of the lemma and denote $z_s = z_s (u)$.  By \eqref{e:demtbound}, we have $|\Re[z_s]|\leq r+Ct\log N\leq 3\fb-s$ and $\Im[z_s]\leq 1+Ct\log N\leq 3\fb-s$, since $t\leq (\log N)^{-2}$.  By the assumption $\fd^{-1}\leq \Im[m_0(u)]\leq \fd$ and the estimate \eqref{e:mtbound}, we have uniformly for any $0\leq s\leq t$, 
\beq\label{e:Immsbound}
(2\fd)^{-1}\leq \Im[m_s(z_s(u))]\leq 2\fd,
\eeq
since $t\ll1$. Moreover, by \eqref{e:msmtbound}, we have $\Im[z_s]\geq c\Im[z_t]\gg M^2/N$. Therefore, 
\begin{align*}
\frac{e^{Ks}M\log N}{N\Im[m_s(z_s)]}\vee \frac{e^{Ks}}{N^{\fc}}\ll \frac{M^2}{N}\ll \Im[z_s].
\end{align*}
It follows that $z_s(u)\in \dom_s$. 
Since $\Im[m_s(z_s)]\geq (2\fd)^{-1}$, we have
\begin{align}
\int_0^t\frac{\rd s}{\Im[z_s]^p}\leq 2\fd \int_0^t\frac{\Im[m_s(z_s)]}{\Im[z_s]^p}\rd s. 
\end{align}
The case $p=1$ estimate of
\eqref{e:integralbound} follows from \eqref{e:propzt} by using the estimate  $\Im[u]/\Im[z_t(u)]\leq CN$ of \eqref{e:z0ztbound}.  The case $p>1$ follows from \eqref{e:propzt}.
\end{proof}

\begin{lemma}
The following holds under the assumptions of Theorem \ref{t:mesoCLT}.  Let  $u=E+\ri\eta$ with $E\in[E_0-r, E_0+r]$ and $\eta\in [\eta_*, 1]$.   There exists a uniform constant $c>0$ so that If $\Im[z_t(u)]>0$, then 
\beq\label{e:boundRem}
1-t\Re[\del_z m_0(u)]\geq c
\eeq
\end{lemma}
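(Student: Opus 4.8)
The plan is to dispose of the trivial cases, reduce to producing a scale‑independent \emph{multiplicative} gap between $\Re[\del_z m_0(u)]$ and the a priori bound $\Im[m_0(u)]/\eta$ coming from Assumption \ref{a:asumpL}, and then show this gap beats the $\OO(t)$ error generated by the constraint $\Im[z_t(u)]>0$. Throughout write $u=E+\ri\eta$ and $h(\eta):=\Im[m_0(E+\ri\eta)]=\int \eta((x-E)^2+\eta^2)^{-1}\rd\mu_0(x)$, so that $\fd^{-1}\le h(\eta)\le \fd$, and record the elementary identity
\beq
\Re[\del_z m_0(u)]=\int \frac{(x-E)^2-\eta^2}{((x-E)^2+\eta^2)^2}\rd\mu_0(x)=\frac{h(\eta)}{\eta}-2\eta^2\int\frac{\rd\mu_0(x)}{((x-E)^2+\eta^2)^2}.
\eeq
If $\Re[\del_z m_0(u)]\le 0$ the claim is immediate. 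If $\eta\ge (4\fd^2)^{-1}$ then $\Re[\del_z m_0(u)]\le|\del_z m_0(u)|\le h(\eta)/\eta\le 4\fd^3$ by \eqref{dermtbound} and Assumption \ref{a:asumpL}, so $1-t\Re[\del_z m_0(u)]\ge \tfrac12$ once $N$ is large (since $t\ll(\log N)^{-2}$). Hence from now on I assume $\Re[\del_z m_0(u)]>0$ and $0<\eta<(4\fd^2)^{-1}$.

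The main step is to show that the lower bound on $\Im[m_0]$ at scale $\eta$, together with the upper bound at scale $K\eta$ with $K:=4\fd^2$ (which lies in $[\eta_*,1]$ by our reduction), forces $\int((x-E)^2+\eta^2)^{-2}\rd\mu_0$ to be large. Splitting $h(\eta)$ into the contributions of $\{|x-E|\le K\eta\}$ and its complement, using $\eta/((x-E)^2+\eta^2)\le \eta/(x-E)^2$ and comparing the tail with $\Im[m_0(E+\ri K\eta)]\le\fd$, the complement contributes at most $2\fd/K=(2\fd)^{-1}$, so the near part is at least $h(\eta)-(2\fd)^{-1}\ge (2\fd)^{-1}$. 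Since $(x-E)^2+\eta^2\le 2K^2\eta^2$ on $\{|x-E|\le K\eta\}$, this gives
\beq
\int\frac{\rd\mu_0(x)}{((x-E)^2+\eta^2)^2}\ \ge\ \frac{1}{2K^2\eta^2}\int_{|x-E|\le K\eta}\frac{\rd\mu_0(x)}{(x-E)^2+\eta^2}\ \ge\ \frac{1}{2K^2\eta^2}\cdot\frac{1}{2\fd\eta},
\eeq
and substituting into the identity above, together with $h(\eta)\le\fd$ in the correction term, yields $\Re[\del_z m_0(u)]\le \tfrac{h(\eta)}{\eta}\bigl(1-(2K^2\fd^2)^{-1}\bigr)$.

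Next I would exploit $\Im[z_t(u)]>0$. Integrating the imaginary part of \eqref{def:zt} gives $\Im[z_t(u)]=\eta-\int_0^t\Im[m_s(z_s)]\rd s-\tfrac12\int_0^t\Im[V'(z_s)]\rd s$; bounding $\Im[m_s(z_s)]\ge e^{-Cs}h(\eta)$ by \eqref{e:mtbound} and $|\Im[V'(z_s)]|\le C\Im[z_s]\le Ce^{Cs}\eta$ by Proposition \ref{e:gbound} and \eqref{e:dImzbound}, positivity of $\Im[z_t(u)]$ gives, for $t$ small, $t\,h(\eta)/\eta\le 1+Ct$ with $C$ a $V$‑dependent constant. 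Combining with the previous paragraph,
\beq
1-t\,\Re[\del_z m_0(u)]\ \ge\ \frac{1}{2K^2\fd^2}-Ct\ \ge\ \frac{1}{4K^2\fd^2}\ =:\ c,
\eeq
the last inequality holding once $Ct\le(4K^2\fd^2)^{-1}$, which is automatic for $N$ large; with $K=4\fd^2$ one may take $c=(64\fd^6)^{-1}$, which is uniform in $u,E,\eta,N$. The step I expect to be the real obstacle is the second one: one must check carefully that the quantitative lower bound on $\Im[m_0]$ truly ``propagates down'' to a lower bound on the second‑moment integral by a factor independent of $\eta$ — this scale‑independence is exactly what produces a genuine multiplicative gap below $h(\eta)/\eta$, and it is this gap (rather than the weaker Cauchy–Schwarz gap $2h(\eta)^2$) that is robust enough to survive the $\OO(Ct)$ loss coming from the $V'$–correction in the flow of $\Im[z_s]$.
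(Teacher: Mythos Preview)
Your proof is correct and follows essentially the same strategy as the paper: both use the identity $\Re[\del_z m_0(u)]=h(\eta)/\eta-2\eta^2\int((x-E)^2+\eta^2)^{-2}\rd\mu_0$, bound the correction term from below via the two-scale regularity of $\Im[m_0]$ (the paper through the counting estimate \eqref{e:bounddensity}, you directly through the Stieltjes transform at scale $K\eta$), and combine with the constraint $t\,h(\eta)/\eta\le 1+\OO(t)$ coming from $\Im[z_t(u)]>0$ and \eqref{e:ztbound}. The only difference is organizational---the paper splits into the cases $\eta\gtrless 2\fd t$ whereas you treat the main regime uniformly---but the underlying mechanism is identical.
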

\begin{proof}
By the upper bound in \eqref{e:ztbound}, since $\Im[z_t(u)]\geq 0$, we have
\beq \label{e:boundeta}
\eta=\Im[u]\geq \frac{1-e^{-Ct}}{C}\Im[m_0(u)]\geq \left(t-\frac{Ct^2}{2}\right)\Im[m_0(u)].
\eeq
We write the LHS of \eqref{e:boundRem} as
\beq \label{e:boundRem2}
1-t\Re[\del_z m_0(u)]=1-\frac{t}{\eta}\Im[m_0(u)]+\frac{t}{N}\sum_{i=1}^{N}\frac{2\eta^2}{|\lambda_i(0)-u|^4}.
\eeq
We consider the following  two cases:
\begin{enumerate}
\item If $\eta\geq 2\fd t$, then by \eqref{e:boundRem2} and assumption \eqref{e:asumpm0}, $1-t\Re[\del_z m_0(u)]\geq 1/2$.
\item If $\eta<2\fd t$, let $\eta_1=\eta\vee 4\fd^2 \eta_*\leq 4\fd^2\eta$. By combining \eqref{e:boundeta}, \eqref{e:boundRem2} and \eqref{e:bounddensity}, we have
\begin{align}\begin{split}
1-t\Re[\del_z m_0(u)]
\geq& -\frac{Ct^2}{2\eta}\Im[m_0(u)]+\frac{t}{N}\sum_{i:|\la_i(0)-E|\leq \eta_1}\frac{2\eta^2}{(2\eta_1^2)^2}\\
\geq& -\frac{C\fd t^2}{2\eta}+\frac{t}{2^{10}\fd^9\eta}
= \frac{t}{2^{10}\fd^9\eta}\left(1-2^9C\fd^{10} t\right)\geq \frac{1}{2^{12}\fd^{10}},
\end{split}\end{align}
where we used $t\ll1$.
\end{enumerate}
\end{proof}

In the following we derive the regularity of the Stieltjes transform of the limiting measure-valued process \eqref{e:mtzt}. As a preliminary we study the flow map $u\rightarrow z_s(u)$, and prove that it is Lipschitz. 
\begin{proposition}\label{prop:dzms}
Under the assumptions of Theorem \ref{t:mesoCLT} we have the following.  Let $u=E+\ri\eta$, such that $E\in[E_0-r, E_0+r]$ and $\eta\in [\eta_*, 1]$. If $\Im[z_t(u)]> 0$, then for $0\leq s\leq t$,
\begin{align}
\label{e:duzsbound}& c \leq |\del_x z_s(u)|, |\del_y z_s(u)| \leq C, \\
\label{e:dzmsbound}&|\del_z m_s(z_s(u))|=\OO\left(t^{-1}\right).
\end{align}
where the constants depend on $V'$ and $\fd$.
\end{proposition}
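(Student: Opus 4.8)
The plan is to reduce both bounds to the analysis of the scalar ODE satisfied by $h_s := \partial_z m_s(z_s(u))$ along the characteristic, after first observing that the hypothesis $\Im[z_t(u)]>0$ already pins down the relevant scale. Indeed, since $\Im[z_t(u)]\ge 0$, the upper bound in \eqref{e:ztbound} forces $\eta=\Im[u]\ge \tfrac{1-e^{-Ct}}{C}\Im[m_0(u)]\ge \tfrac{t}{2\fd}$ for $t$ small (using $\Im[m_0(u)]\ge\fd^{-1}$ from Assumption \ref{a:asumpL}), so by \eqref{dermtbound} we get the crucial a priori bound $|h_0|=|\partial_z m_0(u)|\le \Im[m_0(u)]/\eta\le 2\fd^2/t$. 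The second ingredient is the estimate \eqref{e:boundRem}, which will keep the relevant denominator away from zero.

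First I would derive the ODE for $h_s$. Differentiating \eqref{e:limitm} in $z$ (using the Helffer--Sj\"ostrand representation to rewrite the last term as $\int_\bR g(z,x)\,\rd\mu_t(x)$, and $V\in C^5$ together with Proposition \ref{e:gbound} so that all the relevant $z$-derivatives exist and are bounded), then restricting to $z=z_s$ and applying the chain rule and \eqref{def:zt}, the two terms involving $\partial_z^2 m_s$ cancel exactly, leaving
\[
\partial_s h_s = h_s^2 + h_s\,\partial_z V'(z_s) + \tfrac12 m_s(z_s)\,\partial_z^2 V'(z_s) + \int_\bR \partial_z g(z_s,x)\,\rd\mu_s(x) =: h_s^2 + b_s h_s + c_s .
\]
By the $C^5$ statements of Proposition \ref{e:gbound} we have $|b_s|\le C$ and $|\partial_z^2 V'|,\,|\partial_z g|\le C$, and by Corollary \ref{c:mszsbound} $|m_s(z_s)|\le C\log N$; hence $|b_s|\le C$ and $|c_s|\le C\log N$ on $[0,t]$.

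Next I would linearize: let $\phi_s$ solve $\phi_s'' = b_s\phi_s' - c_s\phi_s$ with $\phi_0=1$, $\phi_0'=-h_0$; then $-\phi_s'/\phi_s$ solves the same Riccati initial value problem as $h_s$, so $h_s=-\phi_s'/\phi_s$ as long as $\phi_s\neq 0$. A bootstrap/Gronwall estimate on $[0,t]$ for the linear system $(\phi_s,\phi_s')$ --- whose coefficient matrix has norm $O(\log N)$, while $t\ll(\log N)^{-2}$ so $t\log N=o(1)$ --- together with $|h_0|=O(1/t)$ yields $\phi_s = 1-sh_0+o(1)$ and $\phi_s' = -h_0+O(1)$ uniformly on $[0,t]$. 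Since $\Re[1-sh_0]=1-s\Re[h_0]\ge \min\{1,\,1-t\Re[h_0]\}\ge c$ for $s\in[0,t]$ (monotonicity in $s$ and \eqref{e:boundRem}), we get $|\phi_s|\ge c/2$, hence $\phi_s\neq 0$ and $|h_s|=|\phi_s'|/|\phi_s|\le C(|h_0|+1)=O(1/t)$, which is \eqref{e:dzmsbound}.

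Finally, \eqref{e:duzsbound} follows by differentiating \eqref{def:zt} in $x$ (and in $y$): $\partial_s(\partial_x z_s) = -h_s\,\partial_x z_s - \tfrac12\big(\partial_z V'(z_s)\,\partial_x z_s + \partial_{\bar z}V'(z_s)\,\overline{\partial_x z_s}\big)$ and likewise for $\partial_y z_s$, so $\big|\partial_s|\partial_x z_s|\big|\le (|h_s|+C)|\partial_x z_s|$; since $\int_0^t(|h_s|+C)\,\rd s=O(1)$ by \eqref{e:dzmsbound}, Gronwall in both directions gives $c\le|\partial_x z_s|\le C$, and similarly for $|\partial_y z_s|$, starting from $\partial_x z_0=1$, $\partial_y z_0=\ri$. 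The main obstacle is keeping the Riccati denominator $\phi_s$ uniformly bounded away from zero on $[0,t]$ --- this is precisely the role of the estimate \eqref{e:boundRem} --- while simultaneously preventing the $\log N$-sized lower-order coefficient $c_s$ from spoiling the linearized estimate, which is what the constraint $t\ll(\log N)^{-2}$ provides.
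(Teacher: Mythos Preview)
Your proof is correct, and both arguments share the same skeleton: derive a Riccati equation for $h_s=\partial_z m_s(z_s)$, control it using the input \eqref{e:boundRem} together with $t^2\log N=o(1)$, then recover \eqref{e:duzsbound} by Gronwall from $\int_0^t|h_s|\,\rd s=O(1)$. The execution, however, differs in two places.

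For the \emph{derivation} of the Riccati equation, the paper differentiates \eqref{def:zt} and \eqref{e:mtzt} in the real parameter $x=\Re[u]$; this produces coefficients $b_s,c_s$ carrying factors $1/\partial_x z_s$ (see \eqref{def:bscs}), and forces a stopping time $\sigma=\inf\{s:\partial_x z_s=0\}$ with a continuity argument at the end to show $\sigma=t$. You instead differentiate the holomorphic equation \eqref{e:limitm0} directly in $z$ and then use the identity defining $g(z,x)$ to see the $\partial_z^2 m_s$ terms cancel, arriving at $b_s=\partial_z V'(z_s)$ and $c_s=\tfrac12 m_s(z_s)\,\partial_z^2 V'(z_s)+\int\partial_z g(z_s,x)\,\rd\mu_s$, which are bounded ($|b_s|\le C$, $|c_s|\le C\log N$) without reference to $\partial_x z_s$. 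This is cleaner: it decouples \eqref{e:dzmsbound} from \eqref{e:duzsbound} and avoids the stopping-time bookkeeping.

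For the \emph{analysis} of the Riccati equation, the paper first bounds $\Re[h_s]$ via a differential inequality that it solves explicitly (the $\sqrt{\log N}\tan(\cdots)$ computation, using $t\sqrt{\log N}\ll1$), and then upgrades to $|h_s|=O(1/t)$ by a Gronwall argument on $|h_s|^2$. Your linearization $h_s=-\phi_s'/\phi_s$ with $\phi''_s=b_s\phi'_s-c_s\phi_s$ replaces both steps by a single bootstrap on the linear system (using $t^2\log N\ll1$ and $t|h_0|=O(1)$ to get $\phi_s=1-sh_0+O(t)$), with \eqref{e:boundRem} entering as the lower bound $\Re[1-sh_0]\ge c$ that keeps $|\phi_s|$ away from zero. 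The two arguments are equivalent in strength; yours is shorter and avoids the slightly delicate two-stage structure of the paper's proof.
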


\begin{proof}
For $s=0$, by \eqref{e:boundeta} we have
\begin{align}\begin{split}\label{e:initialbound}
|\del_{z}m_0(u)|&\leq \frac{1}{N}\sum_{i=1}^N\frac{1}{|\la_i(0)-u|^2}=\frac{\Im[m_0(u)]}{\Im[u]}=\OO\left( t^{-1}\right).
\end{split}\end{align}
By taking derivative with respect to $x$ on both sides of \eqref{def:zt}, we get
\beq\label{duzt}
\del_s\del_x z_s(u)=-\del_z m_s(z_s(u))\del_x z_s(u)+\frac{\del_x V'(z_s(u))}{2},\quad \del_x z_0(u)=1,
\eeq 
where $\del_x V'(z_s(u))=\del_z V'(z_s(u))\del_x z_s(u)+\del_{\bar z}V'(z_s(u))\del_x \bar{z}_s(u)$.
By taking derivative with respect to $x$ on both sides of $\eqref{e:mtzt}$, we have
\begin{align}\begin{split}\label{dumt}
&\del_s \left( \del_z m_s(z_s(u))  \right) \del_x z_s(u) +\del_z m_s(z_s(u))\del_s\del_x z_s(u)\\
=&\frac{\del_zm_s(z_s(u))\del_z V'(z_s(u))\del_x z_s(u) +m_s(z_s(u))\del_x \del_z V'(z_s(u))}{2} +\int_{\bR} \del_x g(z_s(u), w)\rd \mu_s(w),
\end{split}\end{align}
where $\del_x g(z_s(u), w)=\del_z g(z_s(u),w)\del_xz_s(u)+\del_{\bar z} g(z_s(u),w)\del_x\bar{z}_s(u)$.
Note that $\del_x z_0(u)=\del_x(x+\ri y)=1$.   We define
\begin{align}
\sigma=t\wedge \inf_{s\geq0} \{\del_x z_s(u)=0\}.
\end{align}
Then $0 < \sigma \leq t \ll(\log N)^{-2}$, and for any $0\leq s<\sigma$ we have $|\del_x \bar z_s(u)| = |\del_x z_s(u)|$.

By combining \eqref{duzt} and \eqref{dumt}, and rearranging we have
\beq \label{derdtmt}
\del_s \left[ \del_z m_s(z_s(u)) \right]=(\del_z m_s(z_s(u)))^2+2\del_z m_s(z_s(u)) b_s+c_s,
\eeq
where 
\begin{align}\begin{split}\label{def:bscs}
b_s = \frac{\del_x V'(z_s(u))}{4\del_x z_s(u)}+\frac{\del_z V'(z_s(u))}{4},\quad c_s=\frac{m_s(z_s(u))\del_x \del_z V'(z_s(u))}{2\del_x z_s(u)}+\frac{\int_{\bR} \del_x g(z_s(u), w)\rd \mu_s(w)}{\del_x z_s(u)}.
\end{split}\end{align}
Under the assumptions of Theorem \ref{t:mesoCLT} we have $\|V' (z) \|_{C^2} \leq C$ and $| \del_z g (z, w) |+|\del_{\bar z}g(z,w)| \leq C$ by Proposition \ref{e:gbound}.   Combining this with Corollary \ref{c:mszsbound} we have $|b_s| + |c_s| \leq C$ for $0 \leq s \leq \sigma$.

First we derive an upper bound for the real part of $\del_z m_s(z_s(u))$. It follows from taking real part on both sides of \eqref{derdtmt} that
\begin{align}\begin{split}
&\del_s \Re[\del_z m_s(z_s(u))] \notag\\
=& ( \Re [ \del_z m_s (z_s (u) ])^2 - ( \Im [ \del_z m_s (z_s (u) )  ] )^2 + 2 \Re [ \del_z m_s (z_s (u) )  ] \Re [ b_s ] - 2 \Im [ \del_z m_s (z_s (u) ) ] \Im [ b_s ] + \Re [ c_s ] \notag\\
\leq&  ( \Re [ \del_z m_s (z_s (u) )  ] )^2 + 2 \Re [ \del_z m_s (z_s (u) )  ] \Re [ b_s] + \Im [ b_s ] ^2 + \Re [ c_s ] \notag\\
= & ( \Re [ \del_z m_s (z_s (u) )  + b_s ] )^2 + \Re [ c_s - b_s^2 ].
\end{split}\end{align}
Therefore, we derive
\begin{align}\begin{split}
\del_s  ( \Re[\del_z m_s(z_s(u))] )_+ 
\leq& \left( ( \Re[\del_z m_s(z_s(u))] )_+ +C]\right)^2 +C\log N,
\end{split}\end{align}
with initial data $( \Re[\del_z m_0(z_0(u))] )_+\leq(1-c)/{t}$ from \eqref{e:boundRem}.
The above ODE is separable and by solving it explicitly and using the fact that 
 $\sqrt{\log N}t\ll 1$, we get 
\begin{align}\begin{split}
\Re[\del_z m_s(z_s(u))]
\leq& \sqrt{C\log N}\tan \left(\arctan\left(\frac{(1-c)/t+C}{\sqrt{C\log N}}\right)+\sqrt{C\log N}s\right)\\ 
\asymp&\sqrt{C\log N}\tan \left(\frac{\pi}{2}-\frac{(c-Ct)\sqrt{C\log N }t}{1-c+Ct}\right)\asymp\frac{1-c}{c t},
\end{split}\end{align}
uniformly for $0\leq s\leq \sigma$.
Therefore, there exists some constant $C$, so that $\Re[ \del_z m_s (z_s (u) ) ]\leq C/t$, uniformly for any $0\leq s\leq \sigma$. 

Using this we derive from \eqref{derdtmt} that,
\begin{align}\begin{split}
\del_s|\del_z m_s(z_s(u))|^2
=&2\Re[\del_s \del_z m_s(z_s(u)) \del_z \bar m_s(z_s(u))]\\
=& 2\Re[\del_z m_s(z_s(u))]| \del_z m_s(z_s(u))|^2
+4\Re[b_s] |\del_z m_s(z_s(u))|^2+2\Re[c_s \del_z \bar m_s(z_s(u))]\\
\leq& \frac{C}{t}| \del_z m_s(z_s(u))|^2+C(t\log N)^2.
\end{split}\end{align}
It follows by Gronwall's inequality that $|\del_z m_s(z_s(u))|=\OO(1/t)$ uniformly for $0\leq s\leq \sigma$.  Notice that $\del_x z_0(u)=1$, and that \eqref{duzt} implies
\begin{align}
\del_xz_s(u)=e^{\int_0^s -\del_z m_s(z_s(u)) + \frac{\del_z V'(z_s(u))}{2}+\frac{\del_{\bar z}V'(z_s(u))\del_x\bar{z}_s}{2\del_x z_s(u)}\rd \tau} \asymp 1.
\end{align}
uniformly for $0\leq s\leq \sigma$. Therefore, $\sigma=t$ and the estimates  \eqref{e:dzmsbound} and $|\del_xz_t(u)| \asymp 1$ are immediate consequences. The estimate $|\del_yz_t(u)| \asymp 1$ follows from the same argument.
%
%
\end{proof}

Finally, we have the following results for the regularity of $m_t (w)$.
\begin{corollary}\label{c:uniformest}
Suppose that the assumptions of Theorem \ref{t:mesoCLT} hold, and let $\eta_*\ll t\ll (\log N)^{-1}r\wedge (\log N)^{-2}$.  We have,
\begin{enumerate}
\item[\rn{1})] For any $w\in \{E+\ri\eta: E\in[E_0-3r/4, E_0+3r/4], 0< \eta\leq 3/4\}$, we have that $z_t^{-1}(w)\subset \{E+\ri\eta: E\in[E_0-r, E_0+r], \eta_*\leq \eta\leq 1\}$, and $\del_zm_t(w)=\OO(1/t)$.
\item[\rn{2})] Fix $u\in\{E+\ri\eta: E\in[E_0-r, E_0+r],\eta\in [\eta_*, 1]\}$. If $z_t(u)\in \{E+\ri\eta: E\in[E_0-r/2, E_0+r/2], 0< \eta\ll t\}$, then for $0\leq s\leq t$, and any $w\in \bC_+$ such that $|w-z_s(u)|\leq \Im[z_s(u)]/2$, we have $|\del_z m_s(w)|=\OO(1/t)$.
\end{enumerate}
In both statements, the implicit constants depend on $V$ and $\fd$.
\end{corollary}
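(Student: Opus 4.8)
Both parts are reduced to Proposition \ref{prop:dzms} by exhibiting the point at which $\del_z m$ is evaluated as $z_s(u')$ for some $u'$ in the ``good set'' $\cG:=\{E+\ri\eta:E\in[E_0-r,E_0+r],\ \eta\in[\eta_*,1]\}$, on which that proposition (or its proof) delivers $\del_z m_s(z_s(u'))=\OO(1/t)$.

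\textbf{Part \rn{1}).} Fix $w=E'+\ri\eta'$ with $E'\in[E_0-3r/4,E_0+3r/4]$ and $0<\eta'\leq 3/4$; call this box $\cB$, and set $\cG_t:=\cG\cap\Omega_t$ with $\Omega_t$ as in Proposition \ref{prop:invflow}. For $u\in\cG_t$ one has $\Im[z_t(u)]>0$, so Corollary \ref{c:mszsbound} applies at every intermediate time and $|z_s(u)-u|\leq\int_0^t(|m_\sigma(z_\sigma(u))|+C)\,\rd\sigma\leq Ct\log N$ for $0\leq s\leq t$. Consequently $z_t$ sends the vertical faces $\Re[u]=E_0\pm r$ of $\partial\cG$ into $\{|\Re[z]-E_0|\geq r-Ct\log N\}$ and the top face $\Im[u]=1$ into $\{\Im[z]\geq 1-Ct\log N\}$, while the bottom face $\{\Im[u]=\eta_*\}$ does not meet $\Omega_t$: there \eqref{e:asumpm0} gives $\Im[m_0(u)]\geq\fd^{-1}$, and feeding this through \eqref{e:mtbound} into $\del_s\Im[z_s]\leq-\Im[m_s(z_s)]+C\Im[z_s]$ forces $\Im[z_s(u)]$ to vanish at a time $\lesssim\eta_*\ll t$. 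Since $z_t:\Omega_t\to\bC_+$ is a homeomorphism, $\partial(z_t(\cG_t))\subseteq z_t(\partial\cG\cap\Omega_t)$, which by the above is disjoint from $\cB$ because $t\log N\ll r$; as $\cB$ is connected and $E_0+\ri/2\in z_t(\cG_t)$ (its reverse image lies in $\cG_t$, as one sees from the reverse flow using $|m_{t-s}(\cdot)|\leq 1/\Im[\cdot]$, which here is bounded since the reverse flow stays at height $\gtrsim 1$), we conclude $\cB\subseteq z_t(\cG_t)$, i.e. $z_t^{-1}(w)\in\cG$. Proposition \ref{prop:dzms} now yields $\del_z m_t(w)=\OO(1/t)$.

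\textbf{Part \rn{2}).} Here $u\in\cG$ and $z_t(u)\in\{E\in[E_0-r/2,E_0+r/2],\ 0<\eta\ll t\}$. The lower bound in \eqref{e:ztbound} together with $\Im[m_0(u)]\leq\fd$ gives $\Im[u]\leq e^{Ct}\Im[z_t(u)]+\tfrac{e^{Ct}-1}{C}\fd=\OO(t)$, while $\Im[z_t(u)]>0$ forces $\Im[u]\geq ct$ via \eqref{e:boundeta}; so $\Im[u]\asymp t$, and by \eqref{e:dzt1} together with $\Im[m_s(z_s(u))]\asymp 1$ (from \eqref{e:asumpm0}, \eqref{e:mtbound}) one has $\Im[z_s(u)]\asymp(t-s)+\Im[z_t(u)]=\OO(t)$ throughout $0\leq s\leq t$. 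Fix such an $s$ and $w$ with $|w-z_s(u)|\leq\Im[z_s(u)]/2$. Since $u'\mapsto z_s(u')$ has derivatives $\asymp 1$ on $\cG$ (\eqref{e:duzsbound}) and is injective there, $w=z_s(u_-)$ with $u_-:=z_s^{-1}(w)$ and $|u_--u|\leq C\Im[z_s(u)]/2=\OO(t)$, hence $\Re[u_-]\in[E_0-r,E_0+r]$ and $\Im[u_-]\leq 1$. For $\Im[u_-]\gtrsim t$ I run the reverse flow $y_\sigma(w)$ of \eqref{e:invflow}: a Poisson-integral comparison over the disk of radius $\Im[z_s(u)]/2$ gives $\Im[m_s(w)]\asymp\Im[m_s(z_s(u))]\asymp 1$, and a bootstrap keeping $y_\sigma(w)$ within $\OO(t)$ of $z_{s-\sigma}(u)$, hence in $\cG$, yields $\del_\sigma\Im[y_\sigma(w)]\geq c>0$, so $\Im[u_-]=\Im[y_s(w)]\geq\Im[w]+cs\geq\tfrac12\Im[z_s(u)]+cs\gtrsim t$. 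With $u_-\in\cG$ and $\Im[u_-]\gtrsim t$, the proof of Proposition \ref{prop:dzms} applies verbatim: its only use of the hypothesis $\Im[z_t(\cdot)]>0$ is to produce, through \eqref{e:boundeta}, a base point of imaginary part $\gtrsim t$, equivalently the bound \eqref{e:boundRem}, which here holds directly since $\Im[u_-]\gtrsim t$ and $\Im[m_0(u_-)]\leq\fd$. This gives $\del_z m_s(w)=\del_z m_s(z_s(u_-))=\OO(1/t)$.

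\textbf{Main difficulty.} The ODE comparisons above are all of the kind carried out in Section \ref{sec:mztzt} and in the proof of Proposition \ref{prop:dzms}; the real work is the bookkeeping in Part \rn{2}), namely showing that the disk of radius $\Im[z_s(u)]/2$ about $z_s(u)$---whose radius can be far smaller than $t$ when $s$ is close to $t$---is the diffeomorphic image under $z_s$ of a piece of $\cG$ still located at height $\gtrsim t$. In other words, the smallness of $\Im[z_s(u)]$ reflects no loss of regularity of $m_s$ near the characteristic: the $\OO(1/t)$ bound is entirely an effect of the flow having run for a time $\asymp t$. Making the ``$y_\sigma(w)$ stays near $z_{s-\sigma}(u)$'' bootstrap quantitative and uniform as $s\to t$, where $\Im[z_s(u)]$ degenerates, is the crux of the argument.
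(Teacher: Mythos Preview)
Your Part \rn{1}) is correct and in fact more carefully argued than the paper's version; both reach the same conclusion by locating $z_t^{-1}(w)$ in $\cG$ and invoking Proposition \ref{prop:dzms}.

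Your Part \rn{2}) contains a genuine gap. The step ``the bound \eqref{e:boundRem} holds directly since $\Im[u_-]\gtrsim t$ and $\Im[m_0(u_-)]\leq\fd$'' is not justified, because the \emph{implicit constant} in $\Im[u_-]\gtrsim t$ matters. Your reverse-flow bootstrap yields $\del_\sigma\Im[y_\sigma]\geq c$ with $c$ of order $(2\fd)^{-1}$, so you only obtain $\Im[u_-]\geq c_0 t$ with $c_0\asymp\fd^{-1}$. Trace through the proof of Proposition \ref{prop:dzms}: the Riccati inequality $\del_s X\leq (X+C)^2+C\log N$ with initial data $X(0)\leq A/t$ stays bounded by $\OO(1/t)$ on $[0,s]$ only if $s<t/A$; with $A=\fd/c_0\asymp\fd^2$ this fails for $s$ close to $t$. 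Equivalently, in the lemma establishing \eqref{e:boundRem}, Case 1 needs $\eta\geq 2\fd t$ and Case 2 uses the \emph{precise} relation \eqref{e:boundeta}, not merely $\eta\gtrsim t$. Your bootstrap, which you yourself flag as the crux, is also not obviously closable: the Gronwall factor picked up by $|y_\sigma-z_{s-\sigma}(u)|$ along the reverse flow involves $\exp\bigl(\int_0^\sigma \Im[m_{s-\tau}]/\Im[z_{s-\tau}(u)]\,\rd\tau\bigr)$, which by \eqref{e:propzt} is a power of $\Im[z_{s-\sigma}(u)]/\Im[z_s(u)]$ with exponent not a priori $\leq 1$.

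The paper sidesteps both issues by splitting at $s=t/2$. For $s\leq t/2$ one has $\Im[z_s(u)]\asymp t$ (from \eqref{e:ztbound} and $\Im[u]\asymp t$), hence $\Im[w]\gtrsim t$; showing $w=z_s(u')$ with $u'\in\cG$ then gives $\Im[m_s(w)]=\OO(1)$ via \eqref{e:mtbound}, and the elementary bound $|\del_z m_s(w)|\leq\Im[m_s(w)]/\Im[w]=\OO(1/t)$ suffices---no appeal to the Riccati analysis at all. For $s\geq t/2$ one simply applies Part \rn{1}) at time $s$ (valid since $\eta_*\ll s\ll(\log N)^{-1}r$), obtaining $\del_z m_s(w)=\OO(1/s)=\OO(1/t)$ directly. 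This dichotomy is both shorter and avoids the constant-tracking that derails your unified approach.
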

\begin{proof}
We first consider the first statement in {\rn{1}}). Uniformly for any $u\in\{E+\ri\eta: E\in[E_0-r, E_0+r], \eta_*< \eta\leq 1\}\cap \Omega_t$ (with $\Omega_t$ as in Proposition \ref{prop:invflow}), we have by \eqref{e:ztbound}, \eqref{e:demtbound} and \eqref{def:zt}, that there exists a constant $C$ depending on $V$ and $\fd$, such that 
\begin{align}\begin{split}
\max\left\{0, \Im[u]-2tC\Im[m_0(u)]\right\}&\leq \Im[z_t(u)]\leq e^{Ct}\left(\Im[u]-\frac{1-e^{-Ct}}{C}\Im[m_0(u)]\right),\\
\Re[u]-Ct\log N &\leq \Re[z_t(u)]\leq \Re[u]+Ct\log N.
\end{split}\end{align}
By Proposition \ref{prop:invflow}, $z_t$ is surjective from $\Omega_t$ onto $\bC_+$.
The first statement in \rn{1}) follows from the assumptions $t\gg\eta_*$ and $r\gg t\log N$. 
The second statement  in \rn{1}),  is then a consequence of \eqref{e:dzmsbound} and the equality 
$\del_zm_t(w)=\del_zm_t(z_t(z_t^{-1}(w)))$.

For \rn{2}), since $\Im[m_0(u)]=\OO(1)$, it follows from \eqref{e:ztbound} that $ \Im[u]=t\Im[m_0(u)]+o(t)$. If $s\leq t/2$, then we see that by \eqref{e:ztbound} that $t / C \leq \Im [ z_s (u ) ] \leq Ct $ for some $C>0$.  
Furthermore, by \eqref{e:demtbound} and \eqref{def:zt} we see that $\Re[u] - C t \log (N) \leq \Re [ z_s (u) ] \leq \Re[u] + C t \log (N)$.  
We also observe that $\Im[w]\geq t/2C$. It follows from the same argument as in \rn{1}) that $\{w\in \bC_+: |w-z_s(u)|\leq \Im[z_s(u)]/2\} \subseteq z_s(\{E+\ri\eta: E\in[E_0-r, E_0+r],\eta\in [\eta_*, 1]\}\cap \Omega_s)$. Therefore, by \eqref{e:mtbound}, uniformly for $\{w\in \bC_+: |w-z_s(u)|\leq \Im[z_s(u)]/2\}$, $\Im[m_s(w)]=\OO(\Im[m_0(z_s^{-1}(w))])=\OO(1)$, and therefore
\beq
|\del_z m_s(w)|\leq \frac{\Im[m_s(w)]}{\Im[w]}=\OO\left(\frac{1}{t}\right).
\eeq

If $s\geq t/2$, from \rn{1}), uniformly for any $w\in \{E+\ri\eta: E\in[E_0-3r/4, E_0+3r/4], 0< \eta\leq  3/4\}$, $\del_zm_s(w)=\OO(1/s)=\OO(1/t)$. Moreover, we have $\{w\in \bC_+: |w-z_s(u)|\leq \Im[z_s(u)]/2\} \subseteq \{E+\ri\eta: E\in[E_0-3r/4, E_0+3r/4], 0< \eta\leq 3/4\}$. The statement follows. 
%
%
%
%
%
\end{proof}

%


\subsection{Proof of Theorem \ref{t:mesoCLT}}

Using regularity of $m_t$ and the local law we infer the following regularity for the empirical Stieltjes transform $\td m_t$.
\begin{lemma}\label{dpbound}
Suppose that the assumptions of Theorem \ref{t:mesoCLT} hold.  Let $\eta_*\ll t\ll (\log N)^{-1}r\wedge (\log N)^{-2}$. Fix $u\in\{E+\ri\eta: E\in[E_0-r, E_0+r],\eta\in [\eta_*, 1]\}$. If $z_t(u)\in \{E+\ri\eta: E\in[E_0-r/2, E_0+r/2], 0< \eta\ll t\}$, then on the event $\Omega$ (as defined in the proof of Proposition \ref{p:error}), we have the following estimate uniformly for $0\leq s\leq t$,
\beq \label{dptdms}
\del_z^p \td m_s(z_s(u))=\OO\left(\frac{M}{N\Im[z_s(u)]^{p+1}}+\frac{1}{t\Im[z_s(u)]^{p-1}}\right)
\eeq
\end{lemma}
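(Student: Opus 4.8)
The plan is to deduce \eqref{dptdms} from the $p=0$ input — the local law of Theorem~\ref{t:rigidity}, valid on $\Omega$ — together with the regularity estimate $\del_z m_s=\OO(1/t)$ of Corollary~\ref{c:uniformest}(\rn{2}), via Cauchy's integral formula on a small disk centred at $z_s(u)$. Throughout, fix $u$ and $s$ as in the statement and write $z_s=z_s(u)$, $\eta_s=\Im[z_s]$, and $\bar D_s:=\{w\in\bC: |w-z_s|\le \tfrac12\eta_s\}$.

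First I would record that $z_s\in\dom_s$ for every $0\le s\le t$ — this is the content of Lemma~\ref{l:zsinDs} (in particular $\eta_s\gg M^2/N$) — and strengthen this to $\bar D_s\subset\dom_s$. The verification of the latter is the one slightly delicate point. On $\bar D_s$ one has $\Im[w]\asymp\eta_s$ and, since $|\Re[z_s]|\le E_0+r/2+o(1)$ and $\eta_s\le 1+Ct\log N$ while $t\le(\log N)^{-2}$, also $\Re[w]$ and $\Im[w]$ well within $[-(3\fb-s),3\fb-s]$. That $\Im[m_s(w)]\asymp 1$ on $\bar D_s$ follows from $\Im[m_s(z_s)]\asymp 1$ (the bound \eqref{e:Immsbound} from the proof of Lemma~\ref{l:zsinDs}) together with the fact that $m_s$ is $\OO(1/t)$-Lipschitz on $\bar D_s$ by Corollary~\ref{c:uniformest}(\rn{2}), so the oscillation of $\Im[m_s]$ across $\bar D_s$ is $\OO(\eta_s/t)=o(1)$ because $\eta_s\ll t$. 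Consequently $e^{Ks}M\log N/(N\Im[m_s(w)])\vee e^{Ks}/N^{\fc}\ll M^2/N\ll\eta_s\le 2\Im[w]$ for all $w\in\bar D_s$, which gives $\bar D_s\subset\dom_s$.

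Now, on the event $\Omega$ one has $\sigma=t$ (as shown in the proof of Theorem~\ref{t:rigidity}), so the local law \eqref{e:diffmm} holds on $\dom_s$ for every $0\le s\le t$; in particular $|\td m_s(w)-m_s(w)|\le M/(N\Im[w])\le 2M/(N\eta_s)$ for all $w\in\bar D_s$. Since $\td m_s$ and $m_s$ are holomorphic on $\bC_+$, Cauchy's integral formula on the circle $\{|w-z_s|=\tfrac14\eta_s\}\subset\bar D_s$ gives
\[
\bigl|\del_z^p(\td m_s-m_s)(z_s)\bigr|\le \frac{p!\,\sup_{\bar D_s}|\td m_s-m_s|}{(\tfrac14\eta_s)^p}=\OO\!\left(\frac{M}{N\eta_s^{p+1}}\right).
\]
Likewise, applying Cauchy's formula (for $p-1$ derivatives) to the holomorphic function $w\mapsto\del_z m_s(w)$ on the same circle, and using $\sup_{\bar D_s}|\del_z m_s|=\OO(1/t)$ from Corollary~\ref{c:uniformest}(\rn{2}), gives
\[
\bigl|\del_z^p m_s(z_s)\bigr|=\bigl|\del_z^{p-1}\!\bigl(\del_z m_s\bigr)(z_s)\bigr|\le \frac{(p-1)!\,\sup_{\bar D_s}|\del_z m_s|}{(\tfrac14\eta_s)^{p-1}}=\OO\!\left(\frac{1}{t\,\eta_s^{p-1}}\right).
\]
Adding the two displays yields \eqref{dptdms}. (For $p=1$ the second estimate is simply Corollary~\ref{c:uniformest}(\rn{2}) applied at $w=z_s$, with no contour integral needed.)

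The main obstacle is the first step: verifying that the small disk $\bar D_s$ — and not merely the point $z_s$ — lies in $\dom_s$ uniformly over $s\in[0,t]$, which hinges on the lower bound $\eta_s\gg M^2/N$ (so the $N$-dependent constraints defining $\dom_s$ are inactive) and on the near-constancy of $\Im[m_s]$ across $\bar D_s$ supplied by the Lipschitz bound of Corollary~\ref{c:uniformest}(\rn{2}). Once this is in hand, the remainder is a routine application of the Cauchy estimates together with the already-established local law and the regularity of $m_s$.
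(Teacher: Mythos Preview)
Your approach is essentially the paper's: split $\del_z^p\td m_s(z_s)$ into $\del_z^p(\td m_s-m_s)(z_s)+\del_z^p m_s(z_s)$ and bound each piece by Cauchy's formula on a circle of radius $\asymp\eta_s$, using respectively the local law \eqref{e:diffmm} and the bound $|\del_z m_s|=\OO(1/t)$ from Corollary~\ref{c:uniformest}(\rn{2}). The paper in fact does not explicitly verify that the contour lies in $\dom_s$, so you are being more careful than the original on that point.

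There is, however, one small slip in your domain verification: the claim ``$\eta_s\ll t$'' does not hold uniformly in $s\in[0,t]$. For $s$ near $0$ one has $\eta_s=\Im[z_s]\asymp\Im[u]\asymp t$ (by \eqref{e:ztbound}, since the flow takes $\Im[z]$ from $\Im[u]$ down to $\Im[z_t]\ll t$ at speed $\Im[m_s(z_s)]\asymp 1$), so the oscillation bound $\OO(\eta_s/t)$ is merely $\OO(1)$ and does not by itself yield a lower bound on $\Im[m_s(w)]$. The fix is elementary and does not require the Lipschitz estimate at all: for $w\in\bar D_s$ and $x\in\bR$ one has $|x-w|\le|x-z_s|+\tfrac12\eta_s\le\tfrac32|x-z_s|$ (since $|x-z_s|\ge\Im[z_s]=\eta_s$), whence
\[
\Im[m_s(w)]=\int_\bR\frac{\Im[w]}{|x-w|^2}\,\rd\mu_s(x)\ge\frac{\eta_s}{2}\cdot\frac{4}{9}\int_\bR\frac{\rd\mu_s(x)}{|x-z_s|^2}=\frac{2}{9}\,\Im[m_s(z_s)]\asymp 1.
\]
With this in place your verification that $\bar D_s\subset\dom_s$ goes through, and the remainder of your argument is correct and coincides with the paper's proof.
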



\begin{proof}
The estimate \eqref{dptdms} is a consequence of the following two statements.
\begin{align}
\label{dpmsdiff}&\del_z^p \left(\td m_s(z_s(u))-m_s(z_s(u))\right)=\OO\left(\frac{M}{N\Im[z_s(u)]^{p+1}}\right),\\
\label{dpms}&\del_z^p m_s(z_s(u))=\OO\left(\frac{1}{t\Im[z_s(u)]^{p-1}}\right).
\end{align}

For \eqref{dpmsdiff}, since both $\td m_s$ and $m_s$ are analytic on the upper half plane, by Cauchy's integral formula
\beq
\del_z^p \left(\td m_s(z_s(u))-m_s(z_s(u))\right)
=\frac{p!}{2\pi \ri}\oint_{\cC} \frac{\td m_s(w)-m_s(w)}{(w-z_s(u))^{p+1}}\rd w ,
\eeq
where $\cC$ is a small contour in the upper half plane centering at $z_s(u)$ with radius $\Im[z_s(u)]/2$. On the event $\Omega$, we use  \eqref{e:diffmm} in Theorem \ref{t:rigidity} to bound the integral by 
\beq
\left|\frac{p!}{2\pi \ri}\oint_{\cC} \frac{\td m_s(w)-m_s(w)}{(w-z_s(u))^{p+1}}\rd w\right|
\leq \frac{p!}{2\pi }\oint_{\cC} \frac{|\td m_s(w)-m_s(w)|}{|w-z_s(u)|^{p+1}}\rd w
=\OO\left(\frac{M}{N\Im[z_s(u)]^{p+1}}\right).
\eeq
For \eqref{dpms}, Cauchy's integral formula leads to
\begin{align}\begin{split}
\left|\del_z^p  m_s(z_s(u))\rd s\right|
\leq \frac{(p-1)!}{2\pi}\oint_{\cC} \frac{\left|\del_z m_s(w)\right|}{|w-z_s(u)|^{p}}\rd w=\OO\left(\frac{1}{t\Im[z_s(u)]^{p-1}}\right).
\end{split}\end{align}
where we used \rn{2}) in Corollary \ref{c:uniformest} which states that $\left|\del_z m_s(w)\right|=\OO(1/t)$.
\end{proof}

By \rn{1}) in Corollary \ref{c:uniformest}, $\{E+\ri \eta: E\in[E_0-r/2, E_0+r/2], M^2/N\ll \eta\ll t\}\subseteq z_t(\{E+\ri\eta: E\in[E_0-r, E_0+r], \eta\in [\eta_*, 1]\}\cap \Omega_t)$. In the following, we fix some $u\in\{E+\ri\eta: E\in[E_0-r, E_0+r], \eta\in [\eta_*, 1]\}$, such that $z_t(u)\in\{E+\ri \eta: E\in[E_0-r/2, E_0+r/2], M^2/N\ll \eta\ll t\}$. 
%
By Lemma \ref{l:zsinDs}, $z_t\in \dom_t$, and the local law of  Theorem \ref{t:rigidity} holds.

We integrate both sides of \eqref{e:diffm}, and get the following integral expression for $\td m_t(z_t)$, 
\begin{align}\label{e:intdiffm}\begin{split}
&\td m_t(z_t)-m_t(z_t)=\int_0^t\left(\td m_s(z_s)-m_s(z_s)\right)\del_z \left(\td m_s(z_s)+\frac{V'(z_s)}{2}\right)\rd s\\
+& \frac{1}{\pi}\int_0^t\int_{\bC} \del_{\bar w} \td g(z_s,w) (\td m_s(w)-m_s(w))\rd^2 w\rd s
+\frac{2-\beta}{\beta N^2}\int_0^t\sum_{i=1}^{N}\frac{\rd s}{(\la_i(s)-z_s)^3}\\
-&\sqrt{\frac{2}{\beta N^3}}\int_0^t\sum_{i=1}^N \frac{{\rm d} B_i(s)}{(\la_i(s)-z_s)^2}.
\end{split}
\end{align}
For the  proof of the mesoscopic central limit theorem, we will show that the first three terms on the righthand side of \eqref{e:intdiffm} are negligible, and the Gaussian fluctuation is from the last term, i.e. the integral with respect to Brownian motion. In the following Proposition, we calculate the quadratic variance of the Brownian integrals.

\begin{proposition}\label{p:var}
Suppose that the  assumptions of Theorem \ref{e:asumpm0} hold.  Fix $u,u'\in\{E+\ri\eta: E_0-r\leq E\leq E_0+r,\eta_*\leq \eta\leq  1\}$. Let $z_t\deq z_t(u)$ and $z_t'\deq z_t(u')$.  If 
\beq
z_t,z'_t\in \{E+\ri \eta: E\in[E_0-r/2, E_0+r/2], M^2/N\ll \eta\ll t\},
\eeq 
and $\Im[z_t]\geq \Im[z_t']$, then
\begin{align}
\label{e:var1}&\frac{1}{N^3}\int_0^t\sum_{i=1}^N \frac{{\rm d}s}{(\la_i(s)-z_s)^4}=\OO\left( \frac{M}{N^3\Im[z_t]^3}+\frac{1}{N^2t\Im[z_t]}\right), \\
\label{e:var2}&\frac{1}{N^3}\int_0^t\sum_{i=1}^N \frac{{\rm d}s}{(\la_i(s)-z_s)^2(\la_i(s)-z_s')^2}=\OO\left( \frac{M}{N^3\Im[z_t]^2\Im[z_t']}+\frac{1}{N^2t\Im[z_t]}\right), \\
\label{e:var3}&\frac{1}{ N^3}\int_0^t\sum_{i=1}^N \frac{{\rm d} s}{(\la_i(s)-\bar{z}_s)^2(\la_i(s)-z_s')^2}=-
\frac{1}{N^2(\bar{z}_t-z_t')^2}
+\OO\left( \frac{M}{N^3\Im[z_t]^2\Im[z_t']}+\frac{1}{N^2t\Im[z_t]}\right).
\end{align}
\end{proposition}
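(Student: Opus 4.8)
The plan is to express each of the three sums as a Brownian-integral quantity that can be evaluated through the local law, following the same scheme used in Proposition \ref{p:error} and Lemma \ref{dpbound}. The starting observation is that
\beq
\frac{1}{N}\sum_{i=1}^N \frac{1}{(\la_i(s)-z_s)^4} = \frac{1}{3!}\,\del_z^3 \td m_s(z_s), \qquad
\frac{1}{N}\sum_{i=1}^N \frac{1}{(\la_i(s)-\bar z_s)^2(\la_i(s)-z_s')^2}
\eeq
can similarly be recognized as a combination of derivatives of $\td m_s$ via partial fractions in the two poles $\bar z_s$ and $z_s'$. Hence \eqref{e:var1} reduces to integrating $N^{-2}\del_z^3 \td m_s(z_s)$ in time, and by Lemma \ref{dpbound} (with $p=3$) we have $\del_z^3 \td m_s(z_s) = \OO(M/(N\Im[z_s]^4) + 1/(t\Im[z_s]^2))$ on the event $\Omega$. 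The two time integrals $\int_0^t \Im[z_s]^{-4}\,\d s$ and $\int_0^t \Im[z_s]^{-2}\,\d s$ are controlled by the estimate \eqref{e:integralbound} of Lemma \ref{l:zsinDs} (rewritten in the form $\int_0^t \d s/\Im[z_s]^p \leq C/\Im[z_t]^{p-1}$ for $p>1$), which gives exactly the two error terms $M/(N^3\Im[z_t]^3)$ and $1/(N^2 t \Im[z_t])$ claimed.

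For \eqref{e:var2}, I would use the partial-fraction identity
\beq
\frac{1}{(\la-z_s)^2(\la-z_s')^2} = \frac{\al}{(\la-z_s)^2}+\frac{\be}{\la-z_s}+\frac{\al'}{(\la-z_s')^2}+\frac{\be'}{\la-z_s'},
\eeq
where $\al=(z_s-z_s')^{-2}$, $\be = -2(z_s-z_s')^{-3}$, etc. Since $\Im[z_s],\Im[z_s']$ and $|z_s-z_s'|$ are all comparable to the $\Im[z_t]$-scale (up to the factor $e^{\OO(t)}$ of \eqref{e:msmtbound}, which is harmless), summing over $i$ turns the right side into $\al \del_z \td m_s(z_s) + \be \td m_s(z_s) + (\text{primed})$, and each is estimated by \eqref{e:diffmm} together with the regularity $\del_z \td m_s = \OO(M/(N\Im^2)+1/(t))$ from Lemma \ref{dpbound}. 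Integrating in $s$ and applying \eqref{e:integralbound} produces the stated bound; the slight asymmetry in the two poles (one has $\Im[z_t]$, the other $\Im[z_t']\leq\Im[z_t]$) is exactly what gives the denominator $\Im[z_t]^2\Im[z_t']$.

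The case \eqref{e:var3} is the one with a genuine main term, and it is the main obstacle. The difference from \eqref{e:var2} is that the two poles $\bar z_s$ and $z_s'$ sit on \emph{opposite} sides of the real axis, so $|\bar z_s - z_s'|\gtrsim \Im[z_s]+\Im[z_s']$ stays bounded below and there is no cancellation at the level of the poles; the relevant object is $N^{-2}(\bar z_t - z_t')^{-2}$ plus genuinely small corrections. My plan is to write
\beq
\frac{1}{N}\sum_i \frac{1}{(\la_i(s)-\bar z_s)^2(\la_i(s)-z_s')^2}
= \frac{\del_{\bar z}\bar{\td m}_s(z_s)\text{-type terms}}{(\bar z_s - z_s')^2} + \cdots,
\eeq
replace $\td m_s$ by $m_s$ up to the local-law error, and then observe that along the characteristics the leading contribution telescopes: using $\del_s\bar z_s = -\overline{m_s(z_s)} - \overline{V'(z_s)}/2$ and $\del_s z_s' = -m_s(z_s') - V'(z_s')/2$, one finds that $\del_s[(\bar z_s - z_s')^{-1}]$ reproduces precisely $\overline{m_s(z_s)}-m_s(z_s')$ over $(\bar z_s - z_s')^2$ up to the $V'$ terms, which are $\OO(1)$ and hence contribute only to the error after dividing by $N^2$ and integrating. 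Thus $N^{-2}\int_0^t \del_s[(\bar z_s - z_s')^{-1}]\,\d s = N^{-2}[(\bar z_t - z_t')^{-1} - (\bar u - u')^{-1}]$, and since $\Im[\bar u - u'] \gtrsim \eta_* \gg$ nothing dangerous — wait, more carefully, $(\bar u - u')^{-1}$ is $\OO(1/\eta_*)$ which is too large; so instead one keeps the full $(\bar z_t - z_t')^{-2}$ main term and shows the \emph{correction} from running the flow is of size $\OO(M/(N^3\Im[z_t]^2\Im[z_t']) + 1/(N^2 t\Im[z_t]))$ by the same derivative-and-integrate argument as above, now applied to $\del_z^2$ of the difference $\td m_s - m_s$ rather than to $\td m_s$ itself. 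The bookkeeping of which pole contributes which power of $\Im[z_t]$ versus $\Im[z_t']$, and checking that all the $V'$-generated and $\del_z V'$-generated pieces are absorbed into the error using Proposition \ref{e:gbound}, is where the real work lies; everything else is a routine repetition of the estimates already established.
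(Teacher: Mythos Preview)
Your treatment of \eqref{e:var1} is correct and matches the paper exactly.

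For \eqref{e:var2} there is a genuine gap. You assert that $|z_s-z_s'|$ is ``comparable to the $\Im[z_t]$-scale'', but nothing in the hypotheses prevents $u$ and $u'$ (and hence $z_s$ and $z_s'$) from being arbitrarily close or even equal. When $|z_s-z_s'|\ll \Im[z_s]$ your partial-fraction coefficients $\alpha=(z_s-z_s')^{-2}$ and $\beta=-2(z_s-z_s')^{-3}$ blow up, and the naive term-by-term estimate fails. There is of course cancellation between $\alpha\,\del_z\td m_s(z_s)+\alpha'\,\del_z\td m_s(z_s')$ and $\beta\,\td m_s(z_s)+\beta'\,\td m_s(z_s')$ in that regime, but extracting it requires a Taylor expansion of $\td m_s$ to second order, which you do not carry out. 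The paper avoids this altogether: it writes the sum as a contour integral $\tfrac{1}{2\pi\ri N^2}\oint_{\cal C}\td m_s(w)(w-z_s)^{-2}(w-z_s')^{-2}\,\rd w$ and splits into two cases. If $|z_s-z_s'|\le \Im[z_s]/3$, a single contour of radius $\Im[z_s]/2$ encloses both poles, and a second-order Taylor expansion of $\td m_s$ about $z_s$ kills the first two terms; if $|z_s-z_s'|>\Im[z_s]/3$, two small disjoint contours are used and the separation $|z_s-z_s'|\gtrsim\Im[z_s]$ is now legitimate. Either route gives the stated bound after integrating in $s$.

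For \eqref{e:var3} your overall strategy is the paper's, but you stumble on the size of the boundary term. You worry that $(\bar u-u')^{-1}=\OO(1/\eta_*)$ is too large, but in fact $|\bar u-u'|\ge \Im[u]+\Im[u']\gtrsim t$: under the assumption $\Im[z_t(u)]\ll t$ together with $\Im[m_0(u)]\asymp 1$, the estimate \eqref{e:ztbound} forces $\Im[u]\asymp t$, not $\eta_*$. Thus $N^{-2}(\bar u-u')^{-2}=\OO(1/(N^2t^2))$, which is dominated by $1/(N^2t\Im[z_t])$ since $\Im[z_t]\ll t$. The paper makes exactly this observation. Also, the telescoping antiderivative is $(\bar z_s-z_s')^{-2}$, not $(\bar z_s-z_s')^{-1}$: one integrates $2\,\del_s(\bar z_s-z_s')/(\bar z_s-z_s')^3$. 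With these two corrections your sketch for \eqref{e:var3} is essentially the paper's argument.
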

\begin{proof}
Since $\Im[m_0(z_0)]=\Im[m_0(u)] \asymp 1$, by \eqref{e:ztbound} and \eqref{e:mtbound}, we have $\Im[z_s] \asymp\Im[z_t]+(t-s)$ and $\Im[z_s'] \asymp \Im[z_t']+(t-s)$. Since $\Im[z_t]\geq \Im[z_t']$, there exists a constant $c$ depending on $V$ and $\fd$, such that uniformly for $0\leq s\leq t$, $\Im[z_s]\geq c\Im[z_s']$.

For \eqref{e:var1}, the lefthand side can be written as the derivative of the Stieltjes transform $\td m_s$ at $z_s$, and so
\begin{align}\begin{split}
\left|\frac{1}{6N^2}\int_0^t\del^3_z\td m_s(z_s)\rd s\right|
\leq& \frac{C}{6N^2}\int_0^t\left(\frac{M}{N\Im[z_s]^4}+\frac{1}{t\Im[z_s]^2}\right)\rd s\\
=&\OO\left( \frac{M}{N^3\Im[z_t]^3}+\frac{1}{N^2t\Im[z_t]}\right),
\end{split}\end{align}
where we used Lemma \ref{dpbound} and \eqref{e:integralbound}.

We write the LHS of \eqref{e:var2}, as a contour integral of $\td m_s$:
\begin{align}\begin{split}\label{e:contourintm}
\frac{1}{N^3}\sum_{i=1}^N \frac{1}{(\la_i(s)-z_s)^2(\la_i(s)-z_s')^2}
=&\frac{1}{2\pi\ri N^2}\oint_{\cal C}\frac{\td m_s(w)}{(w-z_s)^2(w-z_s')^2}\rd w,
\end{split}\end{align}
where if $\Im[z_s]/3\geq |z_s-z'_s|$, then $\cal C$ is a contour centered at $z_s$ with radius $\Im[z_s]/2$. In this case we have $\dist(\cal C, \{z_s, z_s'\})\geq \Im[z_s]/6$.   In the case that $|z_s-z'_s|\geq \Im[z_s]/3$, 
we let $\cal C=\cal C_1\cup \cal C_2$ consist of two contours, where $\cal C_1$ is centered at $z_s$ with radius $\min\{\Im[z_s'],\Im[z_s]\}/6$, and $\cal C_2$ is centered at $z_s'$ with radius $\min\{\Im[z_s'],\Im[z_s]\}/6$. Then in this case we have
$\dist(\cal C_1, z_s')\geq \Im[z_s]/6$ and $\dist(\cal C_2, z_s)\geq \Im[z_s]/6$. In the first case, thanks to Lemma \ref{dpbound} and \rn{2}) in Corollary \ref{c:uniformest}, for $w\in \cal C$ we have
\beq\label{e:tdmsexpand}
\td m_s(w)=\td m_s(z_s)+(w-z_s)\del_z \td m_s(z_s)+(w-z_s)^2\OO\left(\frac{M}{N\Im[z_s]^{3}}+\frac{1}{t\Im[z_s]}\right).
\eeq
Plugging \eqref{e:tdmsexpand} into \eqref{e:contourintm}, we see that the first two terms vanish and
\begin{align}
|\eqref{e:contourintm}|\leq \frac{C}{N^2}\int_{\cal C}\left(\frac{M}{N\Im[z_s]^{5}}+\frac{1}{t\Im[z_s]^3}\right)\rd w
=\OO\left(\frac{M}{N^3\Im[z_s]^{4}}+\frac{1}{N^2t\Im[z_s]^2}\right),\label{e:contourintm1} 
\end{align}
where we used that $|\cal C|\asymp \Im[z_s]$. In the second case, \eqref{e:tdmsexpand} holds on $\cal C_1$. Similarly, for  $w\in \cal C_2$ we have
\beq\label{e:tdmsexpand2}
\td m_s(w)=\td m_s(z_s')+(w-z_s')\del_z \td m_s(z_s')+(w-z_s')^2\OO\left(\frac{M}{N\Im[z_s']^{3}}+\frac{1}{t\Im[z_s']}\right).
\eeq
It follows by plugging \eqref{e:tdmsexpand} and \eqref{e:tdmsexpand2} into \eqref{e:contourintm}, that we can bound \eqref{e:contourintm} by
\begin{align}\begin{split}\label{e:contourintm2}
& \frac{C}{N^2}\left(\int_{\cal C_1}\left(\frac{M}{N\Im[z_s]^{5}}+\frac{1}{t\Im[z_s]^3}\right)\rd w+\int_{\cal C_2}\left(\frac{M}{N\Im[z_s]^{2}\Im[z_s']^{3}}+\frac{1}{t\Im[z_s]^2\Im[z_s']}\right)\rd w\right)\\
=&\OO\left(\frac{M}{N^3\Im[z_s]^{2}\Im[z_s']^2}+\frac{1}{N^2t\Im[z_s]^2}\right),
\end{split}\end{align}
where we used $\Im[z_s]\geq c\Im[z_s']$ and $|\cal C_1|, |\cal C_2|=\OO(\Im[z_s'])$.  The estimate of the LHS of
\eqref{e:var2} follows by combining \eqref{e:contourintm1} and \eqref{e:contourintm2},
\begin{align}\begin{split}
|\eqref{e:var2}|\leq &\frac{C}{N^2}\int_0^t \frac{M}{N^3\Im[z_s]^{2}\Im[z_s']^2}+\frac{1}{N^2t\Im[z_s]^2}\rd s\\
= &\OO\left(\frac{M}{N^3\Im[z_t]^2}\int_0^t \frac{\rd s}{\Im[z_s']^2}+\frac{1}{N^2t\Im[z_t]}\right)\\
=&\OO\left(\frac{M}{N^3\Im[z_t]^2\Im[z_t']}+\frac{1}{N^2t\Im[z_t]}\right),
\end{split}\end{align}
where we used that $\Im[z_s]\geq c\Im[z_t]$ in the second line, and \eqref{e:integralbound} for the last line.
%

Finally, for \eqref{e:var3},
\beq\label{e:mainterm}
\frac{1}{N}\sum_{i=1}^N \frac{1}{(\la_i(s)-\bar z_s)^2(\la_i(s)-z_s')^2}
=\frac{2(\overline{-\td m_s(z_s)}+\td m_s(z_s'))}{(\bar z_s-z_s')^3}+\frac{\overline{\del_{z} \td m_s( z_s)}+\del_z \td m_s(z_s')}{(\bar z_s-z_s')^2}.
\eeq
Note that $|\bar z_s-z_s'|\geq \Im[z_s]+\Im[z_s']\asymp \Im[z_s]$.   For the second term in \eqref{e:mainterm}, we have by \eqref{dptdms},
\begin{align}\begin{split}
\left|\frac{1}{N^2}\int_0^t\frac{\overline{\del_{z} \td m_s( z_s)}+\del_z \td m_s(z_s')}{(\bar z_s-z_s')^2}\right|
\leq& \frac{C}{N^2}\int_0^t\frac{1}{\Im[z_s]^2} \left(\frac{M}{N\Im[z_s']^2}+\frac{1}{t}\right)\rd s\\
=&\OO\left(\frac{M}{N^3\Im[z_t]^2\Im[z_t']}+\frac{1}{N^2t\Im[z_t]}\right).
\end{split}\end{align}
For the first term in \eqref{e:mainterm}, we recall the definition of the vector flow $z_s(u)$ as in \eqref{def:zt}.  Since $\|V'(z)\|_{C^1}=\OO(1)$, we have
\beq
\overline{-\td m_s(z_s)}+\td m_s(z_s')=\del_s (\bar z_s-z_s') +\OO(|\bar z_s-z_s'|).
\eeq
Therefore,
\begin{align}
\begin{split}
\frac{2}{N^2}\int_{0}^{t}\frac{(\overline{-\td m_s(z_s)}+\td m_s(z_s'))}{(\bar z_s-z_s')^3}\rd s
=&
\frac{2}{N^2}\int_{0}^{t}\frac{\del_s (\bar z_s-z_s') }{(\bar z_s-z_s')^3}\rd s
+\OO\left( \frac{1}{N^2}\int_0^t\frac{\rd s}{\Im[z_s]^2}\right)\\
=&-\frac{1}{N^2(\bar z_t-z_t')^2}+\frac{1}{N^2(\bar u-u')^2}+\OO\left(\frac{1}{N^2\Im[z_t]}\right)\\
=&-\frac{1}{N^2(\bar z_t-z_t')^2}+\OO\left(\frac{1}{N^2t^2}+\frac{1}{N^2\Im[z_t]}\right),
\end{split}
\end{align}
where we used $|\bar u-u'|\geq \Im[u]+\Im[u']\geq ct$. This finishes the proof of Proposition  \ref{p:var}.
\end{proof}

\begin{proof}[Proof of Theorem \ref{t:mesoCLT}]
Let the event $\Omega$ be as above.  Thanks to the estimates Theorem \ref{t:rigidity} and Lemma \ref{dpbound} which hold on $\Omega$, we can bound the first term on the RHS of \eqref{e:intdiffm}  by
\begin{align}\begin{split}
\left|\int_0^t\left(\td m_s(z_s)-m_s(z_s)\right)\del_z \left(\td m_s(z_s)+\frac{V'(z_s)}{2}\right)\rd s\right|
\leq& C\int_0^t\frac{M}{N\Im[z_s]}\left(\frac{M}{N\Im[z_s]^2}+\frac{1}{t}\right)\rd s\\
=& \OO\left(\frac{M^2}{(N\Im[z_t])^2}+\frac{M\log N}{Nt}\right),
\end{split}\end{align}
where we used \eqref{e:integralbound}.



For the second term on the righthand side of \eqref{e:intdiffm}, by Proposition \ref{prop:HFbound} we have on the event $\Omega$
\beq
 \left|\frac{1}{\pi}\int_0^t\int_{\bC} \del_{\bar w} \td g(z_s,w) (\td m_s(w)-m_s(w))\rd^2 w\rd t\right|\leq \frac{CtM(\log N)^{2}}{N}.
\eeq

We can rewrite the third term on the righthand side of \eqref{e:intdiffm} as
\beq
\frac{2-\beta}{\beta N^2}\int_0^t\sum_{i=1}^{N}\frac{\rd s}{(\la_i(s)-z_s)^3}
=\frac{2-\beta}{2\beta N}\int_0^t\del_z^2 \td m_s(z_s)\rd s.
\eeq
Thanks to Lemma \ref{dpbound}, and \eqref{e:integralbound} we have
\begin{align}\begin{split}
\left|\int_0^t\del_z^2 \td m_s(z_s)\rd s\right|
\leq& C\int_0^t\left(\frac{1}{N\Im[z_s]^3}
+ \frac{1}{t\Im[z_s]}\right)\rd s=\OO\left(\frac{1}{N (\Im[z_t])^2}+\frac{\log N}{t}\right).
\end{split}
\end{align}
It follows that 
\beq
\left|\frac{2-\beta}{\beta N^2}\int_0^t\sum_{i=1}^{N}\frac{\rd s}{(\la_i(s)-z_s)^3}\right|
=\OO\left(\frac{1}{(N\Im[z_t])^2}+\frac{\log N}{Nt}\right).
\eeq

%
%
%

By combining the above estimates we see that on the event $\Omega$, we have
\beq\label{e:tmmtdiff}
\td m_t(z_t)-m_t(z_t)=\OO\left(\frac{M^2}{(N\Im[z_t])^2}+\frac{M\log N}{Nt}\right)+\sqrt{\frac{2}{\beta N^3}}\int_0^t\sum_{i=1}^N \frac{{\rm d} B_i(t)}{(\la_i(s)-z_s)^2}.
\eeq

In the following we show that the Brownian integrals are asymptotically jointly Gaussian. We fix some $u_j\in\{E+\ri\eta: E_0-r\leq E\leq E_0+r,\eta_*\leq \eta\leq  1\}$, $j=1,2,\cdots, k$ such that \beq
z_t(u_j)\in \{E+\ri \eta: E\in[E_0-r/2, E_0+r/2], M^2/N\ll \eta\ll t\},\quad j=1,2,\cdots, k.
\eeq 
For $1\leq j\leq k$. Let
\beq
X_j(t)= \Im[z_t(u_j)]\sqrt{\frac{2}{\beta N}}\int_0^t\sum_{i=1}^N \frac{{\rm d} B_i(t)}{(\la_i(s)-z_s(u_j))^2},\quad j=1,2,\cdots, k.
\eeq
We compute their joint characteristic function,
\beq\label{e:cfunc}
\bE\left[\exp\left\{\ri\sum_{j=1}^k a_j\Re[X_j(t)]+b_j\Im[X_j(t)]\right\}\right]
\eeq
Since $\sum_{j=1}^k a_j\Re[X_j(t)]+b_j\Im[X_j(t)]$ is  a martingale, the following is also a martingale
\beq
\exp\left\{\ri \sum_{j=1}^k a_j\Re[X_j(t)]+b_j\Im[X_j(t)]\}+\frac{1}{2}\left\langle \sum_{j=1}^k a_j\Re[X_j(t)]+b_j\Im[X_j(t)]\right\rangle\right\}
\eeq
In particular, its expectation is one. By Proposition \ref{p:var}, on the event $\Omega$ (as defined in the proof of Proposition \ref{p:error}), the quadratic variation is given by 
\begin{align}\begin{split}
&\frac{1}{2}\left\langle \sum_{j=1}^k a_j\Re[X_j(t)]+b_j\Im[X_j(t)]\right\rangle\\
=&-\sum_{1\leq j,\ell\leq k}\Re\left[\frac{(a_j-\ri b_j)(a_\ell+\ri b_\ell)\Im[z_t(u_j)]\Im[z_t(u_\ell)]}{2\beta(z_t(u_j)-\overline{z_t(u_\ell)})^2}\right]\\
+&\OO\left( \frac{M}{N\min_j\{z_t(u_j)\}}+\frac{\max_j\{\Im[z_t(u_j)]\}}{t}\right).
\end{split}\end{align}
Therefore,
\begin{align}\begin{split}
\eqref{e:cfunc}
=&\exp\left\{\sum_{1\leq j,\ell\leq k}\Re\left[\frac{(a_j-\ri b_j)(a_\ell+\ri b_\ell)\Im[z_t(u_j)]\Im[z_t(u_\ell)]}{2\beta(z_t(u_j)-\overline{z_t(u_\ell)})^2}\right]\right\} \\+&\OO\left( \frac{M}{N\min_j\{\Im[z_t(u_j)]\}}+\frac{\max_j\{\Im[z_t(u_j)]\}}{t}\right).
\end{split}\end{align}
Since by \eqref{e:tmmtdiff}, 
\begin{align*}
\Gamma_t(z_t(u_j))=X_j(t)+\OO\left(\frac{M^2}{N\Im[z_t]}+\frac{M\log N\Im[z_t(u_j)]}{t}\right),
\end{align*}
and so \eqref{e:variance} follows. This finishes the proof of Theorem \ref{t:mesoCLT}.
\end{proof}

\begin{proof}[Proof of Corollary \ref{c:mesoCLT}]
The corollary follows from Theorem \ref{t:mesoCLT} and the rigidity estimate \ref{t:rigidity} by the same argument as in \cite[Theorem 1.2]{mesoCLT1}.
\end{proof}

\bibliography{References.bib}{}

\begin{thebibliography}{10}

\bibitem{MR2760897}
Greg~W. Anderson, Alice Guionnet, and Ofer Zeitouni.
\newblock {\em An introduction to random matrices}, volume 118 of {\em
  Cambridge Studies in Advanced Mathematics}.
\newblock Cambridge University Press, Cambridge, 2010.

\bibitem{Roland2015}
Roland Bauerschmidt, Bourgade Paul, Miika Nikula, and Yau Horng-Tzer.
\newblock Local density for two-dimensional one-component plasma.
\newblock {\em preprint, arXiv: 1510.02074}.

\bibitem{mesoCLT2}
Florent Bekerman and Asad Lodhia.
\newblock Mesoscopic central limit theorem for general $\beta$-ensembles.
\newblock {\em preprint, arXiv: 1605.05206}, 2016.

\bibitem{extremegap}
Paul Bourgade.
\newblock Extreme gaps between eigenvalues of wigner matrices.
\newblock {\em in preparation}, 2016.

\bibitem{MR2905803}
Paul Bourgade, L{\'a}szl{\'o} Erd{\H{o}}s, and Horng-Tzer Yau.
\newblock Bulk universality of general {$\beta$}-ensembles with non-convex
  potential.
\newblock {\em J. Math. Phys.}, 53(9):095221, 19, 2012.

\bibitem{MR3253704}
Paul Bourgade, L{\'a}szl{\'o} Erd{\"o}s, and Horng-Tzer Yau.
\newblock Edge universality of beta ensembles.
\newblock {\em Comm. Math. Phys.}, 332(1):261--353, 2014.

\bibitem{MR3192527}
Paul Bourgade, L{\'a}szl{\'o} Erd{\H{o}}s, and Horng-Tzer Yau.
\newblock Universality of general {$\beta$}-ensembles.
\newblock {\em Duke Math. J.}, 163(6):1127--1190, 2014.

\bibitem{MR3541852}
Paul Bourgade, L{\'a}szl{'o} Erd{\H o}s, Horng-Tzer Yau, and Jun Yin.
\newblock Fixed energy universality for generalized {W}igner matrices.
\newblock {\em Comm. Pure Appl. Math.}, 69(10):1815--1881, 2016.

\bibitem{MR1678012}
A.~Boutet~de Monvel and A.~Khorunzhy.
\newblock Asymptotic distribution of smoothed eigenvalue density. {I}.
  {G}aussian random matrices.
\newblock {\em Random Oper. Stochastic Equations}, 7(1):1--22, 1999.

\bibitem{MR1689027}
A.~Boutet~de Monvel and A.~Khorunzhy.
\newblock Asymptotic distribution of smoothed eigenvalue density. {II}.
  {W}igner random matrices.
\newblock {\em Random Oper. Stochastic Equations}, 7(2):149--168, 1999.

\bibitem{MR1176727}
Terence Chan.
\newblock The {W}igner semi-circle law and eigenvalues of matrix-valued
  diffusions.
\newblock {\em Probab. Theory Related Fields}, 93(2):249--272, 1992.

\bibitem{mesoCLTDBM}
Maurice Duits and Kurt Johansson.
\newblock On mesoscopic equilibrium for linear statistics in dyson's brownian
  motion.
\newblock {\em to appear in Memoirs of the AMS}, 2016.

\bibitem{MR0148397}
Freeman~J. Dyson.
\newblock A {B}rownian-motion model for the eigenvalues of a random matrix.
\newblock {\em J. Mathematical Phys.}, 3:1191--1198, 1962.

\bibitem{MR0143556}
Freeman~J. Dyson.
\newblock Statistical theory of the energy levels of complex systems. {I}.
\newblock {\em J. Mathematical Phys.}, 3:140--156, 1962.

\bibitem{MR0143557}
Freeman~J. Dyson.
\newblock Statistical theory of the energy levels of complex systems. {II}.
\newblock {\em J. Mathematical Phys.}, 3:157--165, 1962.

\bibitem{MR0143558}
Freeman~J. Dyson.
\newblock Statistical theory of the energy levels of complex systems. {III}.
\newblock {\em J. Mathematical Phys.}, 3:166--175, 1962.

\bibitem{MR3098073}
L{\'a}szl{\'o} Erd{\H{o}}s, Antti Knowles, Horng-Tzer Yau, and Jun Yin.
\newblock Spectral statistics of {E}rd{\H o}s-{R}\'enyi graphs {I}: {L}ocal
  semicircle law.
\newblock {\em Ann. Probab.}, 41(3B):2279--2375, 2013.

\bibitem{MR2662426}
L{\'a}szl{\'o} Erd{\H{o}}s, Sandrine P{\'e}ch{\'e}, Jos{\'e}~A. Ramirez,
  Benjamin Schlein, and Horng-Tzer Yau.
\newblock Bulk universality for {W}igner matrices.
\newblock {\em Comm. Pure Appl. Math.}, 63(7):895--925, 2010.

\bibitem{MR2661171}
L{\'a}szl{\'o} Erd{\H{o}}s, Jos{\'e} Ramirez, Benjamin Schlein, Terence Tao,
  Van Vu, and Horng-Tzer Yau.
\newblock Bulk universality for {W}igner {H}ermitian matrices with
  subexponential decay.
\newblock {\em Math. Res. Lett.}, 17(4):667--674, 2010.

\bibitem{MR2639734}
L{\'a}szl{\'o} Erd{\H{o}}s, Jos{\'e}~A. Ramirez, Benjamin Schlein, and
  Horng-Tzer Yau.
\newblock Universality of sine-kernel for {W}igner matrices with a small
  {G}aussian perturbation.
\newblock {\em Electron. J. Probab.}, 15:no. 18, 526--603, 2010.

\bibitem{MR2481753}
L{\'a}szl{\'o} Erd{\H{o}}s, Benjamin Schlein, and Horng-Tzer Yau.
\newblock Local semicircle law and complete delocalization for {W}igner random
  matrices.
\newblock {\em Comm. Math. Phys.}, 287(2):641--655, 2009.

\bibitem{MR2537522}
L{\'a}szl{\'o} Erd{\H{o}}s, Benjamin Schlein, and Horng-Tzer Yau.
\newblock Semicircle law on short scales and delocalization of eigenvectors for
  {W}igner random matrices.
\newblock {\em Ann. Probab.}, 37(3):815--852, 2009.

\bibitem{MR2810797}
L{\'a}szl{\'o} Erd{\H{o}}s, Benjamin Schlein, and Horng-Tzer Yau.
\newblock Universality of random matrices and local relaxation flow.
\newblock {\em Invent. Math.}, 185(1):75--119, 2011.

\bibitem{kevin3}
L{\'a}szl{\'o} Erd{\H{o}}s and Kevin Schnelli.
\newblock Universality for random matrix flows with time-dependent density.
\newblock {\em to appear in Ann. Inst. Henri Poincaré Probab. Stat.}, 2016.

\bibitem{MR3372074}
L{\'a}szl{\H o} Erd{\H{o}}s and Horng-Tzer Yau.
\newblock Gap universality of generalized {W}igner and {$\beta$}-ensembles.
\newblock {\em J. Eur. Math. Soc. (JEMS)}, 17(8):1927--2036, 2015.

\bibitem{MR2981427}
L{\'a}szl{\'o} Erd{\H{o}}s, Horng-Tzer Yau, and Jun Yin.
\newblock Bulk universality for generalized {W}igner matrices.
\newblock {\em Probab. Theory Related Fields}, 154(1-2):341--407, 2012.

\bibitem{MR2871147}
L{\'a}szl{\'o} Erd{\H{o}}s, Horng-Tzer Yau, and Jun Yin.
\newblock Rigidity of eigenvalues of generalized {W}igner matrices.
\newblock {\em Adv. Math.}, 229(3):1435--1515, 2012.

\bibitem{mesoCLT3}
Yukun He and Antti Knowles.
\newblock Mesoscopic eigenvalue statistics of wigner matrices.
\newblock {\em to appear in Ann. Appl. Prob}, 2016.

\bibitem{mesoCLT4}
Gaultier Lambert.
\newblock Mesoscopic fluctuations for unitary invariant ensembles.
\newblock {\em preprint, arXiv: 1510.03641}, 2015.

\bibitem{fix2}
Benjamin Landon, Philippe Sosoe, and Horng-Tzer Yau.
\newblock Fixed energy universality of dyson brownian motion.
\newblock {\em preprint, arXiv: 1609.09011}, 2016.

\bibitem{ly}
Benjamin Landon and Horng-Tzer Yau.
\newblock Convergence of local statistics of {D}yson {B}rownian motion.
\newblock {\em to appear in Comm. Math. Phys.}, 2016.

\bibitem{Leble2015}
Thomas Lebl{\'e}.
\newblock Local microscopic behavior for 2d coulomb gases.
\newblock {\em to appear in Probab. Theory Related Fields}, 2015.

\bibitem{kevin1}
Ji~Oon Lee and Kevin Schnelli.
\newblock Local deformed semicircle law and complete delocalization for wigner
  matrices with random potential.
\newblock {\em Journal of Mathematical Physics}, 54(10):103504, 2013.

\bibitem{MR3502606}
Ji~Oon Lee, Kevin Schnelli, Ben Stetler, and Horng-Tzer Yau.
\newblock Bulk universality for deformed {W}igner matrices.
\newblock {\em Ann. Probab.}, 44(3):2349--2425, 2016.

\bibitem{GDBM1}
Songzi Li, Xiang-Dong Li, and Yong-Xiao Xie.
\newblock Generalized dyson brownian motion, mckean-vlasov equation and
  eigenvalues of random matrices.
\newblock {\em preprint, arXiv:1303.1240}, 2013.

\bibitem{GDBM2}
Songzi Li, Xiang-Dong Li, and Yong-Xiao Xie.
\newblock On the law of large numbers for the empirical measure process of
  generalized dyson brownian motion.
\newblock {\em preprint, arXiv:1407.7234}, 2015.

\bibitem{multicutRig}
Yiting Li.
\newblock Rigidity of eigenvalues for $\beta$ ensemble in multi-cut regime.
\newblock {\em preprint, arXiv: 1611.06603}, 2016.

\bibitem{mesoCLT1}
Asad Lodhia and Nicholas~J. Simm.
\newblock Mesoscopic linear statistics of wigner matrices.
\newblock {\em preprint, arXiv: 1503.03533}, 2015.

\bibitem{deviation}
Myl{\`e}ne Maida and {\'E}douard Maurel-Segala.
\newblock Free transport-entropy inequalities for non-convex potentials and
  application to concentration for random matrices.
\newblock {\em Probability Theory and Related Fields}, 159(1-2):329--356, 2014.

\bibitem{MR1217451}
L.~C.~G. Rogers and Z.~Shi.
\newblock Interacting {B}rownian particles and the {W}igner law.
\newblock {\em Probab. Theory Related Fields}, 95(4):555--570, 1993.

\bibitem{tv1}
Terence Tao and Van Vu.
\newblock Random matrices: universality of local eigenvalue statistics.
\newblock {\em Acta mathematica}, 206(1):127--204, 2011.

\bibitem{MR3109424}
Terence Tao and Van Vu.
\newblock Random matrices: sharp concentration of eigenvalues.
\newblock {\em Random Matrices Theory Appl.}, 2(3):1350007, 31, 2013.

\bibitem{MR0083848}
Eugene~P. Wigner.
\newblock Characteristic vectors of bordered matrices with infinite dimensions.
  {II}.
\newblock {\em Ann. of Math. (2)}, 65:203--207, 1957.

\end{thebibliography}
\bibliographystyle{plain}

\end{document}